\title[Basic slc-trivial fibrations]
{Fundamental properties of basic slc-trivial fibrations}
\author{Osamu Fujino}
\date{2020/3/10, version 0.30}
\subjclass[2010]{Primary 14N30; Secondary 14D07, 32G20, 14E30}
\keywords{canonical bundle formula, quasi-log canonical pair, 
semipositivity theorem, variation of mixed Hodge structure, subadjunction}
\address{Department of Mathematics, Graduate School of Science, 
Osaka University, Toyonaka, Osaka 560-0043, Japan}
\email{fujino@math.sci.osaka-u.ac.jp}
\DeclareMathOperator{\Nqklt}{Nqklt}
\DeclareMathOperator{\Gr}{Gr}
\DeclareMathOperator{\mld}{mld}
\DeclareMathOperator{\nqklt}{Nqklt}
\DeclareMathOperator{\Spec}{Spec}
\DeclareMathOperator{\Supp}{Supp}
\DeclareMathOperator{\mult}{mult}
\DeclareMathOperator{\rank}{rank}
\DeclareMathOperator{\codim}{codim}
\newtheorem{thm}{Theorem}[section]
\newtheorem{lem}[thm]{Lemma}
\newtheorem{prop}[thm]{Proposition}
\newtheorem{conj}[thm]{Conjecture}
\newtheorem{cor}[thm]{Corollary}
\newtheorem*{claim}{Claim}
\theoremstyle{definition}
\newtheorem{ex}[thm]{Example}
\newtheorem{defn}[thm]{Definition}
\newtheorem{rem}[thm]{Remark}
\newtheorem*{ack}{Acknowledgments}  
\newtheorem*{conventions}{Conventions}        
\newtheorem{say}[thm]{}
\newtheorem{step}{Step}
\newcommand{\Div}[0]{\operatorname{Div\!}}
\newcommand{\pd}{\Delta\!^{\ast}}
\newcommand{\rec}{_{\rm rec}}
\begin{document}

\begin{abstract}
We introduce the notion of basic slc-trivial fibrations. 
It is a generalization of that of Ambro's lc-trivial fibrations. 
Then we study fundamental properties of basic slc-trivial 
fibrations by using the theory of variations of 
mixed Hodge structure on cohomology with compact support. 
More precisely, we prove that 
the moduli part of a basic slc-trivial fibration is b-potentially 
nef. 
Note that the notion of basic slc-trivial fibrations is closely related to 
that of normal irreducible quasi-log canonical pairs. 
So the results obtained in this paper will play an important role in 
the theory of quasi-log schemes. 
Here we give a structure theorem for normal irreducible quasi-log canonical 
pairs as an application of the main theorem. 
This result makes the theory of quasi-log schemes more powerful and more 
flexible. 
\end{abstract}

\maketitle 
\tableofcontents

\section{Introduction}\label{c-sec1}

Let us introduce {\em{basic slc-trivial fibrations}} 
$f:(X, B)\to Y$. 
They consist of a projective surjective morphism 
$f:X\to Y$ from a simple normal crossing variety $X$ to a 
normal 
irreducible variety $Y$ such that $(X, B)$ is a simple 
normal crossing pair and that $K_X+B$ is 
$\mathbb Q$-linearly trivial over $Y$. 
More precisely, we assume:  
\begin{itemize}
\item[(1)] $Y$ is a normal irreducible variety, 
\item[(2)] every stratum of $X$ is 
dominant onto $Y$ and $f_*\mathcal O_X\simeq 
\mathcal O_Y$, where $f:X\to Y$ is a projective surjective morphism, 
\item[(3)] $B$ is a $\mathbb Q$-divisor on $X$ such that 
$(X, B)$ is a simple normal crossing pair and that 
$B=B^{\leq 1}$ holds over the generic point of $Y$, 
\item[(4)] there exists a $\mathbb Q$-Cartier $\mathbb Q$-divisor 
$D$ on $Y$ such that 
$
K_X+B\sim _{\mathbb Q}f^*D$, 
and 
\item[(5)] $\rank f_*\mathcal O_X(\lceil -(B^{<1})\rceil)=1$. 
\end{itemize}
We note that $X$ is not necessarily irreducible in the above setup. 
It may be a {\em{reducible}} simple normal crossing variety. 
Of course, we are mainly interested in the case where $X$ is reducible. 
The notion of basic slc-trivial fibrations is a natural generalization of that 
of {\em{lc-trivial fibrations}} (see \cite{ambro3} and 
\cite{fujino-gongyo}) and will suit the theory of quasi-log schemes 
very well. 

\medskip 

In the above setup, 
let $\sigma:Y'\to Y$ be a birational morphism from a normal irreducible 
variety $Y'$. Then we can construct the following commutative 
diagram of basic slc-trivial fibrations: 
$$
\xymatrix{
   (X', B_{X'}) \ar[r]^{\mu} \ar[d]_{f'} & (X, B)\ar[d]^{f} \\
   Y' \ar[r]_{\sigma} & Y,  
} 
$$
where $B_{X'}$ is defined by $K_{X'}+B_{X'}=\mu^*(K_X+B)$ and 
$f': (X', B_{X'})\to Y'$ is nothing but the base change of $f:(X, B)\to Y$ by $\sigma: 
Y'\to Y$ on a nonempty Zariski open set of $Y'$. 
We call $f': (X', B_{X'})\to Y'$ an {\em{induced 
basic slc-trivial fibration}} of $f:(X, B)\to Y$ by $\sigma: Y'\to Y$. 
As for lc-trivial fibrations, we can define a {\em{discriminant 
$\mathbb Q$-b-divisor}} $\mathbf B$ and a {\em{moduli $\mathbb Q$-b-divisor}} 
$\mathbf M$ on $Y$ associated to $f:(X, B)\to Y$ 
(see \ref{c-say4.5}). 

\medskip

Before we state the main theorem of this paper, 
we have to introduce the notion of 
{\em{potentially nef}} $\mathbb Q$-divisors. 

\begin{defn}[Potentially nef divisors, 
see Definition \ref{c-def2.5}]\label{c-def1.1}
Let $X$ be a normal irreducible variety 
and let $D$ be a divisor on $X$. 
If there exist a normal complete variety $\overline X$ which 
contains $X$ as a dense Zariski open set 
and a nef divisor $\overline D$ on $\overline X$ 
such that $D=\overline D|_X$, then 
$D$ is called a {\em{potentially nef}} divisor on $X$. 
A finite $\mathbb R_{>0}$-linear (resp.~$\mathbb Q_{>0}$-linear) 
combination of potentially nef divisors is called 
a {\em{potentially nef}} $\mathbb R$-divisor (resp.~$\mathbb Q$-divisor). 
\end{defn}

Let us state the main theorem of this paper, 
which is a generalization of \cite[Theorem 
0.2]{ambro3} (see also \cite[Theorem 3.6]{fujino-gongyo}). 

\begin{thm}[Main Theorem]\label{c-thm1.2}
Let $f:(X, B)\to Y$ be a basic slc-trivial fibration and 
let $\mathbf B$ and $\mathbf M$ be the induced 
discriminant and moduli $\mathbb Q$-b-divisors of $Y$ respectively. 
Then we have the following properties: 
\begin{itemize}
\item[(i)] $\mathbf K+\mathbf B$ is $\mathbb Q$-b-Cartier, and 
\item[(ii)] $\mathbf M$ is b-potentially nef, that is,  
there exists a proper birational morphism $\sigma:Y'\to Y$ 
from a normal variety $Y'$ such that 
$\mathbf M_{Y'}$ is a potentially nef $\mathbb Q$-divisor on $Y'$ and 
that $\mathbf M=\overline{\mathbf M_{Y'}}$. 
\end{itemize}
\end{thm}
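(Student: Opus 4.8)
The plan is to reduce both statements to a single semipositivity assertion on a sufficiently high birational model of $Y$, and then to establish that assertion by means of the theory of variations of mixed Hodge structure on cohomology with compact support. By the construction of $\mathbf B$ and $\mathbf M$ recalled in \ref{c-say4.5}, for every proper birational morphism $\sigma:Y'\to Y$ from a normal variety $Y'$ one has, by definition, the equality of $\mathbb Q$-divisors $\mathbf K_{Y'}+\mathbf B_{Y'}+\mathbf M_{Y'}=\sigma^{*}D$; hence, as $\mathbb Q$-b-divisors, $\mathbf K+\mathbf B=\overline D-\mathbf M$, where $\overline D$ is the b-divisor whose trace on every model is the pullback of $D$, and which is $\mathbb Q$-b-Cartier because $D$ is $\mathbb Q$-Cartier. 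Consequently, if we exhibit a \emph{smooth} model $\sigma:Y'\to Y$ over which $\mathbf M$ descends, i.e.\ $\mathbf M=\overline{\mathbf M_{Y'}}$ with $\mathbf M_{Y'}$ a potentially nef $\mathbb Q$-divisor, then (ii) holds and, at the same time, $\mathbf K+\mathbf B=\overline{\sigma^{*}D-\mathbf M_{Y'}}$ is $\mathbb Q$-b-Cartier, which is (i). So the whole theorem is contained in the existence of such a model.

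I would first pass to a good model. Replacing $f:(X,B)\to Y$ by an induced basic slc-trivial fibration over a resolution of $Y$, and then performing a base change that is semistable in codimension one together with a suitable log resolution of the total space, I may assume that $Y$ is smooth and quasi-projective and admits a smooth projective compactification $\overline Y$; that there is a reduced simple normal crossing divisor $\Sigma$ on $\overline Y$ containing both $\overline Y\setminus Y$ and the support of the discriminant; and that over $Y\setminus\Sigma$ the pair $(X,B)$ is as well behaved as possible, with all local monodromies around the components of $\Sigma$ unipotent. Over this model $\mathbf M_Y$ is the genuine moduli $\mathbb Q$-divisor of the fibration, so it is enough to show that $\mathbf M_Y$ extends to a nef $\mathbb Q$-divisor on $\overline Y$.

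The core of the proof is this semipositivity statement. Let $b>0$ be the smallest integer with $b(K_X+B)\sim bf^{*}D$; by condition (5) a rational function realizing this linear equivalence is unique up to a nonzero constant, so one obtains a canonical degree $b$ cover of $X$ and an induced fibration whose fibrewise cohomology with compact support, relative to the reduced part of the boundary, carries over $Y\setminus\Sigma$ an admissible graded-polarizable variation of mixed Hodge structure. By construction $b\,\mathbf M_Y$ is the first Chern class of the canonical extension to $\overline Y$ of the bottom piece of the Hodge filtration of this variation (essentially a relative dualizing sheaf, and a line bundle by condition (5)), and the required nefness of that extension on the projective variety $\overline Y$ is exactly the semipositivity theorem for variations of mixed Hodge structure on cohomology with compact support in the unipotent, simple normal crossing situation arranged above. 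This gives that $\mathbf M_Y$ is potentially nef on $\overline Y$; and since the construction is compatible with further birational base change, the b-divisor $\mathbf M$ is already computed by this model, so $\mathbf M=\overline{\mathbf M_{Y'}}$. Combined with the reduction in the first paragraph, this yields both (i) and (ii).

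I expect the decisive obstacle to be the Hodge-theoretic input. Because $X$ may be reducible, the relevant local system lives in cohomology \emph{with compact support} and genuinely underlies a variation of mixed — not pure — Hodge structure, so one must control its weight and relative monodromy-weight filtrations near $\Sigma$, verify graded polarizability and admissibility, and ensure that the semipositivity of the canonical extensions of the Hodge bundles persists in this mixed setting. A further delicate point, needed to legitimize the descent used above, is the precise identification — stable under blow-ups of the base — of this Hodge-theoretic extension of the moduli line bundle with the valuation-theoretic definition of $\mathbf M$.
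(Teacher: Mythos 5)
Your proposal follows the paper's own strategy: reduce (i) to the descent of $\mathbf M$ via $\mathbf K+\mathbf B=\overline D-\mathbf M$, pass to a good model with unipotent local monodromies by Kawamata covering tricks, realize the moduli part as the first Chern class of an eigen--line bundle inside the canonical extension of the bottom Hodge piece of the compact-support variation of mixed Hodge structure attached to the $b$-fold cyclic cover, and invoke the Fujino--Fujisawa semipositivity theorem together with the base-change compatibility of this identification (the paper's Proposition \ref{d-prop6.3} and Theorem \ref{c-thm8.1}). The only slip is the factor $b$: the relevant eigensheaf computes $\mathcal O_Y(M_Y)$ itself, not $bM_Y$, which is harmless for the nefness conclusion.
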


We note that $\mathbf K$ in Theorem \ref{c-thm1.2} 
is the {\em{canonical b-divisor}} of $Y$. 
For the precise definition of {\em{$\mathbb Q$-b-divisors}} and 
{\em{b-potentially nef divisors}}, see Definition \ref{c-def2.12} below. 

\medskip

Theorem \ref{c-thm1.2} can be restated as follows 
without using b-divisors. 

\begin{thm}\label{c-thm1.3}
Let $f:(X, B)\to Y$ be a basic slc-trivial fibration. 
Then there is a proper birational morphism $\sigma:Y'\to Y$ from a 
normal variety $Y'$ such that 
\begin{itemize}
\item[(i)] $K_{Y'}+B_{Y'}$ is $\mathbb Q$-Cartier 
and $\nu^*(K_{Y'}+B_{Y'})=K_{Y''}+B_{Y''}$ for every 
proper birational morphism $\nu:Y''\to Y'$ from a normal 
variety $Y''$, and  
\item[(ii)] $M_{Y'}$ is a $\mathbb Q$-Cartier $\mathbb Q$-divisor 
on $Y'$ that is potentially nef and 
$\nu^*M_{Y'}=M_{Y''}$ for every 
proper birational morphism $\nu:Y''\to Y'$ from a normal 
variety $Y''$. 
\end{itemize}
\end{thm}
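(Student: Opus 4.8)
The plan is to deduce Theorem~\ref{c-thm1.3} from the Main Theorem (Theorem~\ref{c-thm1.2}): once the latter is in hand, the statement is essentially an unwinding of the definitions of $\mathbb Q$-b-Cartier and b-potentially nef b-divisors, together with one elementary fact about compactifications, and no further geometry or Hodge theory is needed. First I would record what Theorem~\ref{c-thm1.2} gives concretely. Since $\mathbf K+\mathbf B$ is $\mathbb Q$-b-Cartier, there is a proper birational morphism $\sigma_1:Y_1\to Y$ from a normal variety with $(\mathbf K+\mathbf B)_{Y_1}=K_{Y_1}+B_{Y_1}$ $\mathbb Q$-Cartier and $\mathbf K+\mathbf B=\overline{K_{Y_1}+B_{Y_1}}$; in particular $(\mathbf K+\mathbf B)_{Y''}=\pi^*(K_{Y_1}+B_{Y_1})$ for every proper birational morphism $\pi:Y''\to Y_1$ from a normal variety. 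Since $\mathbf M$ is b-potentially nef, there is likewise a proper birational morphism $\sigma_2:Y_2\to Y$ from a normal variety such that $\mathbf M_{Y_2}$ is a potentially nef $\mathbb Q$-Cartier $\mathbb Q$-divisor and $\mathbf M=\overline{\mathbf M_{Y_2}}$.

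Next I would pass to a common model. Let $Y'$ be the normalization of the main component of $Y_1\times_Y Y_2$, so that the projections induce proper birational morphisms $p_i:Y'\to Y_i$ and $\sigma:=\sigma_1\circ p_1=\sigma_2\circ p_2:Y'\to Y$ is proper and birational. On $Y'$ we then have $K_{Y'}+B_{Y'}=(\mathbf K+\mathbf B)_{Y'}=p_1^*(K_{Y_1}+B_{Y_1})$, which is $\mathbb Q$-Cartier, and $M_{Y'}=\mathbf M_{Y'}=p_2^*\mathbf M_{Y_2}$, which is a $\mathbb Q$-Cartier $\mathbb Q$-divisor. For any proper birational $\nu:Y''\to Y'$ from a normal variety, the composite $Y''\to Y_1$ and the functoriality of pullbacks of $\mathbb Q$-Cartier divisors give $K_{Y''}+B_{Y''}=(\mathbf K+\mathbf B)_{Y''}=\nu^*p_1^*(K_{Y_1}+B_{Y_1})=\nu^*(K_{Y'}+B_{Y'})$, and in the same way $M_{Y''}=\mathbf M_{Y''}=\nu^*p_2^*\mathbf M_{Y_2}=\nu^*M_{Y'}$. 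This yields (i) and the pullback-compatibility assertion in (ii).

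The only remaining point, and the one requiring some care rather than mere bookkeeping, is that $M_{Y'}$ is potentially nef. By the definition of potential nefness applied on $Y_2$, choose a normal complete variety $\overline{Y_2}$ containing $Y_2$ as a dense Zariski open set together with a nef $\mathbb Q$-divisor $\overline M$ on $\overline{Y_2}$ with $\overline M|_{Y_2}=\mathbf M_{Y_2}$. Fix any compactification $\overline{Y'_0}$ of $Y'$ and let $\overline{Y'}$ be the normalization of the closure of the graph of $p_2$ inside $\overline{Y'_0}\times\overline{Y_2}$; this is a normal complete variety containing $Y'$ as a dense Zariski open set, and $p_2$ extends to a morphism $\overline{p_2}:\overline{Y'}\to\overline{Y_2}$. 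Since the pullback of a nef divisor under a morphism of complete varieties is nef, $\overline{p_2}^*\overline M$ is a nef $\mathbb Q$-divisor on $\overline{Y'}$, and its restriction to $Y'$ equals $p_2^*\mathbf M_{Y_2}=M_{Y'}$; hence $M_{Y'}$ is potentially nef, and $\sigma:Y'\to Y$ has all the required properties. I expect this extension step — producing $\overline{Y'}$ and $\overline{p_2}$ so that $\overline M$ can be pulled back — to be the main (and still mild) obstacle, everything else being a direct translation of Theorem~\ref{c-thm1.2} into divisor-theoretic language.
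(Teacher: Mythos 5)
Your argument is correct and follows exactly the route the paper intends: Theorem \ref{c-thm1.3} is presented there as a direct restatement of Theorem \ref{c-thm1.2}, with no separate proof given. Your passage to a common model $Y'$ and the verification that potential nefness survives pullback (by extending $p_2$ to a morphism of compactifications, applied to each potentially nef summand of $\mathbf M_{Y_2}$ with its own completion) correctly fills in the bookkeeping the paper leaves implicit.
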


We note that $B_{Y'}$ (resp.~$B_{Y''}$) is the discriminant 
$\mathbb Q$-divisor on $Y'$ (resp.~$Y''$) and 
that $M_{Y'}$ (resp.~$M_{Y''}$) is the 
moduli $\mathbb Q$-divisor on $Y'$ (resp.~$Y''$) in Theorem \ref{c-thm1.3}. 

\medskip 

In \cite{ambro3}, Florin Ambro established Theorem \ref{c-thm1.2} 
under the assumption that $(X, B)$ has only 
sub kawamata log terminal singularities over 
the generic point of $Y$. The case 
where $(X, B)$ has only sub log canonical 
singularities over the generic point of 
$Y$ was proved in \cite{fujino-gongyo}. 
Note that Ambro used the theory of variations of 
Hodge structure in \cite{ambro3} and 
Gongyo and the author used the theory of variations of 
mixed Hodge structure in \cite{fujino-gongyo}. 

\medskip 

On moduli $\mathbb Q$-b-divisors, we pose the following 
conjecture.  

\begin{conj}[b-semi-ampleness conjecture]\label{c-conj1.4} 
Let $f:(X, B)\to Y$ be a basic 
slc-trivial fibration. 
Then the moduli part $\mathbf M$ is 
b-semi-ample. 
\end{conj}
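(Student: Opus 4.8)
\emph{Plan of proof.} Since Conjecture~\ref{c-conj1.4} concerns the b-divisor $\mathbf M$, we are free to replace $Y$ by any higher birational model, so the first step is to put $f\colon(X,B)\to Y$ into a standard shape. After a generically finite base change (Kawamata's covering trick) and semistable reduction, arrange that $Y$ is smooth, that $\Supp\mathbf B$ is a simple normal crossing divisor, and that all local monodromies of the variation of mixed Hodge structure attached to $f$ around $\Supp\mathbf B$ are unipotent; simultaneously make the induced basic slc-trivial fibration semistable in codimension one with $X$ reduced and $B=B^{\le 1}$. One must record, by a trace argument as in the lc-trivial case, how $\mathbf M$ transforms under the generically finite base change, since b-semi-ampleness will first be proved on a cover and then has to be pushed down; semi-ampleness of line bundles does descend along finite surjections, but keeping the birational models compatible requires care and is part of this first step.

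The second step is the Hodge-theoretic identification of $\mathbf M$, following the proof of Theorem~\ref{c-thm1.2} and its predecessors in \cite{ambro3} and \cite{fujino-gongyo}: on the standard model $Y'$, the moduli $\mathbb Q$-divisor $M_{Y'}$ equals, modulo a $\mathbb Q$-divisor supported on the boundary, the determinant of the canonical (Deligne) extension of the lowest piece $F^{p}$ of the Hodge filtration on a suitable graded quotient $\Gr^W_{\bullet}R^{d}f_{!}\mathbb Q$ of the variation of mixed Hodge structure on compact-support cohomology of the fibres, $d$ the relative dimension of $f$, the weight filtration here encoding the reducibility of $X$ and the coefficient-one part of $B$. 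Theorem~\ref{c-thm1.2} already shows this class is b-nef; the heart of the matter is to upgrade b-nef to b-semi-ample. For that, realize $\mathbf M$ as the pullback of a distinguished line bundle under the period map $Y'\dashrightarrow \Gamma\backslash D$ into a quotient of the relevant classifying space, extended to a morphism into a compactification of $\Gamma\backslash D$ (in the polarized weight-one situation the Satake--Baily--Borel compactification; in general a Kato--Usui-type log partial compactification), and prove that this distinguished line bundle is semi-ample there. When the general fibre of $f$ has dimension one this works unconditionally: the variation is essentially that of an elliptic fibration, $\Gamma\backslash D$ is a modular curve, the distinguished bundle is the Hodge bundle $\lambda$, ample on the Baily--Borel compactification, and Conjecture~\ref{c-conj1.4} follows in this case exactly as the elliptic case of the Prokhorov--Shokurov conjecture does. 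For larger fibre dimension one would instead attempt a Viehweg-type fibre-product covering to reduce positivity of $\mathbf M$ to that of a Hodge (or CM) line bundle on a moduli space of the polarized fibres.

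The main obstacle is precisely this last passage from b-nef to b-semi-ample. There is no semi-ampleness strengthening of the Fujino--Fujisawa semipositivity theorem for graded-polarizable variations of mixed Hodge structure, so nefness coming from Theorem~\ref{c-thm1.2} does not self-improve; and the geometric substitute, semi-ampleness of the Hodge/CM line bundle on a compactified moduli space of the fibres, is itself known only in low dimensions (curves via modular curves; abelian varieties and polarized K3 surfaces via Baily--Borel and Borel's extension theorem) and is open in general, being a close relative of the very conjecture it would prove. Note also that one cannot simply cut down to curves: a b-nef $\mathbb Q$-divisor need not be semi-ample even when its restriction to every general complete intersection curve is. Accordingly the realistic outcome of this plan is: (a) an unconditional proof of Conjecture~\ref{c-conj1.4} when the general fibre of $f$ has dimension $\le 1$, via the modular-curve argument together with the descent step above; (b) a proof when the fibre dimension is $2$ or $\dim Y\le 2$, using the known structure of the corresponding period domains; and (c) a reduction of the general case, through the period-map and fibre-product strategy, to the conjectural semi-ampleness of Hodge line bundles on moduli of polarized varieties, thereby making explicit exactly what extra input the full conjecture requires.
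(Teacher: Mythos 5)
This statement is a \emph{conjecture}, not a theorem of the paper: the author explicitly records that it ``is still widely open even when $X$ is a smooth irreducible variety and $B=0$,'' and the paper contains no proof of it. So there is nothing in the paper against which your argument can be checked, and to your credit your proposal is honest that it is a strategy rather than a proof --- your own final paragraph concedes that the passage from b-nef to b-semi-ample is exactly the open point. That concession is correct, and it means the proposal does not establish the conjecture.

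Two of your claimed ``realistic outcomes'' are nevertheless overstated. First, the special case the paper actually records (Corollary~\ref{c-cor1.6}) is $\dim Y=1$, not relative dimension $\le 1$, and it is obtained by a different route from your modular-curve argument: one reduces to complete $Y$ by Lemma~\ref{d-lem4.12}, notes that on a complete curve a nef $\mathbb Q$-divisor is semi-ample once one knows that numerical triviality implies $\mathbb Q$-linear triviality, and the latter is Theorem~\ref{c-thm1.5}, proved in \cite{fujino-fujisawa-liu} by Hodge-theoretic means. Your case (a) --- general fibre of dimension $\le 1$ --- is a genuinely different assertion. The Kawamata/Prokhorov--Shokurov argument you invoke is known for lc-trivial fibrations with \emph{irreducible} generic fibre, where the relevant variation is pure of weight one and the period map lands in a modular curve. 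For a basic slc-trivial fibration the generic fibre is in general a reducible nodal configuration, the variation on compact-support cohomology is genuinely mixed, and there is no Baily--Borel-type target with an ample Hodge bundle available; so (a) does not follow as stated. Second, case (b) (fibre dimension $2$, or $\dim Y=2$) is not known even for klt-trivial fibrations, so claiming it as an outcome of the plan is unwarranted. What survives of your proposal is the correct identification of the obstruction and the (standard, and in the paper's Sections~\ref{c-sec6}--\ref{c-sec9} carefully executed) reduction steps: unipotent reduction via the covering lemmas, descent of $\mathbf M$ under finite base change via Lemma~\ref{d-lem4.10}, and the Hodge-theoretic description of $M_Y$ as an eigensheaf of $h_*\omega_{V/Y}((B^h_V)^{=1})$ in Proposition~\ref{d-prop6.3}. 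These yield b-potential nefness (Theorem~\ref{c-thm1.2}), not b-semi-ampleness.
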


By Lemma \ref{d-lem4.12} below, we see that it is sufficient to 
prove Conjecture \ref{c-conj1.4} under the extra assumption that 
$Y$ is complete. 
Conjecture \ref{c-conj1.4} is still widely open even when 
$X$ is a smooth irreducible variety and $B=0$. 
For some known cases and related topics, we recommend the 
reader to see \cite{kawamata-sub1}, 
\cite{fujino-certain}, 
\cite{ambro5}, 
\cite{prokhorov-shokurov}, 
\cite{fujino-gongyo}, and so on. In a 
joint paper with Taro Fujisawa and Haidong Liu (see \cite{fujino-fujisawa-liu}), 
we will prove: 

\begin{thm}\label{c-thm1.5} 
If $Y$ is complete and $\mathbf M_{Y'}$ is numerically trivial in 
Theorem \ref{c-thm1.2}, 
then $\mathbf M_{Y'}\sim _{\mathbb Q}0$ holds. 
\end{thm}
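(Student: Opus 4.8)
The plan is to reduce the statement to a concrete problem about the variation of mixed Hodge structure underlying the basic slc-trivial fibration, and then to invoke the theory of normal functions together with the description of $\mathbf M_{Y'}$ as a Deligne-type canonical extension. First I would use Lemma \ref{d-lem4.12} (the reduction to complete $Y$) implicitly — it is already assumed that $Y$ is complete — and replace $Y$ by the model $Y'$ furnished by Theorem \ref{c-thm1.2}, so that $\mathbf M_{Y'}$ is a genuine potentially nef $\mathbb Q$-Cartier $\mathbb Q$-divisor that is pulled back correctly under all further blow-ups; in particular the class of $\mathbf M_{Y'}$ is numerically trivial by hypothesis. After further birational modification and passing to the smooth open locus $Y'_0 \subset Y'$ over which $f$ is nice, the moduli divisor is the first Chern class of the bottom piece of the Hodge filtration on the canonical extension $\overline{V}$ of the variation of mixed Hodge structure on the cohomology with compact support of the general fiber; this is exactly the description used to prove potential nefness in the proof of Theorem \ref{c-thm1.2}.

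The key steps, in order, are: (1) set up the variation of mixed Hodge structure $\mathcal H$ on $Y'_0$ whose lowest nonzero graded quotient of the Hodge filtration computes $\mathbf M_{Y'}$, and record that $c_1(\det \overline{\mathcal F}) = \mathbf M_{Y'}$ up to $\mathbb Q$-linear equivalence on the relevant model; (2) observe that numerical triviality of a potentially nef $\mathbb Q$-divisor on a complete normal variety forces its self-intersection-type positivity to vanish, and combine this with the Fujita–Kawamata-type semipositivity already invoked in the proof of Theorem \ref{c-thm1.2} to conclude that the associated Hodge bundle has vanishing first Chern class; (3) deduce that after an étale-type or generically finite base change the period map is constant, i.e. the variation of (mixed) Hodge structure becomes birationally isotrivial, so that $\overline{\mathcal F}$ is (birationally) trivial as a line bundle; (4) chase this back through the canonical-bundle-formula bookkeeping to get $\mathbf M_{Y'} \sim_{\mathbb Q} 0$ on $Y'$, and finally push forward to $Y$ using that $\mathbf M = \overline{\mathbf M_{Y'}}$ and that $\sigma_* \mathcal O_{Y'} = \mathcal O_Y$.

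The main obstacle I expect is step (3): passing from "the Hodge bundle $\overline{\mathcal F}$ has trivial first Chern class" to "the period map is constant" is the content of a rigidity/semipositivity argument that is delicate in the mixed Hodge setting, because one must control the weight filtration and the non-semisimplicity of the monodromy, not merely the pure Hodge structure. Concretely, one needs a version of the statement that a variation of mixed Hodge structure with a numerically trivial Hodge bundle is, after finite base change and birational modification, a direct sum of (shifted) constant variations — this is where the joint work with Fujisawa and Liu does the real work, using asymptotic Hodge theory (the $\operatorname{SL}_2$-orbit theorem for mixed Hodge structures and the theory of admissible normal functions) to rule out any nontrivial extension data. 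A secondary technical point is ensuring that all the birational modifications used to make $\mathbf M_{Y'}$ Cartier and the VMHS unipotent are compatible, so that the final $\mathbb Q$-linear equivalence $\mathbf M_{Y'}\sim_{\mathbb Q}0$ descends; this is routine but must be done carefully because potential nefness is only a birational statement.
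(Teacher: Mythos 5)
The paper does not prove Theorem \ref{c-thm1.5}. It states explicitly that the result ``will'' be proved in the joint paper with Fujisawa and Liu \cite{fujino-fujisawa-liu} and refers the reader there for ``the details of Theorem \ref{c-thm1.5} and Corollary \ref{c-cor1.6}.'' There is therefore no in-paper argument to compare your sketch against, and any assessment has to be of the internal soundness of your outline on its own.

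Your framework is consistent with the machinery the paper does build. Step (1), identifying $\mathbf M_{Y'}$ with (an eigensheaf summand of) a Hodge bundle coming from the canonical extension of an admissible graded polarizable VMHS on cohomology with compact support, is precisely the content of Section \ref{c-sec6}: see Proposition \ref{d-prop6.3} (iv), where $\mathcal O_Y(M_Y)$ is realized as the $\zeta^{-1}$-eigensheaf of $h_*\omega_{V/Y}((B^h_V)^{=1})$ after cyclic cover, unipotent reduction (Lemma \ref{c-lem7.3}), and Theorem \ref{c-thm3.1}. Step (4) is routine once step (3) is known. The genuine gap is step (3), and while you locate it correctly, locating is not closing. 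To pass from ``the determinant of the relevant Hodge piece is a nef, numerically trivial line bundle on a complete base with unipotent local monodromy'' to ``$\mathbf M_{Y'}\sim_{\mathbb Q}0$'' one needs a rigidity/flatness statement for the admissible VMHS: flatness of the Hodge metric forces the period map to factor appropriately, and in the mixed setting one must additionally control the weight filtration and the extension data between the graded pieces (this is what the normal-function/$\operatorname{SL}_2$-orbit input is for). None of that is carried out in your proposal; you defer it to \cite{fujino-fujisawa-liu}, exactly as the paper itself does. You should also note a small imprecision in step (2): numerical triviality by itself already kills all curve degrees, so ``self-intersection-type positivity vanishes'' is not the content you need; the actual input is that the Hodge metric has semipositive curvature, so a numerically trivial nef Hodge line bundle on a compact base is metrically flat, which is the purely-Hodge-theoretic version of the argument (as in \cite{ambro5} and \cite{floris}) that \cite{fujino-fujisawa-liu} extends to the mixed, reducible-fiber setting. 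As written, then, your proposal is a plausible roadmap but not a proof: the theorem's substance lives entirely in the step you explicitly leave to the cited joint work.
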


As an easy consequence of Theorem \ref{c-thm1.5}, 
we have the following result. 

\begin{cor}\label{c-cor1.6} 
Conjecture \ref{c-conj1.4} holds true when $Y$ is a curve. 
\end{cor}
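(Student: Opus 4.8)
The plan is to combine the b-potential nefness supplied by Theorem~\ref{c-thm1.2} with the torsion statement of Theorem~\ref{c-thm1.5}, exploiting the fact that semi-ampleness of a $\mathbb Q$-divisor on a smooth projective curve is detected purely by its degree.

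First I would invoke Lemma~\ref{d-lem4.12} to reduce to the case in which $Y$ is complete. Since $Y$ is a normal irreducible curve it is automatically smooth, and being complete it is a smooth projective curve. Moreover any proper birational morphism $Y'\to Y$ from a normal variety $Y'$ with $Y'$ of dimension one is an isomorphism, so the moduli b-divisor $\mathbf M$ has no nontrivial higher models over $Y$: it is determined by its trace $\mathbf M_Y$, and $\mathbf M=\overline{\mathbf M_Y}$. Hence proving Conjecture~\ref{c-conj1.4} in this case amounts to showing that $\mathbf M_Y$ is a semi-ample $\mathbb Q$-divisor on $Y$.

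Next, by Theorem~\ref{c-thm1.2}(ii) there is a proper birational morphism $\sigma:Y'\to Y$ with $\mathbf M_{Y'}$ potentially nef and $\mathbf M=\overline{\mathbf M_{Y'}}$; by the previous reduction we may take $Y'=Y$, so $\mathbf M_Y$ is potentially nef on $Y$. Since $Y$ is already complete, this means $\mathbf M_Y$ is itself a nef $\mathbb Q$-divisor, i.e. $\deg \mathbf M_Y\geq 0$. If $\deg \mathbf M_Y>0$, then $\mathbf M_Y$ is an ample $\mathbb Q$-divisor on the projective curve $Y$, in particular semi-ample. If $\deg \mathbf M_Y=0$, then $\mathbf M_Y$ is numerically trivial on the complete variety $Y$, so Theorem~\ref{c-thm1.5} applies and yields $\mathbf M_Y\sim_{\mathbb Q}0$, which is trivially semi-ample. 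In either case $\mathbf M_Y$ is semi-ample, so $\mathbf M=\overline{\mathbf M_Y}$ is b-semi-ample, which is the assertion of Conjecture~\ref{c-conj1.4}.

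The main obstacle is entirely hidden inside Theorem~\ref{c-thm1.5} (proved in \cite{fujino-fujisawa-liu}), which handles the delicate numerically-trivial-but-possibly-non-torsion case; once that is granted, the remaining argument is the elementary dichotomy that on a smooth projective curve a $\mathbb Q$-divisor is semi-ample exactly when it is ample or $\mathbb Q$-linearly trivial. The only genuinely new points requiring care are the reduction to complete $Y$ via Lemma~\ref{d-lem4.12} and the observation that a normal complete curve admits no nontrivial birational models, so that b-semi-ampleness of $\mathbf M$ collapses to semi-ampleness of the single trace $\mathbf M_Y$.
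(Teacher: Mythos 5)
Your argument is correct and is exactly the deduction the paper has in mind: the paper states Corollary \ref{c-cor1.6} as "an easy consequence of Theorem \ref{c-thm1.5}" (deferring details to \cite{fujino-fujisawa-liu}), and the intended content is precisely your dichotomy --- reduce to complete $Y$ by Lemma \ref{d-lem4.12}, note that a normal complete curve has no nontrivial birational models so $\mathbf M=\overline{\mathbf M_Y}$ with $\mathbf M_Y$ nef by Theorem \ref{c-thm1.2}, and then conclude semi-ampleness from ampleness when $\deg \mathbf M_Y>0$ and from Theorem \ref{c-thm1.5} when $\deg \mathbf M_Y=0$. No gaps; this matches the paper's approach.
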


We note that Theorem \ref{c-thm1.5} is a generalization of 
\cite[Theorem 3.5]{ambro5} and \cite[Theorem 1.3]{floris} and 
that Corollary \ref{c-cor1.6} is a generalization of 
\cite[Theorem 0.1]{ambro3}. 
For the details of Theorem \ref{c-thm1.5} and Corollary \ref{c-cor1.6}, 
see \cite{fujino-fujisawa-liu}. 

\medskip 

As an application of Theorem \ref{c-thm1.3}, we will prove the 
following theorem. 
Theorem \ref{c-thm1.7}, which is 
one of the main motivations of this paper, 
will play an crucial role in the theory of 
quasi-log schemes. 

\begin{thm}[Structure theorem for 
normal irreducible quasi-log canonical pairs]\label{c-thm1.7}
Let $[X, \omega]$ be a quasi-log canonical pair such that 
$X$ is a normal irreducible variety. 
Then there exists a projective 
birational morphism 
$p:X'\to X$ from a smooth quasi-projective 
variety $X'$ such that 
$$
K_{X'}+B_{X'}+M_{X'}=p^*\omega, 
$$ 
where $B_{X'}$ is a subboundary $\mathbb R$-divisor, that is, 
$B_{X'}=B^{\leq 1}_{X'}$, 
such that $\Supp B_{X'}$ is a simple normal crossing 
divisor and that $B^{<0}_{X'}$ is 
$p$-exceptional, 
and $M_{X'}$ is a potentially nef $\mathbb R$-divisor on $X'$. 
Furthermore, we can make $B_{X'}$ satisfy 
$p(B^{=1}_{X'})=\nqklt(X, \omega)$.  

We further assume that $[X, \omega]$ has a $\mathbb Q$-structure. 
Then we can make $B_{X'}$ and $M_{X'}$ $\mathbb Q$-divisors in 
the above statement. 
\end{thm}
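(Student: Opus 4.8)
The plan is to reduce the statement to the Main Theorem in its divisorial form, Theorem~\ref{c-thm1.3}, applied to the basic slc-trivial fibration naturally attached to a qlc pair. First I would recall the definition of a quasi-log canonical pair $[X,\omega]$: by definition there is a proper morphism $f\colon (Z,B_Z)\to X$ from a (reducible) simple normal crossing pair such that $K_Z+B_Z\sim_{\mathbb R}f^*\omega$, with $B_Z=B_Z^{\leq 1}$ over the generic point of $X$, the natural map $\mathcal O_X\to f_*\mathcal O_Z(\lceil -(B_Z^{<1})\rceil)$ an isomorphism, and $\mathrm{Nqklt}(X,\omega)$ equal to the image of the non-klt strata. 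Since $X$ is normal irreducible, after discarding the strata of $Z$ not dominating $X$ (they contribute only to the discriminant) and passing to a birational model of $Z$, I may arrange that every stratum of $Z$ is dominant onto $X$ and that $f\colon Z\to X$ is projective surjective; the rank-one conditions (2) and (5) in the definition of a basic slc-trivial fibration then hold by the qlc hypothesis, and (4) is immediate with $D=\omega$. Thus $f\colon (Z,B_Z)\to X$ is a basic slc-trivial fibration over $X$ (working with $\mathbb R$-coefficients, or $\mathbb Q$-coefficients in the case of a $\mathbb Q$-structure).

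Next I would apply Theorem~\ref{c-thm1.3} to this fibration: there is a proper birational morphism $\sigma\colon X'\to X$ from a normal variety such that $K_{X'}+B_{X'}$ is $\mathbb Q$-Cartier and crepant-stable under further blow-ups, and $M_{X'}$ is $\mathbb Q$-Cartier, potentially nef, and pullback-stable; moreover $K_{X'}+B_{X'}+M_{X'}=\sigma^*(K_X+B_X+M_X)$, and since $K_X+B_X+M_X$ "is" $\omega$ in the sense of the canonical bundle formula (the total transform of $K_X+\omega$), this reads $K_{X'}+B_{X'}+M_{X'}=\sigma^*\omega$. To finish, I need three cosmetic improvements. (a) Resolve $X'$: replace $X'$ by a log resolution $p\colon X'\to X$ so that $\mathrm{Supp}\,B_{X'}$ becomes a simple normal crossing divisor and $X'$ is smooth quasi-projective; this is harmless because both $K+B$ and $M$ are stable under the crepant/numerical pullback by Theorem~\ref{c-thm1.3}(i)(ii). (b) Check $B_{X'}=B_{X'}^{\leq 1}$: this is the log canonical property of the discriminant divisor, which holds because $(Z,B_Z)$ is sub log canonical over the generic point of $X$ — exactly the content of the discriminant $\mathbb Q$-b-divisor computation in \ref{c-say4.5} — so the coefficients of $\mathbf B_{X'}$ are $\leq 1$. (c) Exceptionality of $B_{X'}^{<0}$ and the identity $p(B_{X'}^{=1})=\mathrm{Nqklt}(X,\omega)$: over the open locus where $\sigma$ is an isomorphism the discriminant coincides with the "boundary" part of the qlc structure, whose round-up-one locus maps precisely to $\mathrm{Nqklt}(X,\omega)$; after the log resolution in (a), any new components of $B_{X'}^{=1}$ lying over lower-dimensional strata can be absorbed — using that we are free to modify $B_{X'}$ by a $p$-exceptional divisor and that negative coefficients only appear on $p$-exceptional components. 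The $\mathbb Q$-structure case is identical, simply running Theorem~\ref{c-thm1.3} with $\mathbb Q$-coefficients throughout.

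I expect the main obstacle to be step (c), specifically the bookkeeping needed to guarantee simultaneously that $B_{X'}^{<0}$ is $p$-exceptional \emph{and} that $p(B_{X'}^{=1})$ equals $\mathrm{Nqklt}(X,\omega)$ on the nose after a possibly large resolution. The point is that a priori the discriminant $\mathbb Q$-divisor $\mathbf B_{X'}$ produced by the canonical bundle formula may have components of coefficient exactly $1$ sitting over the non-qlc locus but also spurious exceptional components, or conversely may fail to see some stratum; reconciling this requires choosing the birational model $X'$ carefully (dominating both a given log resolution and the model furnished by Theorem~\ref{c-thm1.3}) and then adjusting $B_{X'}$ within its $p$-exceptional ambiguity. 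The remaining difficulty, comparatively minor, is making sure the passage from the reducible simple normal crossing source $Z$ of the qlc structure to an \emph{honest} basic slc-trivial fibration — deleting non-dominant strata and taking a suitable blow-up — preserves conditions (2) and (5); this is where the normality and irreducibility of $X$ are essential, since they force the generic stratum to be unique.
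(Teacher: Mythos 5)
Your overall strategy coincides with the paper's: pass to a qlc structure $f\colon (Y,B_Y)\to X$ all of whose strata dominate $X$ (this is the paper's Theorem \ref{c-thm10.3}, proved via Stein factorization and Zariski's main theorem, and it also shows the modified structure has the \emph{same} $\Nqklt$), view it as a basic slc-trivial fibration, and apply the main theorem. But there are two genuine gaps. First, and most seriously, you propose to run the argument ``with $\mathbb R$-coefficients, or $\mathbb Q$-coefficients in the case of a $\mathbb Q$-structure,'' and you end by saying the $\mathbb Q$-case is obtained by ``running Theorem \ref{c-thm1.3} with $\mathbb Q$-coefficients throughout.'' This reverses the actual logic. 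Basic slc-trivial fibrations are defined only with $\mathbb Q$-divisors, and Theorems \ref{c-thm1.2}/\ref{c-thm1.3} are intrinsically $\mathbb Q$-coefficient statements: the proof takes a $b$-fold cyclic cover with $b=b(F,B_F)$ and invokes variations of mixed Hodge structure, none of which makes sense for an $\mathbb R$-divisor $B_Y$ with $K_Y+B_Y\sim_{\mathbb R}f^*\omega$. There is no $\mathbb R$-coefficient version of Theorem \ref{c-thm1.3} to apply. The paper handles the general case by a rational-approximation lemma (Lemma \ref{c-lem11.1}): one writes $\omega=\sum_i r_i\omega_i$ with $r_i\in\mathbb R_{>0}$, $\sum r_i=1$, and $\mathbb Q$-divisors $D_i$ on $Y$ with $K_Y+D_i\sim_{\mathbb Q}f^*\omega_i$, $D_i^{=1}=B_Y^{=1}$ and $\lceil -(D_i^{<1})\rceil=\lceil -(B_Y^{<1})\rceil$, applies the $\mathbb Q$-case to each $(X,\omega_i,f\colon (Y,D_i)\to X)$ over a single common model $X'$, and takes the weighted sum. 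Your proposal is missing this step entirely.

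Second, the key assertion that $B^{<0}_{X'}$ is $p$-exceptional amounts to the effectivity of the discriminant $\mathbb Q$-divisor $\mathbf B_X$ on $X$ itself, i.e.\ $b_P\leq 1$ for every prime divisor $P$ on $X$; the non-exceptional components of $B_{X'}$ are exactly the strict transforms of such $P$, with coefficient $1-b_P$. This is \emph{not} a consequence of sub-log-canonicity of $(Y,B_Y)$ (which only gives $b_P\geq 0$, i.e.\ $\mathbf B_X\leq 1$), and it cannot be arranged by ``modifying $B_{X'}$ within its $p$-exceptional ambiguity,'' since $B_{X'}$ is pinned down by the canonical bundle formula once $M_{X'}$ is. The paper proves it as Lemma \ref{c-lem11.2}: if $b_P>1$ for some $P$, then after cutting by general hyperplanes one finds $f^*P\leq\lceil -(B_Y^{<1})\rceil$ near $P$, giving $\mathcal O_X\subsetneq\mathcal O_X(P)\subset f_*\mathcal O_Y(\lceil -(B_Y^{<1})\rceil)$, contradicting the isomorphism $\mathcal O_X\simeq f_*\mathcal O_Y(\lceil -(B_Y^{<1})\rceil)$ in the definition of a qlc pair. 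You correctly observe that this isomorphism yields condition (5) of a basic slc-trivial fibration, but you do not use its full strength where it is actually needed. Finally, for $p(B^{=1}_{X'})=\Nqklt(X,\omega)$ the paper's inclusion $\Nqklt(X,\omega)\subset p(B^{=1}_{X'})$ is secured by flattening: one arranges that every component of the strict transform of $(B_Y^v)^{=1}$ maps onto a prime divisor of $X'$, so that it forces a coefficient-one component of the discriminant there; your sketch of (c) gestures at the difficulty but does not supply this mechanism.
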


We note that there are many examples of quasi-log 
canonical 
pairs in the theory of minimal models. 

\begin{ex}\label{c-ex1.8}
(1) Let $(X, \Delta)$ be a quasi-projective semi-log 
canonical pair. 
Then $[X, \omega]$, where $\omega=K_X+\Delta$, is 
a quasi-log canonical pair such that 
$W$ is a qlc stratum of $[X, \omega]$ if and only if 
$W$ is an slc stratum of $(X, \Delta)$. 
For the details, see \cite[Theorem 1.2]{fujino-slc}. 

(2) Let $W$ be a qlc stratum of a quasi-log canonical pair 
$[X, \omega]$. 
Then $[W, \omega|_W]$ is also a quasi-log canonical 
pair by adjunction (see \cite[Theorem 6.3.5]
{fujino-foundations}). 

(3) Let $[X, \omega]$ be a quasi-log canonical 
pair such that $X$ is irreducible. 
Let $\nu:X^\nu\to X$ be the normalization of $X$. 
Then we can prove that 
$[X^\nu, \nu^*\omega]$ is a quasi-log canonical 
pair. For the details, see \cite[Theorem 1.1]{fujino-haidong}. 

(4) Let $W$ be an slc stratum of a quasi-projective 
semi-log canonical 
pair $(X, \Delta)$. Then, by (1), (2), and (3) above, 
we see that 
$[W, \omega|_W]$ and $[W^\nu, \nu^*(\omega|_W)]$ are 
quasi-log canonical pairs, where $\omega=K_X+\Delta$ and 
$\nu:W^\nu\to W$ is the normalization of $W$.  
\end{ex}

Here we give an important remark on Theorem \ref{c-thm1.7}. 

\begin{rem}[Generalized polarized pairs]\label{c-rem1.9}
We put $B_X=p_*B_{X'}$ and $M_X=p_*M_{X'}$ in Theorem 
\ref{c-thm1.7}. 
Then $B_X$ is a boundary $\mathbb R$-divisor on $X$, 
that is, an effective $\mathbb R$-divisor on $X$ with 
$B_X=B^{\leq 1}_X$, 
since $B^{<0}_{X'}$ is $p$-exceptional. 
Of course, $K_X+B_X+M_X$ is $\mathbb R$-Cartier by construction. 
Let $X\to S$ be any projective 
morphism between quasi-projective varieties. 
Then, $(X, B_X+M_X)$ is a {\em{generalized 
polarized pair}} which comes with the data $X'\overset{p}\longrightarrow 
X\longrightarrow S$ and $M_{X'}$ as in 
\cite[Definition 1.4]{birkar-zhang}. 
Moreover, we can easily check that 
$(X, B_X+M_X)$ is {\em{generalized lc}} in the sense of 
\cite[Definition 4.1]{birkar-zhang}. 
We note that $(X, B_X+M_X)$ is {\em{generalized klt}} in the 
sense of \cite[Definition 4.1]{birkar-zhang} 
when $\Nqklt (X, \omega)=\emptyset$. 
Since $M_{X'}$ is potentially nef $\mathbb R$-divisor, 
$M_{X'}$ is a finite $\mathbb R_{>0}$-linear 
combination of relatively nef Cartier divisors 
over $S$. Hence $(X, B_X+M_X)$ is an NQC g-pair 
in the sense of \cite[Definition 2.13]{han-li}. 
For the details of generalized polarized pairs, 
we recommend the reader to see 
\cite[Section 4]{birkar-zhang} and \cite{han-li}. 
\end{rem}

By Theorem \ref{c-thm1.7}, 
we can prove a kind of subadjunction formula for minimal qlc strata of 
quasi-log canonical pairs. Corollary \ref{c-cor1.10} is a complete 
generalization of \cite[Theorem 1]{kawamata}. For a different generalization 
of \cite[Theorem 1]{kawamata}, see \cite[Theorem 1.2]{fujino-gongyo-sub}. 
We also recommend the reader to see \cite{fujino-wenfei2} 
for a generalization of Corollary \ref{c-cor1.10}. 

\begin{cor}[Subadjunction for minimal qlc strata]\label{c-cor1.10}
Let $[X, \omega]$ be a quasi-log canonical pair and let $W$ be a minimal 
qlc stratum of $[X, \omega]$. 
We assume that $W$ is quasi-projective and 
$H$ is any ample $\mathbb R$-divisor on $W$. 
Then we can construct an effective $\mathbb R$-divisor 
$\Delta_W$ on $W$ such that $(W, \Delta_W)$ is kawamata log 
terminal with $K_W+\Delta_W\sim _{\mathbb R} \omega|_W+H$. 
We further assume that $[X, \omega]$ has a $\mathbb Q$-structure 
and $H$ is an ample $\mathbb Q$-divisor on $W$. 
Then we can make $\Delta_W$ a $\mathbb Q$-divisor with 
$K_W+\Delta_W\sim _{\mathbb Q}\omega|_W+H$.  
\end{cor}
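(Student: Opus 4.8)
The plan is to deduce the subadjunction formula from the structure theorem (Theorem \ref{c-thm1.7}) together with Example \ref{c-ex1.8}(2), which tells us that the minimal qlc stratum $W$ is itself a quasi-log canonical pair $[W, \omega|_W]$. The crucial point about minimality is that $W$ has no qlc stratum properly contained in it, so $\nqklt(W, \omega|_W)=\emptyset$; equivalently, $[W,\omega|_W]$ behaves like an object with only klt-type non-exceptional behaviour. Thus the first step is: apply Theorem \ref{c-thm1.7} to the normal irreducible quasi-log canonical pair $[W,\omega|_W]$ to obtain a projective birational morphism $p:W'\to W$ from a smooth quasi-projective variety with
\[
K_{W'}+B_{W'}+M_{W'}=p^*(\omega|_W),
\]
where $B_{W'}=B_{W'}^{\leq 1}$ has simple normal crossing support, $B_{W'}^{<0}$ is $p$-exceptional, $M_{W'}$ is potentially nef, and — using minimality of $W$ — we may arrange $p(B_{W'}^{=1})=\nqklt(W,\omega|_W)=\emptyset$, so in fact $B_{W'}^{=1}$ is $p$-exceptional as well. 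Hence $(W', B_{W'})$ is sub klt, and pushing forward, $(W, B_W)$ with $B_W=p_*B_{W'}$ is a genuine klt pair with $K_W+B_W+M_W\sim_{\mathbb R}\omega|_W$, where $M_W=p_*M_{W'}$.

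The second step is to absorb the ample divisor $H$ by perturbing the potentially nef part. Since $M_{W'}$ is a finite $\mathbb R_{>0}$-linear combination of divisors $\overline{M_i}|_{W'}$ with $\overline{M_i}$ nef on a compactification $\overline{W'}$, and since $p^*H$ is pulled back from an ample $\mathbb R$-divisor on $W$, the class $M_{W'}+p^*H$ is big on $W'$ (it is nef-plus-ample, hence its restriction is the restriction of a big class, after possibly passing to a further compactification on which we can add a small ample). The idea is then to write $M_{W'}+p^*H\sim_{\mathbb R} A'+E'$ with $A'$ ample and $E'\geq 0$ on $W'$ via a standard Kodaira-type argument, then pick a general effective $\Delta'\sim_{\mathbb R}\varepsilon A'+(1-\varepsilon)(M_{W'}+p^*H)$ — more carefully, choose a general member of a suitable $\mathbb R$-linear system so that $(W', B_{W'}+\Delta')$ remains sub klt — and set
\[
K_{W'}+B_{W'}+\Delta' \sim_{\mathbb R} K_{W'}+B_{W'}+M_{W'}+p^*H = p^*(\omega|_W+H).
\]
Pushing forward by $p$ and putting $\Delta_W=p_*(B_{W'}+\Delta')$ gives an effective $\mathbb R$-divisor (effectivity because $B_{W'}^{<0}$ is $p$-exceptional and $\Delta'$ is chosen effective) with $(W,\Delta_W)$ klt and $K_W+\Delta_W\sim_{\mathbb R}\omega|_W+H$, as desired.

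For the $\mathbb Q$-statement, one runs exactly the same argument using the last sentence of Theorem \ref{c-thm1.7}, which provides $B_{W'}$ and $M_{W'}$ as $\mathbb Q$-divisors when $[X,\omega]$ has a $\mathbb Q$-structure; with $H$ an ample $\mathbb Q$-divisor the perturbation can be chosen in the $\mathbb Q$-linear system, so $\Delta_W$ becomes a $\mathbb Q$-divisor and the equivalence becomes $\mathbb Q$-linear. The main obstacle I expect is the second step: making the potential-nefness of $M_{W'}$ interact cleanly with the ampleness of $H$, since $H$ lives on $W$ rather than on a compactification of $W'$, so one must be careful to perform the Kodaira-type decomposition and the general-member choice on $W'$ itself (where $M_{W'}+p^*H$ need not be nef, only the restriction of a nef-plus-ample class), while simultaneously preserving the sub-klt condition on $(W', B_{W'})$ — this requires the generality of $\Delta'$ and the snc hypothesis on $\Supp B_{W'}$, and is where Bertini-type arguments enter.
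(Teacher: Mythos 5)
Your first step matches the paper exactly: adjunction makes $[W,\omega|_W]$ a normal irreducible qlc pair with $\Nqklt(W,\omega|_W)=\emptyset$, and Theorem \ref{c-thm1.7} gives $p:W'\to W$ with $K_{W'}+B_{W'}+M_{W'}=p^*(\omega|_W)$. (One small slip: $p(B^{=1}_{W'})=\emptyset$ forces $B^{=1}_{W'}=\emptyset$, i.e.\ $B_{W'}=B^{<1}_{W'}$; ``$B^{=1}_{W'}$ is $p$-exceptional'' is not the right conclusion and would not give sub-kltness. Also, the intermediate assertion that $(W,p_*B_{W'})$ is ``a genuine klt pair'' is unjustified --- $K_W+p_*B_{W'}$ need not be $\mathbb R$-Cartier; only the generalized pair makes sense --- but you never use it.)

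The genuine gap is in the second step. With $M_{W'}+p^*H\sim_{\mathbb R}A'+E'$, the class $\varepsilon A'+(1-\varepsilon)(M_{W'}+p^*H)$ is $\mathbb R$-linearly equivalent to $M_{W'}+p^*H-\varepsilon E'$, \emph{not} to $M_{W'}+p^*H$, so your displayed equivalence $K_{W'}+B_{W'}+\Delta'\sim_{\mathbb R}p^*(\omega|_W+H)$ is false: you have dropped the term $\varepsilon E'$, and the resulting $\Delta_W$ would satisfy $K_W+\Delta_W\sim_{\mathbb R}\omega|_W+H-\varepsilon p_*E'$. The fixed effective part must be added back into $\Delta'$, and then you must control it: $E'$ coming from a Kodaira-type decomposition has no a priori relation to $B_{W'}$, so you need further blow-ups making $\Supp B_{W'}\cup\Supp E'$ simple normal crossing and $\varepsilon\ll 1$ to keep $\lfloor B_{W'}+\varepsilon E'+(\text{general member})\rfloor\leq 0$. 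You also never justify that $\varepsilon A'+(1-\varepsilon)(M_{W'}+p^*H)$ admits a \emph{general} effective member; this needs semi-ampleness, which in the paper comes from Lemma \ref{c-lem2.6} (potentially nef $+$ ample is ample). The paper's device resolves all of this at once: after more blow-ups it produces an effective $p$-exceptional $E$ with $-E$ $p$-ample and $\Supp B_{W'}\cup\Supp E$ snc, so that $-\varepsilon E+p^*H+M_{W'}$ is ample (Lemmas \ref{c-lem2.8} and \ref{c-lem2.6}); taking a general $G\sim_{\mathbb R}-\varepsilon E+p^*H+M_{W'}$ and setting $\Delta_W=p_*(B_{W'}+\varepsilon E+G)$ restores the exact equivalence (the $+\varepsilon E$ cancels the $-\varepsilon E$, and $p_*E=0$). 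Your argument becomes correct once you replace the Kodaira decomposition by this exceptional-divisor trick, or at least reinstate $\varepsilon E'$ in $\Delta'$ with the snc and smallness provisos above.
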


As an application of Theorem \ref{c-thm1.7}, 
we prove: 

\begin{cor}[\cite{fujino-haidong2}]\label{c-cor1.11} 
Every quasi-log canonical pair has only Du Bois singularities. 
\end{cor}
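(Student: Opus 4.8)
The plan is to derive Corollary~\ref{c-cor1.11} from the structure theorem (Theorem~\ref{c-thm1.7}) by induction on $\dim X$, reducing Du Bois-ness to a relative splitting statement on a log resolution which the injectivity and Hodge-theoretic machinery behind this paper supplies, and then invoking the standard descent properties of Du Bois singularities. First I would reduce to the case in which $X$ is normal and irreducible, which is the setting of Theorem~\ref{c-thm1.7}. If $X$ is reducible, its irreducible components and all of their mutual intersections are unions of qlc strata, hence carry qlc structures by adjunction (Example~\ref{c-ex1.8}\,(2)); a Mayer--Vietoris argument for the Deligne--Du Bois complex, together with induction on the number of components and on $\dim X$, reduces to $X$ irreducible. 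If $X$ is irreducible but non-normal, I pass to the normalization $\nu\colon X^\nu\to X$, which is again a qlc pair by Example~\ref{c-ex1.8}\,(3); the conductor is a union of qlc strata on both $X$ and $X^\nu$ (cf.\ \cite{fujino-haidong}), hence Du Bois by the induction on dimension, and one recovers Du Bois-ness of $X$ from that of $X^\nu$ and the conductors via the Mayer--Vietoris triangle for a finite map. So from now on $X$ is normal and irreducible, and we argue by induction on $\dim X$.

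Next I apply Theorem~\ref{c-thm1.7} to obtain $p\colon X'\to X$ with $X'$ smooth quasi-projective, $K_{X'}+B_{X'}+M_{X'}=p^*\omega$, $B_{X'}=B^{\le 1}_{X'}$ with $\Supp B_{X'}$ simple normal crossing, $B^{<0}_{X'}$ $p$-exceptional, $M_{X'}$ potentially nef, and $p(B^{=1}_{X'})=\Nqklt(X,\omega)=:N$. Put $E=B^{=1}_{X'}$, a reduced simple normal crossing divisor, and $A=\lceil -(B^{<1}_{X'})\rceil\ge 0$, an effective $p$-exceptional divisor supported on $B^{<0}_{X'}$. A direct computation from the structure equation gives
\[
A-E\sim_{\mathbb R}K_{X'}+\{B_{X'}\}+M_{X'}-p^*\omega ,
\]
together with $p_*\mathcal O_{X'}(A)=\mathcal O_X$ and $p_*\mathcal O_{X'}(A-E)=\mathcal I_N$. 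The heart of the argument is to show that the natural maps $\mathcal O_X\to Rp_*\mathcal O_{X'}(A)$ and $\mathcal I_N\to Rp_*\mathcal O_{X'}(A-E)$ admit left inverses in $D^b_{\mathrm{coh}}(X)$. Here I would exploit that $M_{X'}$, being the restriction of a nef divisor on a complete normal variety, meets every complete curve on $X'$ nonnegatively; in particular $M_{X'}$ is nef over $X$, so the twist $M_{X'}-p^*\omega$ in the displayed equivalence is $p$-nef. Feeding this into the injectivity/torsion-freeness theorems for simple normal crossing pairs — in the form used throughout the theory of quasi-log schemes; cf.\ the semipositivity and injectivity results behind this paper and \cite{fujino-foundations} — yields the desired splittings.

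Finally, arranging $p$ to be an isomorphism over $X\setminus Z$ with $Z=N\cup\operatorname{Sing} X$ and with $E':=p^{-1}(Z)_{\mathrm{red}}$ simple normal crossing, the pair $(X',E')$ is a simple normal crossing pair, so the Deligne--Du Bois complex of the pair $(X',E')$ is simply $\mathcal O_{X'}(-E')$. Pushing forward by $p$ and combining the splittings of the previous paragraph with the descent criteria for Du Bois pairs of Kov\'acs and Schwede, I conclude that $(X,N)$ is a Du Bois pair. Since $N=\Nqklt(X,\omega)$ is a proper closed subset of $X$ (again by Theorem~\ref{c-thm1.7}) and $[N,\omega|_N]$ is a qlc pair of dimension $<\dim X$, it is Du Bois by the induction hypothesis; and a Du Bois pair whose boundary is Du Bois has Du Bois total space. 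Hence $X$ is Du Bois, completing the induction.

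I expect the main obstacle to be the splitting step. If $M_{X'}-p^*\omega$ happens not to be $p$-big, relative Kawamata--Viehweg vanishing does not apply directly, and one must instead run the injectivity package for simple normal crossing pairs with a merely (potentially) nef twist, and then verify that the resulting splitting — rather than an outright vanishing of $R^i p_*\mathcal O_{X'}(A-E)$ — still suffices to identify $\underline\Omega^0_{(X,N)}$ after pushforward. A second, more bookkeeping-type difficulty is that $p$ is in general not an isomorphism over the qklt locus, since $X$ may be singular there: one must keep track of the effective $p$-exceptional correction $A$ and control the discrepancy between $E$ and the full reduced preimage $E'=p^{-1}(N\cup\operatorname{Sing} X)_{\mathrm{red}}$ throughout.
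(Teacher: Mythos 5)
The paper itself contains no proof of Corollary \ref{c-cor1.11}: it is quoted from the joint paper with Haidong Liu \cite{fujino-haidong2}, and only the fact that it is deduced there from Theorem \ref{c-thm1.7} is recorded here. Your overall architecture --- induction on $\dim X$; reduction to the normal irreducible case via adjunction, a Mayer--Vietoris argument and the normalization theorem of \cite{fujino-haidong}; then the structure theorem combined with a Kov\'acs--Schwede-type descent criterion for Du Bois pairs --- is the right one and is consistent with the strategy of the cited paper.

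However, the step you yourself flag as the heart of the matter is a genuine gap, and the mechanism you propose for it does not work as stated. Fujino's injectivity and torsion-freeness theorems for simple normal crossing pairs require the twist $A-E-(K_{X'}+\{B_{X'}\})$ to be relatively \emph{semi-ample} over $X$; what you have is $M_{X'}-p^*\omega$, which is only nef over $X$, and nef does not imply semi-ample, so ``feeding this into the injectivity package'' is not a legitimate application of those theorems. Two repairs are available. One is the perturbation already used in the proof of Corollary \ref{c-cor1.10}: by Lemmas \ref{c-lem2.6} and \ref{c-lem2.8}, $M_{X'}-\varepsilon E^\dag+p^*H$ is ample for a suitable $p$-ample $-E^\dag$ and ample $H$, so $M_{X'}$ can be traded for a general effective member absorbed into the boundary; but then one must check that this can be done without disturbing $B^{=1}_{X'}$ (hence $N$) and while keeping the pair sub log canonical, which reduces the claim to ``lc implies Du Bois'' rather than producing the splitting directly. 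The other, which is closer to what \cite{fujino-haidong2} actually does, is to extract the splitting not from the birational model $p\colon X'\to X$ of Theorem \ref{c-thm1.7} but from the qlc structure morphism $f\colon (Y,B_Y)\to X$ itself (normalized as in Theorem \ref{c-thm10.3}): there the relevant twist is $-f^*\omega$, which is relatively trivial, so the torsion-freeness and strict support theorems apply verbatim. Two further unproved assertions deserve attention: the identification $p_*\mathcal O_{X'}(A-E)=\mathcal I_N$ is automatic for the qlc structure morphism by adjunction (\cite[Theorem 6.3.5]{fujino-foundations}) but not for the model of Theorem \ref{c-thm1.7}, where $K_{X'}+B_{X'}$ is not a pullback and there may be $p$-exceptional divisors over $N$ with coefficient $<1$ in $B_{X'}$; and the exactness of your Mayer--Vietoris sequence of structure sheaves requires the scheme-theoretic intersections of unions of qlc strata to be reduced, which is itself a nontrivial input from the theory of quasi-log schemes.
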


Corollary \ref{c-cor1.11} is a complete 
generalization of \cite[Corollary 6.32]{kollar}. 
We discuss Corollary \ref{c-cor1.11} and some related 
topics in a joint paper with Haidong Liu (see \cite{fujino-haidong2}). 
We note that the arguments in \cite{fujino-haidong2} and 
this paper are free from the minimal model program. 

\medskip

By using Theorem \ref{c-thm1.7}, we will also prove: 

\begin{cor}[Simply connectedness and 
rationally chain connectedness of quasi-log canonical 
Fano pairs, \cite{fujino-wenfei2}]\label{c-cor1.12} 
Let $[X, \omega]$ be a connected projective 
quasi-log canonical pair. 
Assume that $-\omega$ is ample. 
Then $X$ is simply connected and rationally chain connected. 
\end{cor}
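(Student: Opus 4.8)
The plan is to induct on $\dim X$, in each dimension first treating normal irreducible $X$ by means of Theorem \ref{c-thm1.7} and then assembling the general case from its qlc strata. Recall that every irreducible component $X_i$ of $X$ is a qlc stratum of $[X,\omega]$, that the irreducible components of the intersections $X_i\cap X_j$ are, by the basic theory of quasi-log schemes, qlc strata as well, and that the restriction, resp.~the pull-back by a finite morphism, of an ample divisor is ample; hence, by Example \ref{c-ex1.8}\,(2),(3), the normalizations of these strata acquire projective quasi-log canonical structures with ample anti-log-canonical class, the intersections having dimension $<\dim X$. Granting the statement for normal irreducible qlc Fano pairs of dimension $\le\dim X$ and for all qlc Fano pairs of dimension $<\dim X$, one recovers it for $X$: since $X$ is connected, its components, each rationally chain connected and meeting along nonempty closed subsets, assemble into a rationally chain connected scheme; and $\pi_1(X)$ is trivial by van Kampen's theorem, because each stratum is simply connected and the nerve of the stratification of $X$ by its qlc strata is simply connected, the latter being forced by ampleness of $-\omega$ (a nontrivial loop in that nerve would produce a cyclic chain of qlc strata along which the log-canonical class of $\omega$ is numerically trivial rather than anti-ample).

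Now assume $X$ is normal and irreducible, and apply Theorem \ref{c-thm1.7} to obtain a projective birational morphism $p:X'\to X$ with $X'$ smooth and $K_{X'}+B_{X'}+M_{X'}=p^*\omega$, where $B_{X'}$ is a subboundary with simple normal crossing support and $p$-exceptional negative part, and $M_{X'}$ is potentially nef; since $X'$ is projective, $M_{X'}$ is in fact nef, so $-(K_{X'}+B_{X'})=p^*(-\omega)+M_{X'}$ is big and nef. By Remark \ref{c-rem1.9}, setting $B_X=p_*B_{X'}$ and $M_X=p_*M_{X'}$, the pair $(X,B_X+M_X)$ is an NQC generalized lc pair with $-(K_X+B_X+M_X)=-\omega$ ample, and it is generalized klt when $\Nqklt(X,\omega)=\emptyset$. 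In that klt case $X$ is rationally connected by the rational connectedness theorem for Fano-type varieties (Zhang, Hacon--McKernan) in its generalized-pair form; hence a resolution of $X$, in particular $X'$, is rationally connected, so $\pi_1(X')=1$ by the theorem of Campana and Kollár, and since $p_*\mathcal{O}_{X'}\simeq\mathcal{O}_X$ the map $\pi_1(X')\to\pi_1(X)$ is surjective, whence $\pi_1(X)=1$; rational chain connectedness is then immediate.

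It remains to treat normal irreducible $X$ with $\Nqklt(X,\omega)\ne\emptyset$. Put $N=\Nqklt(X,\omega)$. By the connectedness theorem for quasi-log canonical pairs (see \cite{fujino-foundations}) $N$ is connected, and every qlc stratum contained in $N$ is a projective qlc pair of dimension $<\dim X$ whose anti-log-canonical class is ample (Example \ref{c-ex1.8}\,(2)); by the induction hypothesis all strata of $N$ are simply connected and $N$ is rationally chain connected. Using the cone theorem for quasi-log canonical pairs (see \cite{fujino-foundations}), or a bend-and-break argument made available by ampleness of $-\omega$, one produces through every point of $X\setminus N$ a rational curve meeting $N$; together with the previous step this shows $X$ is rationally chain connected. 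Finally $\pi_1(X)=1$ follows from van Kampen's theorem applied to the stratification of $X$ by qlc strata, once one knows, as above, that its nerve is simply connected.

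The main obstacle is precisely this non-klt case. On the one hand, one must produce enough rational curves joining arbitrary points of $X$ to the qlc centers, doing so through the cone theorem for quasi-log canonical pairs---which itself rests ultimately on semipositivity theorems of the kind proved in this paper---rather than through the full minimal model program. On the other hand, for simple connectedness one must pin down the combinatorics of the qlc stratification precisely enough to run van Kampen's theorem, that is, establish that the dual complex of the qlc strata is simply connected; this is where the Fano hypothesis, not merely log canonicity, is indispensable, since ampleness of $-\omega$ is exactly what excludes the cyclic configurations of qlc strata that would otherwise carry nontrivial loops.
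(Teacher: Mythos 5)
The paper itself contains no proof of Corollary~\ref{c-cor1.12}: it is stated as an application of Theorem~\ref{c-thm1.7} and deferred to~\cite{fujino-wenfei2}, which is listed as in preparation (the analogous statement for semi-log canonical Fano pairs is the subject of~\cite{fujino-wenfei}). So there is no proof in the present text to compare against, and your proposal has to be judged on its own. The high-level strategy you sketch---induct on dimension, treat normal irreducible $X$ with $\Nqklt(X,\omega)=\emptyset$ via Theorem~\ref{c-thm1.7} plus the rational connectedness theorem for generalized Fano pairs, then assemble the non-klt and non-normal cases from the qlc stratification---is the natural one, but the two steps you present as essentially formal are exactly where all the work lives, and your justifications for them do not survive scrutiny.

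The first gap is the simple connectedness in the non-klt and reducible cases. You propose to apply van Kampen to the covering of $X$ by qlc strata, relying on each stratum being simply connected (inductive hypothesis) and on the nerve of the stratification being simply connected, and you claim that ampleness of $-\omega$ forces the latter because ``a nontrivial loop in that nerve would produce a cyclic chain of qlc strata along which the log-canonical class of $\omega$ is numerically trivial rather than anti-ample.'' This is not an argument; there is no mechanism by which a cyclic incidence pattern of strata implies numerical triviality of $\omega$. Simple connectedness of the dual complex of the non-klt locus of a Fano pair is a genuine theorem (cf.\ work on dual complexes of Fano-type pairs), and it needs an actual proof---typically a descent along the Koll\'ar--Shokurov connectedness lemma, or an MMP/Mori fiber space argument---not a remark about the sign of $\omega$. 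The gap is fatal to your reduction because Example~\ref{c-ex1.8}\,(3) does not control $\pi_1$: for an irreducible non-normal $X$ the normalization map $\nu:X^\nu\to X$ need not induce a surjection $\pi_1(X^\nu)\to\pi_1(X)$ (a nodal cubic has $\pi_1\cong\mathbb{Z}$ while its normalization is $\mathbb{P}^1$), so without the van Kampen step there is no route from the normal irreducible case back to general $X$.

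The second gap is rational chain connectedness for normal irreducible $X$ with $N:=\Nqklt(X,\omega)\ne\emptyset$. You assert that the cone theorem for quasi-log schemes, or bend-and-break, ``produces through every point of $X\setminus N$ a rational curve meeting $N$.'' The cone theorem produces extremal rational curves but gives no control over which points they pass through; bend-and-break requires a curve through the given point together with positivity along it, which you have not supplied. Joining an arbitrary point to the non-klt locus by a rational chain is precisely the content of the rational chain connectedness theorem in this setting, and in the log canonical literature it is established by running an MMP or by the methods of Zhang and Hacon--McKernan; it cannot be waved through as a consequence of the cone theorem. You also tacitly assume that irreducible components of $X_i\cap X_j$ are qlc strata; this is true, but it follows from the adjunction machinery of~\cite[Chapter~6]{fujino-foundations} and should be cited rather than treated as obvious.
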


\begin{cor}[Lengths of extremal rational curves, 
\cite{fujino-wenfei2}]\label{c-cor1.13} 
Let $[X, \omega]$ be a quasi-log canonical 
pair and let $\pi:X\to S$ be a projective morphism 
onto a variety $S$. 
Then every $\omega$-negative extremal ray $R$ of the 
relative Kleiman--Mori cone 
$\overline{NE}(X/S)$ is spanned by a rational curve $C$ 
with $0<-\omega\cdot C\leq 2\dim X$. 
\end{cor}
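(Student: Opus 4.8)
The plan is to deduce Corollary \ref{c-cor1.13} from Theorem \ref{c-thm1.7}, reducing the assertion to the now-standard estimate for the length of extremal rays of generalized lc pairs.

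First I would reduce to the case in which $X$ is normal and irreducible. Let $R$ be an $\omega$-negative extremal ray of $\overline{NE}(X/S)$. If $X$ is reducible or non-normal, pick an irreducible component $X_1$ of $X$; it is a qlc stratum, so $[X_1,\omega|_{X_1}]$ is quasi-log canonical by Example \ref{c-ex1.8}~(2), and then $[X_1^{\nu},\omega|_{X_1^{\nu}}]$ (writing $\omega|_{X_1^{\nu}}$ for the pullback to the normalization) is quasi-log canonical with $X_1^{\nu}$ normal and irreducible by Example \ref{c-ex1.8}~(3). As $X_1$ ranges over the components, the finite morphisms $X_1^{\nu}\to X$ jointly surject $\bigsqcup_{X_1}\overline{NE}(X_1^{\nu}/S)$ onto $\overline{NE}(X/S)$; applying the cone theorem for quasi-log canonical pairs to each $X_1^{\nu}$ then shows that $R$ is the image of some $(\omega|_{X_1^{\nu}})$-negative extremal ray of $\overline{NE}(X_1^{\nu}/S)$. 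Since $X_1^{\nu}\to X$ is finite it contracts no curve, the projection formula preserves the sign and does not increase the intersection number, and $\dim X_1^{\nu}\le\dim X$; hence a rational curve realizing the bound on $X_1^{\nu}$ maps to one on $X$ spanning $R$. So from now on we assume $X$ is normal and irreducible.

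Next, apply Theorem \ref{c-thm1.7}: there is a projective birational morphism $p\colon X'\to X$ with $X'$ smooth and
$$
K_{X'}+B_{X'}+M_{X'}=p^*\omega,
$$
where $B_{X'}=B^{\leq 1}_{X'}$ is an snc subboundary whose negative part $B^{<0}_{X'}$ is $p$-exceptional and $M_{X'}$ is potentially nef. Setting $B_X=p_*B_{X'}$ and $M_X=p_*M_{X'}$, Remark \ref{c-rem1.9} shows that $(X,B_X+M_X)$ is a generalized lc pair, in fact an NQC g-pair, with data $X'\to X\to S$ and nef part $M_{X'}$, and that $K_X+B_X+M_X\sim_{\mathbb R}\omega$. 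In particular $R$ is a $(K_X+B_X+M_X)$-negative extremal ray of the relative Kleiman--Mori cone of this generalized pair.

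Finally I would invoke the length estimate in the cone theorem for generalized lc pairs (see \cite{birkar-zhang}, \cite{han-li}): such an $R$ is spanned by a rational curve $C$ with $0<-(K_X+B_X+M_X)\cdot C\le 2\dim X$, which gives $0<-\omega\cdot C\le 2\dim X$, as desired. I expect this last step to be the main obstacle. Its proof is Mori's bend-and-break performed on the smooth model $X'$, where $K_{X'}+B_{X'}+M_{X'}=p^*\omega$ is negative on a lift of an $\omega$-negative curve: one deforms this lift keeping a point fixed, produces a rational curve of controlled $p^*\omega$-degree, and descends it to $X$. The delicate point is that $M_X$ need not be nef on $X$ itself — only $M_{X'}$ is nef on a compactification of $X'$ — and that $B_{X'}$ genuinely carries a negative part, so these contributions must be tracked carefully when passing from $X'$ down to $X$; it is precisely the potential nefness of $M_{X'}$ furnished by Theorem \ref{c-thm1.7} that makes such a bend-and-break argument, equivalently the generalized-pair cone theorem, available in this setting.
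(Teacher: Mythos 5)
The paper contains no proof of Corollary \ref{c-cor1.13}: it is attributed to \cite{fujino-wenfei2} and explicitly deferred there, with Remark \ref{c-rem1.9} signalling that the intended mechanism is exactly the one you use --- Theorem \ref{c-thm1.7} realizes a normal irreducible qlc pair as an NQC generalized lc pair $(X, B_X+M_X)$ with $K_X+B_X+M_X\sim_{\mathbb R}\omega$, one then quotes the cone theorem with length estimates for such pairs (\cite{han-li}), and the reducible or non-normal case is handled by passing to normalizations of irreducible components via Example \ref{c-ex1.8} (2), (3). Your proposal therefore follows the same route as the intended proof; the only points to tighten are that in the reduction step extracting from $R$ a $\nu^*\omega$-negative extremal ray on some $X_1^{\nu}$ requires the supporting-function/extremal-face argument from the qlc cone theorem (the pushforward cones need not be closed, so ``joint surjectivity'' alone is not quite enough), and that the length estimate for NQC generalized lc pairs is a genuine external input, which you correctly identify as the crux.
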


We will discuss a generalization of Corollary 
\ref{c-cor1.10}, Corollaries \ref{c-cor1.12} 
and \ref{c-cor1.13}, and some other applications in \cite{fujino-wenfei2}. 

\medskip 

Finally, as an application of Theorem \ref{c-thm1.7}, 
we prove the following Fujita-type freeness for 
quasi-log canonical surfaces in a 
joint paper with Haidong Liu (see \cite{fujino-haidong3}). 

\begin{cor}[{\cite{fujino-haidong3}}]\label{c-cor1.14} 
Let $[X, \omega]$ be a projective 
quasi-log canonical pair of dimension two and 
let $M$ be a Cartier divisor on $X$. 
We put $N=M-\omega$. 
Assume that $N^2\cdot X_i>4$ for every 
two-dimensional irreducible 
component $X_i$ of $X$ and 
that $N\cdot C\geq 2$ for every curve $C$ on $X$. 
Then the complete linear system $|M|$ is 
basepoint-free. 
\end{cor}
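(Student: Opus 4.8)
The plan is to pull the problem back along Theorem~\ref{c-thm1.7} to a basepoint-freeness question on a smooth projective surface, and then to run a Reider-type (``tie-breaking'') argument whose numerical thresholds come out to be exactly $N^{2}\cdot X_i>4$ and $N\cdot C\ge 2$. First I would reduce to the case in which $X$ is a normal irreducible surface. For reducible $X$, write $X=X_1\cup X''$ with $X_1$ an irreducible component and $X''=\overline{X\setminus X_1}$; by Example~\ref{c-ex1.8} and adjunction (see \cite{fujino-foundations}), $[X_1,\omega|_{X_1}]$ and $[X'',\omega|_{X''}]$ are again quasi-log canonical and inherit the hypotheses on $N$, so by induction on the number of irreducible components $|M|_{X_1}|$ and $|M|_{X''}|$ are basepoint-free; for $x\in X_1$ one then restricts a section over $X_1$ that is nonzero at $x$ to $X_1\cap X''$ and extends it over $X''$, the surjectivity of $H^{0}(X'',\mathcal O_{X''}(M|_{X''}))\to H^{0}(X_1\cap X'',\mathcal O_{X_1\cap X''}(M|_{X_1\cap X''}))$ following from the vanishing of $H^{1}(X'',\mathcal I_{X_1\cap X''}\otimes\mathcal O_{X''}(M|_{X''}))$, a case of the vanishing theorem for quasi-log canonical pairs because $N|_{X''}$ is nef and log big ($N^{2}\cdot X_i>4>0$ on the two-dimensional strata, $N\cdot C\ge 2>0$ on the one-dimensional ones). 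The case $\dim X\le 1$ is elementary. Finally, by Example~\ref{c-ex1.8}(3) we may replace $X$ by its normalization, so from now on $X$ is a normal irreducible surface.

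Next I would apply Theorem~\ref{c-thm1.7}: there is a projective birational morphism $p\colon X'\to X$ from a smooth projective surface $X'$ with $K_{X'}+B_{X'}+M_{X'}=p^{*}\omega$, where $B_{X'}$ is an snc subboundary, $B_{X'}^{<0}$ is $p$-exceptional, $p(B_{X'}^{=1})=\nqklt(X,\omega)$, and $M_{X'}$ is potentially nef --- hence genuinely nef, since $X'$ is projective (Definition~\ref{c-def1.1}). Because $X$ is normal and $M$ is Cartier, $H^{0}(X,\mathcal O_X(M))=H^{0}(X',\mathcal O_{X'}(p^{*}M))$, and it is enough to produce, for a fixed $x\in X$, a global section of $p^{*}M$ that is nonzero at some point of $p^{-1}(x)$. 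If $x\in\nqklt(X,\omega)$, then $x$ lies on a qlc center $W$ with $\dim W\le 1$; by adjunction $[W,\omega|_W]$ is quasi-log canonical (Example~\ref{c-ex1.8}), and $N|_W\cdot C\ge 2$ for curves $C\subseteq W$, so $|M|_W|$ is basepoint-free at $x$ by the already-settled one-dimensional case, and the section lifts because $H^{1}(X,\mathcal I_W\otimes\mathcal O_X(M))=0$ by the qlc vanishing theorem ($N$ nef and log big). This reduces everything to the key case $x\notin\nqklt(X,\omega)$.

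For that case I would choose a general point $x'\in p^{-1}(x)$, so that $x'$ avoids $B_{X'}^{=1}$ and $\Exc(p)$, and write $B_{X'}=\Delta_{X'}-E$ with $\Delta_{X'}=B_{X'}^{\ge 0}\ge 0$ and $E=-B_{X'}^{<0}\ge 0$ effective $p$-exceptional; then $p^{*}M+E=K_{X'}+\Delta_{X'}+P$ with $P:=M_{X'}+p^{*}N$, and $H^{0}(X',\mathcal O_{X'}(\lceil p^{*}M+E\rceil))=H^{0}(X,\mathcal O_X(M))$. Here $P$ is nef and $P^{2}\ge(p^{*}N)^{2}=N^{2}\cdot X>4$, since $M_{X'}$ and $p^{*}N$ are nef; as $P$ is also big, a Riemann--Roch estimate on $X'$ gives, for $m\gg 0$ and after a harmless perturbation making $P$ rational, a divisor $D_m\in|mP|$ with $\mult_{x'}D_m>2m$, so that $(X',\Delta_{X'}+\tfrac1m D_m)$ is not klt at $x'$. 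A standard tie-breaking step then yields an effective $\mathbb R$-divisor $D$, numerically a small multiple of $P$, such that $(X',\Delta_{X'}+D)$ is log canonical with a unique non-klt center $V\ni x'$; when $\dim V=1$ one cuts $V$ down to $\{x'\}$ using $P\cdot V\ge p^{*}N\cdot V=N\cdot p_{*}V\ge 2$, which also forces $V$ not to be $p$-exceptional, and when $V=\{x'\}$ one proceeds directly. Since $\lceil p^{*}M+E\rceil-(K_{X'}+\Delta_{X'}+D)$ is, up to adding a fractional snc $p$-exceptional divisor to the boundary, numerically a nef and big $\mathbb R$-divisor, the Nadel-type (Kawamata--Viehweg--Ambro--Fujino) vanishing theorem gives $H^{1}(X',\mathcal J\otimes\mathcal O_{X'}(\lceil p^{*}M+E\rceil))=0$ for the adjoint ideal $\mathcal J$ with $\mathcal O_{X'}/\mathcal J$ supported at $x'$, so $\lceil p^{*}M+E\rceil$, and hence $M$, has a section nonzero at $x$, as desired.

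The hard part will be the tie-breaking and cut-down in the third paragraph. One must carry the subboundary $B_{X'}$ --- with its possibly negative, $p$-exceptional coefficients --- through the Riemann--Roch count and the construction of $V$ and of the adjoint ideal; one must make sure that the perturbation rationalizing $P$, together with the rounding $\lceil\,\cdot\,\rceil$, produces no new non-klt center through $x'$ (this is exactly where $x\notin\nqklt(X,\omega)$ is used); and one must show that the non-klt center $V$, if a curve, is not contracted by $p$, so that the hypothesis $N\cdot C\ge 2$ genuinely bounds $N\cdot p_{*}V$ from below. The thresholds $4$ and $2$ in the statement are precisely those forced by Riemann--Roch on $X'$ and by the one-dimensional case, so all of these estimates have to be performed sharply, with essentially no room to spare.
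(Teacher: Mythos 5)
This corollary is not proved in the paper you were given: the author explicitly defers its proof to the companion paper \cite{fujino-haidong3}, so there is no internal argument to compare yours against. That said, your strategy --- reduce via Theorem \ref{c-thm1.7} to a generalized polarized pair $K_{X'}+\Delta_{X'}+M_{X'}$ on a smooth projective surface and run a Reider/tie-breaking argument, with the thresholds $N^2>4$ and $N\cdot C\geq 2$ entering exactly through the Riemann--Roch multiplicity count and the cut-down of a one-dimensional non-klt center --- is precisely the intended use of Theorem \ref{c-thm1.7} (compare Remark \ref{c-rem1.9} and the proof of Corollary \ref{c-cor1.10}, which performs the same perturbation of $B_{X'}+M_{X'}$ into a klt boundary). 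The treatment of $x\in\Nqklt(X,\omega)$ by adjunction to a qlc stratum plus the vanishing theorem is also the standard mechanism from \cite[Chapter 6]{fujino-foundations}.

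Two steps in your reduction are genuinely incomplete as written. First, ``by Example \ref{c-ex1.8}(3) we may replace $X$ by its normalization'' does not stand on its own: $H^0(X,\mathcal O_X(M))$ and $H^0(X^\nu,\mathcal O_{X^\nu}(\nu^*M))$ differ in general, and a section on $X^\nu$ not vanishing at a point of $\nu^{-1}(x)$ need not descend to $X$. You must invoke the actual content of \cite[Theorem 1.1]{fujino-haidong} --- that the conductor of $\nu$ is a union of qlc strata contained in $\Nqklt(X,\omega)$ with $\Nqklt(X^\nu,\nu^*\omega)=\nu^{-1}(\Nqklt(X,\omega))$ --- handle points of the non-normal locus through the $\Nqklt$ case, and for the remaining points descend the section by matching it on the conductor via another application of the vanishing theorem. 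Second, in the reducible case your extension step needs $H^1(X'',\mathcal I_{X_1\cap X''}\otimes\mathcal O_{X''}(M))=0$, which requires $X_1\cap X''$ to be a union of qlc strata of $[X'',\omega|_{X''}]$; this is true, but it is a nontrivial structural property of quasi-log schemes (intersections of qlc strata are unions of qlc strata) that must be cited, not a formal consequence of $N$ being nef and log big. The cleaner route is to lift sections directly from the union $X''$ of the remaining components (itself a union of qlc strata) using $H^1(X,\mathcal I_{X''}\otimes\mathcal O_X(M))=0$. With these two points repaired, and with the care you yourself flag about keeping the rounding $\lceil\,\cdot\,\rceil$ and the $p$-exceptional part of $B_{X'}$ from creating new non-klt centers through $x'$, the argument should go through.
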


Corollary \ref{c-cor1.14} is a generalization of the result 
for semi-log canonical surfaces obtained in \cite{fujino-slc-surface}. 

\medskip 

We strongly recommend the reader 
to see \cite{fujino-haidong2}, \cite{fujino-haidong3}, \cite{fujino-fujisawa-liu},  
and \cite{fujino-wenfei2} after reading this paper. 

\begin{say}[Historical comments on 
related papers, and so on]\label{c-say1.15}
One of the starting points of this paper is 
Mori's work in \cite[Section 5, Part II]{mori}. 
It is a prototype of the so-called Fujino--Mori canonical bundle 
formula (see \cite{fujino-mori}). 
We note that \cite{fujino-mori} is an expanded version of 
Mori's unpublished preprint written and circulated around 1994. 
We also note that the moduli part is called the {\em{semistable 
part}} in \cite{fujino-mori}. 
In \cite[Theorem 2]{kawamata}, Kawamata 
essentially proved that the moduli part of a klt-trivial fibration is 
nef. 
After the author learned \cite[Theorem 2]{kawamata}, 
he soon got some applications of Kawamata's 
result in \cite{fujino-applications} and 
then obtained the so-called Fujino--Mori 
canonical bundle formula with Shigefumi Mori 
by combining Mori's unpublished preprint 
with \cite[Theorem 2]{kawamata}. 
Then the author discussed the semi-ampleness of semistable 
parts for certain algebraic fiber spaces in \cite{fujino-certain} 
and also proved that the semistable part 
behaves very well under pull-back in \cite[Section 4]{fujino-certain}. 
In \cite[Section 4]{fujino-higher-pre}, 
he essentially proved that the moduli part of an lc-trivial 
fibration is nef. 
This result is a direct generalization of \cite[Theorem 2]{kawamata}. 
From the Hodge theoretic viewpoint, 
\cite{kawamata} is {\em{pure}} and \cite[Section 4]{fujino-higher-pre} 
is {\em{mixed}}. We note that 
\cite[Sections 4 and 5]{fujino-higher-pre} was not published. 
If the author remembers correctly, 
he planned to divide \cite{fujino-higher-pre} into 
two papers following the editor's recommendation (see 
\cite[Remark 1.1]{fujino-higher}). 
On the other hand, Ambro started to study 
some applications of \cite[Theorem 2]{kawamata} 
in his thesis (see \cite{ambro1}) independently. 
Then he formulated lc-trivial fibrations, which are now called 
klt-trivial fibrations in this paper, 
and proved that the moduli part is b-nef (see \cite{ambro3}). 
His result recovers \cite[Theorem 2]{kawamata}. 
However, his proof is different from Kawamata's original one 
in \cite{kawamata} 
and is essentially the same as 
the arguments in \cite[Section 5, Part II]{mori} and 
\cite[Section 4]{fujino-certain}. 
Moreover, in \cite{ambro5}, Ambro proved that the moduli 
part of a klt-trivial fibration is {\em{b-nef and abundant}} 
under some mild assumptions. 
Note that this deep result was generalized 
for lc-trivial fibrations by \cite{fujino-gongyo}. 
More precisely, in \cite{fujino-gongyo}, 
Gongyo and the author showed how to reduce some problems for 
lc-trivial fibrations to those for klt-trivial fibrations. 
On the semi-ampleness, Kawamata essentially proved that 
the moduli part of an lc-trivial fibration is semi-ample when the 
dimension of general fibers is one in \cite{kawamata-sub1} 
(see also \cite{prokhorov-shokurov}). 
As we mentioned before, 
the b-semi-ampleness conjecture (see 
Conjecture \ref{c-conj1.4}) is still widely open. 
We recommend the reader to see \cite{fujino-some}, 
where the author discussed various topics around lc-trivial 
fibrations. 
Roughly speaking, in \cite{fujino-some}, 
the author formulated lc-trivial fibrations for K\"ahler manifolds 
and proved the finite generation of 
canonical rings for compact K\"ahler manifolds. 
We also recommend the reader to see \cite{fujino-zucker65} 
for a survey on some 
related topics. 
Finally, we note that Koll\'ar surveys lc-trivial 
fibrations in \cite{kollar1}. 
His treatment is slightly different from others. 

From 2006 to 2007, the author wrote a preprint \cite{fujino-lmmp}, 
where he obtained some generalizations of Koll\'ar's injectivity, 
vanishing, and torsion-free theorems by using the theory of 
mixed Hodge structures on cohomology with 
compact support. 
Note that a completely revised and expanded version of 
\cite{fujino-lmmp} is now published as Chapter 5 of \cite{fujino-foundations} 
(see also \cite{fujino-vanishing} and \cite{fujino-injectivity}). 
The main motivation of \cite{fujino-lmmp} 
is to establish some generalizations 
of Koll\'ar's theorems for the theory of 
quasi-log schemes introduced by Florin Ambro (see \cite{ambro2}). 
In 2009, he wrote a very preliminary version of \cite{fujino-fujisawa} 
and started a joint work with Taro Fujisawa. 
One of his motivations of \cite{fujino-fujisawa} 
is to formulate an ultimate 
generalization of the Fujita--Zucker--Kawamata 
semipositivity theorem and obtain some kind of canonical 
bundle formula for reducible varieties 
by using the theory of variations of mixed Hodge 
structure on cohomology with compact support. 
Soon after they released a preprint version of \cite{fujino-fujisawa} 
in 2012, the author 
got the projectivity of the coarse moduli spaces of stable 
varieties in \cite{fujino-projectivity} as an easy application of 
\cite{fujino-fujisawa}. 
He thought that the paper \cite{fujino-projectivity} was an 
important unexpected application of \cite{fujino-fujisawa} 
because everyone thought that the projectivity 
of the coarse moduli spaces of stable varieties had been 
already proved in \cite{kollar-projectivity}. 
Note that the main result of \cite{fujino-projectivity} 
now can be proved without using the theory of 
variations of mixed Hodge structure (see \cite{fujino-vani-semi}). 
The proof in \cite{fujino-vani-semi} 
uses the Koll\'ar--Ohsawa type vanishing theorem 
for simple normal crossing pairs. 

In this paper, we discuss a kind of canonical 
bundle formula for reducible varieties, which we call a 
basic slc-trivial firbration, as an application of 
\cite{fujino-fujisawa}. 
This paper relates the theory of variations of mixed Hodge 
structure on cohomology with compact support discussed in 
\cite{fujino-fujisawa} to the theory of 
quasi-log schemes established in \cite[Chapter 6]{fujino-foundations}. 
Therefore, the results in this paper will 
play a crucial role in the study of quasi-log schemes. 
\end{say}

We briefly explain the organization of this paper. 
In Section \ref{c-sec2}, we fix the notation and recall 
various basic results for the reader's convenience. 
Here we introduce the notion of potentially nef divisors 
and explain some basic properties. 
Section \ref{c-sec3} is a short section on the theory of variations of 
mixed Hodge structure on cohomology with compact support. 
We explain some results in \cite{fujino-fujisawa}. 
Note that Theorem \ref{c-thm3.1} is the main ingredient of this paper. 
Theorem \ref{c-thm3.1} is a generalization of 
the Fujita--Zucker--Kawamata semipositivity theorem. 
In Section \ref{c-sec4}, we introduce the notion of (pre-)basic 
slc-trivial fibrations, define discriminant $\mathbb Q$-b-divisors and 
moduli $\mathbb Q$-b-divisors, and study some basic properties. 
The notion of basic slc-trivial fibrations is a generalization of 
that of Ambro's lc-trivial fibrations. 
In Section \ref{c-sec5}, we treat an inversion of adjunction 
for pre-basic slc-trivial fibrations under some assumptions. 
Although we do not need the result in Section \ref{c-sec5} explicitly 
in this paper, the calculation 
in Section \ref{c-sec5} may help the reader understand 
Theorem \ref{c-thm1.7}. 
In Section \ref{c-sec6}, we take a cyclic cover of 
the generic fiber of a given basic slc-trivial fibration 
to construct a new pre-basic slc-trivial fibration. 
Then we interpret the moduli part of a given 
basic slc-trivial fibration Hodge theoretically. 
In Section \ref{c-sec7}, we discuss various covering lemmas 
essentially due to Yujiro Kawamata. 
We will use them in the subsequent sections. 
In Section \ref{c-sec8}, we prove that the moduli part of 
a basic slc-trivial fibration behaves very well under pull-back by 
generically finite surjective 
morphisms with some mild assumptions. 
Section \ref{c-sec9} is devoted to the proof of the main 
theorem:~Theorem \ref{c-thm1.2}. 
In Section \ref{c-sec10}, we treat normal irreducible quasi-log canonical 
pairs. By the main result in Section \ref{c-sec10}, we see that 
a normal irreducible quasi-log canonical pair with $\mathbb Q$-structure 
can be seen as a basic slc-trivial fibration. 
This fact is one of the main motivations to introduce the notion of 
basic slc-trivial fibrations. 
In Section \ref{c-sec11}, we prove Theorem \ref{c-thm1.7} 
as an application of Theorem \ref{c-thm1.2}. 
By this theorem, we see that normal irreducible 
quasi-log canonical pairs are similar to log canonical 
pairs. 
Section \ref{c-sec12} is a short section on a remark 
about the basepoint-free theorem 
for quasi-log canonical pairs. 
In the final section:~Section \ref{c-sec13}, 
we give some supplementary remarks on \cite{fujino-fujisawa}, which is 
one of the main ingredients of this paper, for the reader's 
convenience. 

\begin{ack}
The author was partially 
supported by JSPS KAKENHI Grant Numbers 
JP16H03925, JP16H06337. 
He would like to thank Yoshinori Gongyo, 
Wenfei Liu, Takeshi Abe, Kenta Hashizume, 
and Haidong Liu for discussions. 
He also would like to thank Professor Taro Fujisawa very 
much for useful discussions and advice, and for allowing him to 
include \cite{fujino-fujisawa3} in this paper. 
Finally, he thanks the referee very much for useful comments.  
\end{ack}

\begin{conventions}
We will work over $\mathbb C$, the complex number field, throughout 
this paper. We will freely use the basic 
notation of the minimal model program as in 
\cite{fujino-fund} and \cite{fujino-foundations}. 
A {\em{scheme}} means a separated scheme of 
finite type over $\mathbb C$. 
A {\em{variety}} means a reduced scheme, that is, 
a reduced separated scheme of finite type over $\mathbb C$. 
In this paper, a variety may be reducible. 
However, we sometimes assume that a variety is irreducible without 
mentioning it explicitly if there is no danger of confusion. 
The set of integers (resp.~rational numbers or real numbers) 
is denoted by $\mathbb Z$ (resp.~$\mathbb Q$ or $\mathbb R$). 
The set of nonnegative (resp.~positive) rational numbers 
is denoted by $\mathbb Q_{\geq 0}$ (resp.~$\mathbb Q_{>0}$). 
We use $\mathbb Z_{\geq 0}$, $\mathbb Z_{>0}$, 
$\mathbb R_{\geq 0}$, and $\mathbb R_{>0}$ similarly. 
\end{conventions}

\section{Preliminaries}\label{c-sec2}
In this section, we fix the notation and 
recall some basic results for the reader's convenience. 

\begin{say}[Divisors]\label{c-say2.1} 
Let $X$ be a scheme with structure sheaf $\mathcal O_X$ and let 
$\mathcal K_X$ be the sheaf of total quotient rings of $\mathcal O_X$. 
Let $\mathcal K^*_X$ denote the (multiplicative) 
sheaf of invertible elements in $\mathcal K_X$, 
and $\mathcal O^*_X$ the sheaf of invertible elements in $\mathcal O_X$. 
We note that $\mathcal O_X\subset \mathcal K_X$ and $\mathcal O^*_X\subset 
\mathcal K^*_X$ hold. 
A {\em{Cartier divisor}} $D$ on $X$ is a global section of 
$\mathcal K^*_X/\mathcal O^*_X$, that is, 
$D$ is an element of $\Gamma(X, \mathcal K^*_X/\mathcal O^*_X)$. 
A {\em{$\mathbb Q$-Cartier divisor}} (resp.~An 
{\em{$\mathbb R$-Cartier divisor}}) is an element of 
$\Gamma (X, \mathcal K^*_X/\mathcal O^*_X)\otimes 
_{\mathbb Z}\mathbb Q$ (resp.~$\Gamma (X, \mathcal K^*_X/\mathcal O^*_X)
\otimes _{\mathbb Z} \mathbb R$). 

Let $D_1$ and $D_2$ be two $\mathbb R$-Cartier divisors 
on $X$. Then $D_1$ is {\em{linearly}} 
(resp.~{\em{$\mathbb Q$-linearly}}, or {\em{$\mathbb R$-linearly}}) 
{\em{equivalent}} to $D_2$, denoted by $D_1\sim D_2$ (resp.~$D_1
\sim _{\mathbb Q}D_2$, or $D_1\sim _{\mathbb R} D_2$) if 
$$
D_1=D_2+\sum _{i=1}^k r_i (f_i)
$$ 
such that $f_i\in \Gamma (X, \mathcal K^*_X)$ and $r_i\in \mathbb Z$ 
(resp.~$r_i\in \mathbb Q$, or $r_i\in \mathbb R$) for 
every $i$. 
We note that $(f_i)$ is a {\em{principal Cartier divisor}} 
associated to $f_i$, that is, 
the image of $f_i$ by 
$$
\Gamma (X, \mathcal K^*_X)\to \Gamma(X, \mathcal K^*_X/\mathcal O^*_X). 
$$
Let $f:X\to Y$ be a morphism between schemes. 
If there exists an $\mathbb R$-Cartier 
(resp.~a $\mathbb Q$-Cartier) divisor $B$ on $Y$ such that 
$D_1\sim _{\mathbb R} D_2+f^*B$ (resp.~$D_1\sim _{\mathbb Q} 
D_2+f^*B$), then $D_1$ is said to be {\em{relatively $\mathbb R$-linearly 
{\em{(}}resp.~$\mathbb Q$-linearly{\em{)}} equivalent}} to $D_2$. 
It is denoted by $D_1\sim _{\mathbb R, f}D_2$ or 
$D_1\sim _{\mathbb R, Y} D_2$ (resp.~$D_1\sim 
_{\mathbb Q, f}D_2$ or $D_1\sim _{\mathbb Q, Y}D_2$). 

\medskip

From now on, let $X$ be an equidimensional scheme. We note 
that $X$ is not necessarily regular in codimension one. 
A ({\em{Weil}}) {\em{divisor}} $D$ on $X$ is a finite formal 
sum 
$$
D=\sum _i d_iD_i, 
$$
where $D_i$ is an irreducible reduced closed subscheme of $X$ 
of pure codimension one and $d_i$ is an integer 
for every $i$ such that $D_i\ne D_j$ for every $i\ne j$. 
If $d_i \in \mathbb Q$ (resp.~$d_i \in \mathbb R$) for every $i$, 
then $D$ is called a {\em{$\mathbb Q$-divisor}} 
(resp.~an {\em{$\mathbb R$-divisor}}). 
Let $D=\sum _i d_i D_i$ be an $\mathbb R$-divisor as above. 
We put 
\begin{equation*}
D^{\leq 1}=\sum _{d_i\leq 1}d_i D_i, \quad 
D^{<1} =\sum _{d_i<1}d_iD_i, \quad 
D^{= 1}=\sum _{d_i= 1} D_i, \quad \text{and} \quad
\lceil D\rceil =\sum _i \lceil d_i \rceil D_i, 
\end{equation*}
where $\lceil d_i\rceil$ is the integer defined by $d_i\leq 
\lceil d_i\rceil <d_i+1$. 
Moreover, we put $\lfloor D\rfloor =-\lceil -D\rceil$ and 
$\{D\}=D-\lfloor D\rfloor$. 
Let $D$ be an $\mathbb R$-divisor.  
We call $D$ a {\em{subboundary}} 
$\mathbb R$-divisor if $D=D^{\leq 1}$ holds. 
When $D$ is effective and $D=D^{\leq 1}$ holds, 
we call $D$ a {\em{boundary}} $\mathbb R$-divisor. 

We further assume that 
$f:X\to Y$ is a surjective morphism onto an irreducible 
variety $Y$. 
Then we put 
$$
D^v=\sum _{f(D_i)\subsetneq Y}d_i D_i \quad 
\text{and} \quad D^h=D-D^v, 
$$
and call $D^v$ the {\em{vertical part}} 
and $D^h$ the {\em{horizontal part}} of $D$ 
with respect to $f:X\to Y$, respectively. 
\end{say}

\begin{say}[Singularities of pairs]\label{c-say2.2}
A pair $(X, \Delta)$ consists of a normal variety $X$ and 
an $\mathbb R$-divisor $\Delta$ on $X$ such that 
$K_X+\Delta$ is $\mathbb R$-Cartier. 
A pair $(X, \Delta)$ is called {\em{sub kawamata log terminal}} 
(resp.~{\em{sub log canonical}}) if for any 
proper birational morphism 
$f:Y\to X$ from a normal variety $Y$, every coefficient 
of $\Delta_Y$ is $<1$ (resp.~$\leq 1$) where 
$$K_Y+\Delta_Y:=f^*(K_X+\Delta).$$ 
A pair 
$(X, \Delta)$ is called {\em{kawamata log terminal}} 
(resp.~{\em{log canonical}}) if $(X, \Delta)$ is sub kawamata log terminal 
(resp.~sub log canonical) and $\Delta$ is effective. 

Let $(X, \Delta)$ be a sub log canonical pair and let $W$ be a closed 
subset of $X$. 
Then $W$ is called a {\em{log canonical center}} of $(X, \Delta)$ if there 
exist  
a proper birational morphism 
$f:Y\to X$ from a normal variety $Y$ and a prime divisor 
$E$ on $Y$ such that $\mult _E\Delta_Y=1$ and $f(E)=W$. 

We note that $-\mult _E\Delta_Y$ is denoted by $a(E, X, \Delta)$ 
for any prime divisor $E$ on $Y$ 
and 
is called the {\em{discrepancy coefficient}} of $E$ with respect to $(X, \Delta)$. 

\medskip 

Let $X$ be a normal variety and let $\Delta$ 
be an $\mathbb R$-divisor on $X$ such that 
$K_X+\Delta$ is $\mathbb R$-Cartier. 
Under this assumption, we can define the discrepancy 
coefficient $a(E, X, \Delta)$ for any prime divisor 
$E$ {\em{over}} $X$ by taking a suitable resolution of singularities. 
The {\em{minimal log discrepancy}} of $(X, \Delta)$ in a closed 
subset $Z\subsetneq X$ is 
$$
\mld _Z(X, \Delta):=\inf _{c_X(E)\subset Z}a(E, X, \Delta)+1, 
$$ 
where $E$ is a prime divisor over $X$ and $c_X(E)$ is the center of 
$E$ on $X$. 

\medskip 

In this paper, we mainly treat reducible varieties. 
So we need the notion of ({\em{sub}}) {\em{semi-log canonical singularities}}. 
 
\begin{defn}[Semi-log canonical singularities]\label{c-def2.3}
Let $X$ be an equidimensional variety that 
satisfies Serre's $S_2$ condition and is normal crossing in 
codimension one. 
Let $\Delta$ be an $\mathbb R$-divisor on $X$ such that 
no irreducible component of $\Supp \Delta$ 
is contained in the singular locus 
of $X$ and that $K_X+\Delta$ is $\mathbb R$-Cartier. 
We say that $(X, \Delta)$ has only {\em{sub semi-log canonical}} 
({\em{sub slc}}, for short) singularities if $(X^\nu, \Delta_{X^\nu})$ 
is sub log canonical, where $\nu: X^\nu \to X$ is the normalization of $X$ 
and $K_{X^\nu}+\Delta_{X^\nu}=\nu^*(K_X+\Delta)$, 
that is, $\Delta_{X^\nu}$ is the sum of the inverse images of $\Delta$ and 
the conductor of $X$. 
An {\em{slc center}} of $(X, \Delta)$ is the $\nu$-image of an lc 
center of $(X^\nu, \Delta_{X^\nu})$. 
An {\em{slc stratum}} of $(X, \Delta)$ means either 
an slc center of $(X, \Delta)$ or an 
irreducible component of $X$. 
If $(X, \Delta)$ has only sub semi-log canonical 
singularities and $\Delta$ is effective, then we say that 
$(X, \Delta)$ has only {\em{semi-log canonical}} 
({\em{slc}}, for short) singularities. 
\end{defn}
If $(X, \Delta)$ is (sub) semi-log canonical 
and $X$ is normal, then $(X, \Delta)$ is (sub) log canonical 
by definition. 

\medskip 

For the details of semi-log canonical 
singularities, see \cite{fujino-slc} and \cite{kollar}. 
\end{say}

\begin{say}[Potentially nef divisors]\label{c-say2.4}
Let us introduce the notion of {\em{potentially nef}} divisors. 
It is indispensable for the main theorem of this paper:~Theorem \ref{c-thm1.2}. 

\begin{defn}[Potentially nef divisors]\label{c-def2.5}
Let $X$ be a normal 
irreducible variety and let $D$ be a divisor on $X$. 
If there exist a completion $\overline X$ of $X$, 
that is, $\overline X$ is a normal complete 
variety and contains $X$ as a dense Zariski open set, and 
a nef divisor $\overline D$ on $\overline X$ such that 
$D=\overline D|_X$, then $D$ is called 
a {\em{potentially nef}} divisor on $X$. 
A finite $\mathbb R_{>0}$-linear (resp.~$\mathbb Q_{>0}$-linear) 
combination of potentially nef divisors is called 
a {\em{potentially nef}} $\mathbb R$-divisor 
(resp.~$\mathbb Q$-divisor). 
\end{defn}

The following easy lemma is very important in some 
applications. 

\begin{lem}\label{c-lem2.6}
Let $X$ be a normal irreducible quasi-projective 
variety, let $D$ be a potentially nef divisor on $X$, 
and let $H$ be an ample 
divisor on $X$. 
Then $D+H$ is ample. 
\end{lem}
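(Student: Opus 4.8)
The plan is to unwind the definition of potentially nef and reduce the statement to the standard fact that the restriction of an ample divisor to a dense open subset, plus a nef divisor, behaves like an ample divisor after passing to a common completion. First I would write $D = \sum_{i} r_i D_i$ with $r_i \in \mathbb{R}_{>0}$ and each $D_i$ potentially nef, so that there is a completion $\overline{X}_i$ of $X$ and a nef divisor $\overline{D}_i$ on $\overline{X}_i$ with $D_i = \overline{D}_i|_X$. By taking the normalization of the closure of the diagonal embedding $X \hookrightarrow \prod_i \overline{X}_i$ (or, more simply, by repeatedly resolving/normalizing to dominate all the $\overline{X}_i$ simultaneously), I may assume all the $\overline{D}_i$ live on a single completion $\overline{X}$ of $X$, with $\overline{X}$ normal; pulling back the nef divisors keeps them nef. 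Thus $D$ extends to a nef $\mathbb{R}$-divisor $\overline{D} = \sum_i r_i \overline{D}_i$ on $\overline{X}$.

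Next I would deal with the ample divisor $H$. Since $X$ is quasi-projective, fix a projective completion and then, dominating it by $\overline{X}$ if necessary, assume $H = \overline{H}|_X$ for some $\mathbb{Q}$-Cartier divisor $\overline{H}$ on the projective variety $\overline{X}$. The point is that $\overline{H}$ need not be nef on all of $\overline{X}$ — it is only ample on the open set $X$ — so I cannot directly invoke Kleiman's criterion on $\overline{X}$. The key step is therefore to modify $\overline{H}$ near the boundary: choose an effective divisor $E$ supported on $\overline{X} \setminus X$ such that $\overline{H} + E$ is ample on $\overline{X}$ (possible because $X$ is quasi-projective, so $\overline{X} \setminus X$ supports an effective divisor whose complement is affine over the projective closure, and one can arrange ampleness of $\overline{H}+E$ by a standard argument with a very ample divisor and Serre vanishing). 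Then $\overline{D} + \overline{H} + E$ is the sum of a nef divisor and an ample divisor on the complete variety $\overline{X}$, hence ample on $\overline{X}$; restricting to $X$ and noting $E|_X = 0$ gives that $D + H$ is the restriction of an ample divisor, and since $X$ is open in $\overline{X}$, $D+H$ is ample on $X$.

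The main obstacle is the second paragraph: producing a completion $\overline{X}$ on which $H$ extends to a divisor that can be made ample after adding a boundary divisor, while simultaneously keeping the extension $\overline{D}$ of $D$ nef. One must be careful that the single completion dominating all the $\overline{X}_i$ is still such that $X$ sits inside it as a dense open set and that $H$ extends as a $\mathbb{Q}$-Cartier (indeed $\mathbb{R}$-Cartier) divisor there; this is where normality of $\overline{X}$ and the quasi-projectivity of $X$ are used. Once the geometry of the compactification is set up correctly, the conclusion is immediate from "nef $+$ ample $=$ ample" on a projective variety and the fact that ampleness is preserved under restriction to open subsets. \qed
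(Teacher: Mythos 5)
Your overall strategy---pass to a single completion on which $D$ extends to a nef divisor and $H$ extends to something that becomes ample after a modification supported on the boundary, then apply ``nef $+$ ample $=$ ample'' on the completion and restrict---is exactly the strategy of the paper's proof. (The decomposition $D=\sum_i r_iD_i$ is not needed here, since the lemma concerns a single potentially nef divisor, but it is harmless.) However, the step you yourself flag as the main obstacle is left with a justification that does not work. If you first fix an \emph{arbitrary} projective completion and an arbitrary $\mathbb Q$-Cartier extension $\overline H$ of $H$, there is no reason that some effective $E$ supported on $\overline X\setminus X$ makes $\overline H+E$ ample: ampleness must be tested against curves lying inside the boundary, where adding $E$ can hurt as much as it helps, and neither the existence of a boundary divisor with affine complement nor Serre vanishing addresses this. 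A second, smaller issue: the completion you build as the normalization of the closure of the diagonal in $\prod_i\overline X_i$ is only \emph{complete}, since the $\overline X_i$ are merely complete, whereas your final argument needs the dominating completion to be projective.

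The missing idea---and the way the paper argues---is to build the compactification out of $H$ itself. Replace $D$ and $H$ by $mD$ and $mH$ so that $H$ is very ample, embed $X\hookrightarrow\mathbb P^N$ by $|H|$, and let $X^\dag$ be the closure; then the hyperplane class is ample on the projective variety $X^\dag$ and restricts to $H$ on $X$. By the flattening theorem (see \cite[Lemma 2.2]{lut} and \cite[Th\'eor\`eme (5.2.2)]{raynaud-g}) one eliminates the indeterminacy of $X^\dag\dashrightarrow\overline X$ by blowing up an ideal sheaf supported in $X^\dag\setminus X$; normalizing yields a \emph{projective} birational morphism $\alpha:\widetilde X\to X^\dag$ that is an isomorphism over $X$, a morphism $\beta:\widetilde X\to\overline X$, and an effective divisor $E$ with $\Supp E\subset\widetilde X\setminus X$ such that $-E$ is $\alpha$-ample. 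Then $l\,\alpha^*\mathcal O_{X^\dag}(1)\otimes\mathcal O_{\widetilde X}(-E)$ is ample for $l\gg0$ and restricts to $\mathcal O_X(lH)$, and adding the nef divisor $l\beta^*\overline D$ preserves ampleness; restricting to $X$ gives that $\mathcal O_X(lD+lH)$, hence $D+H$, is ample. In short, what makes the boundary modification ample is the relative anti-ampleness of the exceptional divisor of a blow-up of the $|mH|$-closure, not a Serre-vanishing argument, and this is the precise point your sketch leaves unproved.
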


We give a detailed proof for the 
reader's convenience. 

\begin{proof}[Proof of Lemma \ref{c-lem2.6}]
It is sufficient to prove that the line bundle 
$\mathcal O_X(D+H)$ is ample. 
Therefore, by replacing $D$ and $H$ with $mD$ and $mH$ for some 
positive integer $m$, respectively, 
we may assume that $H$ is very ample (see, for example, 
\cite[Chapter II, Theorem 7.6]
{hartshorne}). 
Thus, there exists an embedding 
$i:X\hookrightarrow \mathbb P^N$ such that 
$\mathcal O_X(H)\simeq i^*\mathcal O_{\mathbb P^N}(1)$. 
Let $X^\dag$ be the closure of $X$ in $\mathbb P^N$. 
Let $\overline X$ be a completion of 
$X$ on which there is a nef divisor $\overline D$ such that 
$D=\overline D|_X$. 
By \cite[Lemma 2.2]{lut}, which is an easy application of 
the flattening theorem (see \cite[Th\'eor\`eme (5.2.2)]{raynaud-g}), 
we can take an ideal sheaf 
$\mathcal I$ on $X^\dag$ 
with $\Supp \mathcal O_{X^\dag}/\mathcal I\subset X^\dag \setminus X$ 
such that the blow-up of $X^\dag$ along 
$\mathcal I$ eliminates the indeterminacy of $X^\dag\dashrightarrow \overline X$. 
Therefore, by taking the normalization of the blow-up of 
$X^\dag$ along $\mathcal I$, we get a projective 
birational morphism $\alpha:\widetilde X\to X^\dag$, 
which is an isomorphism over $X$, 
from a normal variety $\widetilde X$ such that 
the induced birational map $\beta:\widetilde X\dashrightarrow \overline X$ 
is a morphism, and an effective divisor $E$ on $\widetilde X$ such 
that $\Supp E \subset \widetilde X\setminus X$ and $-E$ is $\alpha$-ample. 
Note that we can see $X$ as a Zariski open set of $\widetilde X$. 
$$
\xymatrix{
\overline X & & \\ 
X \ar@{^{(}->}[u]\ar@{^{(}->}[r]
& X^\dag\ar@{-->}[ul] & \widetilde X\ar[ull]_-\beta\ar[l]^-\alpha
}
$$
Therefore, we can construct an ample line bundle $\mathcal L$ on $\widetilde X$ 
such that 
$\mathcal L|_X\simeq \mathcal O_X(lH)$ for some 
positive integer $l$. We consider 
the nef divisor $\beta^*\overline D$ on $\widetilde X$. 
Since $\widetilde X$ is projective and $\mathcal L$ is an ample 
line bundle on $\widetilde X$, 
$\mathcal L\otimes \mathcal O_{\widetilde X} (l\beta^*\overline D)$ 
is ample. By restricting it to 
$X$, 
we obtain that $\mathcal O_X(lD+lH)$ is an ample 
line bundle on $X$. 
Thus, $\mathcal O_X(D+H)$ is ample. 
This is what we wanted. 
\end{proof}

We note that 
any Cartier divisor is potentially nef when $X$ is affine. 

\begin{lem}\label{c-lem2.7}
Let $X$ be a normal irreducible 
affine variety and let $D$ be a {\em{(}}not necessarily effective{\em{)}} 
Weil divisor on $X$ which is Cartier. 
Then there exist a normal irreducible 
projective variety $\overline X$ containing $X$ as a 
dense Zariski open set and a Weil divisor $\overline D$ on $\overline X$ such that 
$D=\overline D|_X$ and that $\mathcal O_{\overline X}(\overline D)$ is a very 
ample line bundle on $\overline X$. 
In particular, $D$ is potentially nef. 
\end{lem}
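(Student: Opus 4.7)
The plan is to construct $\overline{X}$ in three stages: first, compactify $X$ as a normal projective variety equipped with an ample divisor supported off $X$; second, modify it so the Zariski closure of $D$ becomes Cartier while leaving $X$ intact; third, add a large multiple of an ample boundary divisor to achieve very ampleness of the resulting Cartier divisor.

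For the first stage, I would embed $X$ as a closed subvariety of some affine space $\mathbb{A}^{N}$, take its projective closure $X^{\dagger}\subset \mathbb{P}^{N}$, and let $\nu\colon \widetilde{X}_{0}\to X^{\dagger}$ be the normalization. Then $\widetilde{X}_{0}$ is a normal projective variety containing $X$ as a dense Zariski open set, and $H_{0}:=\nu^{*}\mathcal{O}_{\mathbb{P}^{N}}(1)$ is an ample Cartier divisor on $\widetilde{X}_{0}$ whose support lies entirely in $\widetilde{X}_{0}\setminus X$, since $X\subset\mathbb{A}^{N}$ avoids the hyperplane at infinity. Extend $D$ to a Weil divisor $\overline{D}_{0}$ on $\widetilde{X}_{0}$ by taking Zariski closures of its irreducible components, so that $\overline{D}_{0}|_{X}=D$; this $\overline{D}_{0}$ need not be Cartier on $\widetilde{X}_{0}$.

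For the second stage, I would write $\overline{D}_{0}=\overline{D}_{0}^{+}-\overline{D}_{0}^{-}$ as a difference of effective Weil divisors with disjoint supports, successively blow up the fractional ideal sheaves $\mathcal{O}_{\widetilde{X}_{0}}(-\overline{D}_{0}^{+})$ and $\mathcal{O}_{\widetilde{X}_{0}}(-\overline{D}_{0}^{-})$, and then normalize. Since $D$ is Cartier on $X$, both ideals are already invertible there, so the resulting proper birational morphism $\mu\colon \overline{X}\to \widetilde{X}_{0}$ from a normal projective variety $\overline{X}$ is an isomorphism over $X$, and the strict transform $\overline{D}_{1}$ of $\overline{D}_{0}$ is a Cartier divisor on $\overline{X}$ with $\overline{D}_{1}|_{X}=D$.

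Finally, by the standard lemma on projective birational morphisms employed in the proof of Lemma \ref{c-lem2.6}, there exists an effective $\mu$-exceptional divisor $E$ on $\overline{X}$ with $-E$ being $\mu$-ample; for $n\gg 0$, the Cartier divisor $A:=n\mu^{*}H_{0}-E$ is ample on $\overline{X}$, and its support lies in $\overline{X}\setminus X$ since both $\mu^{*}H_{0}$ and $E$ are supported there. By Serre's theorem, $\overline{D}:=\overline{D}_{1}+mA$ is a very ample Cartier divisor for $m\gg 0$, and $\overline{D}|_{X}=\overline{D}_{1}|_{X}+mA|_{X}=D$ as Weil divisors. The main obstacle is the second stage: a generic resolution of singularities would typically alter $X$ itself since $X$ is only assumed normal, not smooth, so one must use the blow-up specifically tailored to $\overline{D}_{0}^{\pm}$, whose isomorphy over $X$ can then be verified directly from Cartierness of $D$ on $X$.
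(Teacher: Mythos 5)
Your overall architecture (compactify, make the closure of $D$ Cartier by a blow-up that is an isomorphism over $X$, then add a large multiple of an ample divisor supported on the boundary) is the same as the paper's, and your first and third stages are fine. The gap is in the second stage. You decompose $\overline{D}_0=\overline{D}_0^{+}-\overline{D}_0^{-}$ and blow up $\mathcal O_{\widetilde X_0}(-\overline D_0^{\pm})$, asserting that both ideals are invertible on $X$ ``since $D$ is Cartier on $X$.'' This is false in general: Cartierness of $D=D^{+}-D^{-}$ does not imply Cartierness of $D^{+}$ and $D^{-}$ separately. For instance, on the affine quadric cone $X=\{xy=z^2\}$ with rulings $L_1=\{x=z=0\}$ and $L_2=\{y=z=0\}$, the divisor $D=L_1-L_2=\operatorname{div}(x/z)$ is principal, hence Cartier, but neither $L_1$ nor $L_2$ is Cartier at the vertex. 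So your blow-ups would in general modify $X$ itself, and the resulting $\overline X$ would no longer contain $X$ as a dense Zariski open set.

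The paper avoids the positive/negative decomposition altogether: on the normalized projective closure $X_2$ it chooses an ample Cartier divisor $H^{\dag}$, takes $l\gg 0$ and an \emph{effective} Weil divisor $\Gamma\sim D_2+lH^{\dag}$, and blows up the single ideal sheaf $\mathcal O_{X_2}(-\Gamma)$. Since $\Gamma|_X$ differs from the Cartier divisor $D+lH^{\dag}|_X$ by a principal divisor, $\Gamma|_X$ is Cartier on $X$, so this blow-up really is an isomorphism over $X$; on the normalized blow-up the transform of $\Gamma$ is Cartier, and subtracting $l$ times the pullback of $H^{\dag}$ recovers a Cartier extension of $D$. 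Replacing your second stage by this ``twist by a large ample divisor and pass to a linearly equivalent effective divisor'' device makes the rest of your argument go through. (A smaller point: even where your blow-up is legitimate, it is the total transform of the center, not the strict transform, that becomes Cartier.)
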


\begin{proof}
We fix a closed embedding $X\subset \mathbb C^N$. 
Then we take the closure $X_1$ of $X$ in $\mathbb P^N$. 
Note that there exists a hyperplane $H$ on $\mathbb P^N$ such that 
\begin{equation}\label{c-eq2.1}
\Supp H|_{X_1}=X_1\setminus X. 
\end{equation} 
Let $X_2$ be the normalization of $X_1$. 
In this situation, we can see $X$ as a dense 
Zariski open set of $X_2$. 
Let $D_2$ be the closure of $D$ in $X_2$. 
We take an ample Cartier divisor $H^\dag$ on $X_2$ and a sufficiently 
large positive integer $l$. 
Then we can take an effective Weil divisor 
$\Gamma$ which is linearly equivalent to $D_2+lH^\dag$, that is, 
$\Gamma-D_2\sim lH^\dag$. 
We take the normalization of the blow-up of $X_2$ along the 
ideal sheaf $\mathcal O_{X_2}(-\Gamma)$. Then 
we get a projective 
birational morphism $p:X_3\to X_1$ from a normal variety 
$X_3$ and a Weil divisor $D_3$ on $X_3$ such that 
$p$ is an isomorphism 
over $X$ and that $D_3$ is a Cartier divisor 
satisfying 
$D=D_3|_X$. 
Note that we saw $X$ as a dense Zariski open set of $X_3$. 
As in the proof of Lemma \ref{c-lem2.6}, 
we take the normalization of the blow-up of $X_1$ along a suitable 
ideal sheaf on $X_1$ to eliminate the indeterminacy 
of $p^{-1}: X_1\dashrightarrow X_3$. 
Then we get the following commutative diagram 
$$
\xymatrix{
& \overline X\ar[dr]^-\beta\ar[dl]_-\alpha & \\ 
X_1\ar@{-->}[rr]_-{p^{-1}}& & X_3,  
}
$$ 
where $\alpha: \overline X\to X_1$ is a projective 
birational morphism from a normal irreducible variety $\overline X$ such that 
$\alpha$ is an isomorphism over $X$. 
By using \eqref{c-eq2.1}, 
we can construct an 
ample divisor $A$ on $\overline X$ with $\Supp A=\overline X\setminus X$. 
Of course, we saw $X$ as a dense Zariski open set of $\overline X$. 
We put $\overline D:=\beta^*D_3+mA$ for some sufficiently 
large positive integer $m$. 
Then $\overline D$ is very ample and $\overline D|_X=D$ by construction. 
Therefore, we see that $D$ is potentially nef. 
\end{proof}

We prepare one more easy lemma on ample divisors. 

\begin{lem}\label{c-lem2.8}
Let $f:X\to Y$ be a projective 
morphism between quasi-projective varieties. 
Let $D$ be an $f$-ample 
Cartier {\em{(}}resp.~$\mathbb Q$-Cartier or $\mathbb R$-Cartier{\em{)}} 
divisor on $X$ and let $H$ be an ample divisor 
on $Y$. 
Then $D+mf^*H$ is an ample 
divisor {\em{(}}resp.~$\mathbb Q$-divisor 
or $\mathbb R$-divisor{\em{)}} for every sufficiently 
large positive 
integer $m$. 
\end{lem}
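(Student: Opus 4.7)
The plan is to first dispose of the Cartier case by embedding $X$ into a projective bundle over $Y$, and then deduce the $\mathbb{Q}$-Cartier and $\mathbb{R}$-Cartier cases by elementary scaling and approximation.

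Assume first that $D$ is Cartier. Since $f$ is projective and $D$ is $f$-ample, some positive multiple $kD$ is $f$-very ample, so the natural surjection $f^{*}f_{*}\mathcal{O}_{X}(kD)\twoheadrightarrow \mathcal{O}_{X}(kD)$ yields a closed $Y$-embedding $\iota\colon X\hookrightarrow \mathbb{P}(\mathcal{E})$ with $\mathcal{E}:=f_{*}\mathcal{O}_{X}(kD)$ coherent on $Y$ and $\iota^{*}\mathcal{O}_{\mathbb{P}(\mathcal{E})}(1)\simeq \mathcal{O}_{X}(kD)$. Let $\pi\colon \mathbb{P}(\mathcal{E})\to Y$ denote the bundle projection. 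I would then show that $\mathcal{O}_{\mathbb{P}(\mathcal{E})}(1)\otimes \pi^{*}\mathcal{O}_{Y}(nH)$ is ample on $\mathbb{P}(\mathcal{E})$ for all $n\gg 0$. To see this, compactify $Y$ to a projective variety $\overline{Y}$ on which some multiple of $H$ extends to a very ample Cartier divisor (possible since some multiple of $H$ gives an embedding of $Y$ into some $\mathbb{P}^{N}$ by \cite{hartshorne}, Chapter II, Theorem 7.6), extend $\mathcal{E}$ to a coherent sheaf $\overline{\mathcal{E}}$ on $\overline{Y}$, and apply Serre's theorem to conclude that $\mathcal{O}_{\mathbb{P}(\overline{\mathcal{E}})}(1)\otimes \overline{\pi}^{*}\mathcal{O}_{\overline{Y}}(n\overline{H})$ is ample on the projective variety $\mathbb{P}(\overline{\mathcal{E}})$ for $n\gg 0$. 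Restricting to $X$ via $\iota$ gives that $kD + knf^{*}H$ is ample on $X$, and hence $D + nf^{*}H$ is ample (as $kE$ ample Cartier forces $E$ ample for integral $k>0$).

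For the $\mathbb{Q}$-Cartier case, pick $N\in\mathbb{Z}_{>0}$ so that $ND$ is Cartier, which is still $f$-ample; the Cartier case applied to $ND$ and $NH$ gives $ND + mNf^{*}H$ ample for $m\gg 0$, whence $D + mf^{*}H$ is ample as a $\mathbb{Q}$-Cartier divisor. For the $\mathbb{R}$-Cartier case, I would use the standard characterization that an $f$-ample $\mathbb{R}$-Cartier divisor is a finite $\mathbb{R}_{>0}$-linear combination $D = \sum_{i} r_{i}D_{i}$ of $f$-ample Cartier divisors $D_{i}$, apply the Cartier case to each $D_{i}$ to find a common integer $m_{0}$ working for all $i$ simultaneously, and then take the corresponding positive $\mathbb{R}$-combination; alternatively, one perturbs $D$ slightly to an $f$-ample $\mathbb{Q}$-Cartier divisor via the openness of the $f$-ample cone and combines the $\mathbb{Q}$-Cartier case with a small perturbation that preserves ampleness.

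The main obstacle, though routine, is verifying the ampleness of $\mathcal{O}_{\mathbb{P}(\mathcal{E})}(1)\otimes \pi^{*}\mathcal{O}_{Y}(nH)$ on the projective bundle over a quasi-projective base; the compactification argument sketched above reduces this to Serre vanishing. Once that input is granted, the $\mathbb{Q}$- and $\mathbb{R}$-Cartier extensions are purely formal bookkeeping.
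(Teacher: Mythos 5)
Your Cartier and $\mathbb Q$-Cartier steps are correct (the paper simply quotes the $\mathbb Q$-Cartier case as well known, so your projective-bundle-plus-compactification argument supplies more detail than the paper records, and the scaling reduction is fine). The genuine gap is in the $\mathbb R$-Cartier case, which is the only case the paper actually proves. Writing $D=\sum_i r_iD_i$ with each $D_i$ an $f$-ample Cartier divisor and choosing a common $m_0$ with each $D_i+m_0f^*H$ ample, the ``corresponding positive $\mathbb R$-combination'' is $\sum_i r_i(D_i+m_0f^*H)=D+\left(m_0\sum_i r_i\right)f^*H$, whose $f^*H$-coefficient is in general an irrational number, not the integer $m$ of the statement. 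You are therefore left with
\begin{equation*}
D+mf^*H=\sum_i r_i\left(D_i+m_0f^*H\right)+\left(m-m_0\sum_i r_i\right)f^*H,
\end{equation*}
and you must still show that adding a positive \emph{real} multiple of the merely nef divisor $f^*H$ to an ample $\mathbb R$-divisor again yields an ample $\mathbb R$-divisor. That is not automatic here: in this paper an ample $\mathbb R$-divisor means a finite $\mathbb R_{>0}$-linear combination of ample Cartier divisors (see Remark \ref{c-rem2.9}), and since $X$ is only quasi-projective, hence possibly non-complete, you cannot fall back on Kleiman's criterion or openness of the ample cone. Your alternative perturbation route hits the same obstruction when you try to absorb the small non-nef difference $D-D'$.

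This absorption is precisely the content of the paper's proof: it reduces to showing that $aA+bf^*H$ is ample for $a,b\in\mathbb R_{>0}$ and $A$ ample, and then decomposes $aA+bf^*H=\frac{c}{l}(A+lf^*H)+(dA+(b-c)f^*H)+\left(a-\frac{c}{l}-d\right)A$, where $A+lf^*H$ is ample, $0<c\ll 1$ is chosen with $b-c\in\mathbb Q_{>0}$, and $0<d\ll 1$ is rational, so that every summand is a positive real multiple of an ample Cartier or ample $\mathbb Q$-Cartier divisor. You need to supply this (or an equivalent) step to close the $\mathbb R$-Cartier case; everything else in your proposal is routine and correct.
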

\begin{proof}
If $D$ is $\mathbb Q$-Cartier, then it is well known that 
$D+mf^*H$ is ample for every sufficiently large positive integer $m$. 
When $D$ is an $f$-ample $\mathbb R$-divisor, 
we can write $D=\sum _{i=1}^k d_i D_i$ where $d_i\in \mathbb R_{>0}$ 
and $D_i$ is an $f$-ample divisor for every $i$. 
Then 
$$
D+mf^*H=\sum _{i=1}^k d_i (D_i+m_if^*H)+\left(m-\sum _{i=1}^k 
m_i d_i\right)f^*H, 
$$
where $m_i$ is a positive integer such that $D_i+m_if^*H$ is ample for every 
$i$. 
Therefore, it is sufficient to prove that $aA+bf^*H$ 
is ample, where $a$ and $b$ are positive real numbers 
and $A$ is an ample divisor on $X$. 
We fix a positive integer $l$ such that $A+lf^*H$ is 
ample. We take a positive real number $c$ such that $0<c\ll 1$ 
and $b-c\in \mathbb Q_{>0}$. 
Then we take a positive rational number $d$ with 
$0<d\ll 1$. 
In this situation, we can write 
$$
aA+bf^*H=\frac{c}{l}(A+lf^*H)+(dA+(b-c)f^*H)
+\left( a-\frac{c}{l}-d\right)A. 
$$ 
This means that $aA+bf^*H$ is ample. 
Thus, we obtain that $D+mf^*H$ 
always can be written as a finite 
$\mathbb R_{>0}$-linear 
combination of ample divisors on $X$ 
for every sufficiently large positive integer $m$. 
This is what we wanted. 
\end{proof}

We give some remarks on potentially nef divisors. 

\begin{rem}\label{c-rem2.9}
(1) Let $X$ be a normal irreducible 
variety and let $D$ be a potentially nef $\mathbb R$-divisor 
on $X$. 
Then $D\cdot C\geq 0$ for every complete 
integral curve $C$ on $X$. 
In particular, $D$ is $\pi$-nef for any proper morphism 
$\pi:X\to S$ onto a variety $S$. 

(2) Let $\pi:X\to S$ be a projective morphism 
from a normal quasi-projective irreducible variety onto 
a quasi-projective variety $S$. 
Let $D$ be a $\pi$-nef $\mathbb R$-divisor 
on $X$, let $A$ be a $\pi$-ample 
$\mathbb R$-divisor on $X$, 
and let $H$ be an ample divisor on $S$. 
Then, by Lemma \ref{c-lem2.8}, 
we can easily see that $D+A+m\pi^*H$ is an ample 
$\mathbb R$-divisor on $X$, 
that is, a finite $\mathbb R_{>0}$-linear 
combination of ample divisors on $X$, for 
every sufficiently large positive integer $m$. 
We note that $D+A$ is a $\pi$-ample $\mathbb R$-divisor 
on $X$.  
\end{rem}
\end{say}

\begin{say}[b-divisors]\label{c-say2.10} 
Let us quickly recall the notion of {\em{b-divisors}} introduced by 
Shokurov (see \cite[Section 1]{shokurov}). 
We note that a b-divisor was originally called a {\em{bi-divisor}} 
in \cite{shokurov}. 

Let $X$ be a normal variety and let $\Div X$ be the space of Weil 
divisors on $X$. A {\em{b-divisor}} on $X$ is an element: 
$$
\mathbf D\in \mathbf{Div} X =\lim_{Y\to X} \Div Y, 
$$ 
where the (projective) limit is taken over all proper birational 
morphism $f: Y\to X$ from a normal variety $Y$ under the 
pushforward homomorphism $f_*: \Div Y\to \Div X$. 
We can define {\em{$\mathbb Q$-b-divisors}} on $X$ similarly.  
If $\mathbf D=\sum d_\Gamma \Gamma$ is a ($\mathbb Q$-)b-divisor 
on a normal variety $X$ and $f:Y\to X$ is a proper birational morphism 
from a normal variety $Y$, then the {\em{trace}} of $\mathbf D$ on $Y$ is 
the ($\mathbb Q$-)divisor 
$$
\mathbf D_Y:=\sum_{\text{$\Gamma$ is a divisor on $Y$}}
d_\Gamma \Gamma. 
$$

\medskip 

The {\em{$\mathbb Q$-Cartier closure}} of a $\mathbb Q$-Cartier 
($\mathbb Q$-)divisor $D$ on a normal variety $X$ is the $\mathbb Q$-b-divisor 
$\overline D$ with trace 
$$
\overline D _Y=f^*D, 
$$ 
where $f:Y\to X$ is a proper birational morphism from 
a normal variety $Y$. 

\begin{defn}[Canonical b-divisor]\label{c-def2.11}
Let $X$ be a normal variety and let 
$\omega$ be a top rational differential 
form of $X$. 
Then $(\omega)$ defines a 
b-divisor $\mathbf K$. We call $\mathbf K$ 
the {\em{canonical b-divisor}} of $X$. 
\end{defn}

We need the following definition for Theorem \ref{c-thm1.2} and 
Conjecture \ref{c-conj1.4}. 

\begin{defn}
[b-potentially nef and b-semi-ample 
$\mathbb Q$-b-divisors, and $\mathbb Q$-b-Cartier 
divisors]\label{c-def2.12}
Let $X$ be a normal variety. 
A $\mathbb Q$-b-divisor $\mathbf D$ of $X$ 
is {\em{b-potentially nef}} 
(resp.~{\em{b-semi-ample}}) if there 
exists a proper birational morphism $X'\to X$ from a normal 
variety $X'$ such that $\mathbf D=\overline {\mathbf D_{X'}}$ 
and $\mathbf D_{X'}$ is potentially nef 
(resp.~semi-ample). 
A $\mathbb Q$-b-divisor $\mathbf D$ of $X$ is {\em{$\mathbb Q$-b-Cartier}} 
if there is a proper birational morphism $X'\to X$ from a normal 
variety $X'$ such that $\mathbf D=\overline{\mathbf D_{X'}}$. 
\end{defn}

\begin{lem}\label{d-lem2.13} 
Let $\mathbf D_1$ and $\mathbf D_2$ be $\mathbb Q$-b-divisors on 
a normal variety $X$. Assume that 
$$
\mathbf D_1=\mathbf D_2+r\overline{(g)}
$$ 
holds, where $\overline{(g)}$ is a $\mathbb Q$-Cartier 
closure of a principal Cartier divisor $(g)$ associated 
to $g\in \Gamma (X, \mathcal K^*_X)$ and $r$ is a rational 
number. In this situation, if $\mathbf D_1$ is $\mathbb Q$-b-Cartier, 
that is, $\mathbf D_1=\overline {\mathbf D_{1X'}}$ for some 
proper birational morphism $X'\to X$ from 
a normal variety $X'$, then $\mathbf D_2 =\overline 
{\mathbf D_{2X'}}$ holds. 
\end{lem}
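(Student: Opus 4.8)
The plan is to reduce the whole statement to a computation of traces on the single normal model $X'$ furnished by the hypothesis, using two soft facts: that a $\mathbb Q$-b-divisor on $X$ is the same thing as a $\mathbb Q$-b-divisor on $X'$ under the natural identification (because $h\colon X'\to X$ is proper birational), and that the $\mathbb Q$-Cartier closure operation is $\mathbb Q$-linear (being, trace by trace, pullback of divisors, which is additive). First I would observe that, since $(g)$ is a principal, hence Cartier, divisor on $X$, its $\mathbb Q$-Cartier closure is already ``computed on $X'$'': one has $\overline{(g)}_{X'}=h^{*}(g)$, and therefore $\overline{(g)}=\overline{\,h^{*}(g)\,}=\overline{\overline{(g)}_{X'}}$, because for any proper birational $p\colon Y\to X'$ the trace of $\overline{h^{*}(g)}$ on $Y$ is $p^{*}h^{*}(g)=(h\circ p)^{*}(g)$, which is exactly the trace $\overline{(g)}_{Y}$. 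More generally the $\mathbb Q$-Cartier closure of any $\mathbb Q$-Cartier divisor on $X$ is unchanged after pulling it back to $X'$ and re-closing.

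Next I would take traces of the given identity on $X'$. Since an equality of $\mathbb Q$-b-divisors may be tested on traces, $\mathbf D_{1}=\mathbf D_{2}+r\overline{(g)}$ yields $\mathbf D_{1X'}=\mathbf D_{2X'}+r\,h^{*}(g)$ on $X'$. By hypothesis $\mathbf D_{1}=\overline{\mathbf D_{1X'}}$, which in particular forces $\mathbf D_{1X'}$ to be $\mathbb Q$-Cartier; as $h^{*}(g)$ is Cartier, the divisor $\mathbf D_{2X'}=\mathbf D_{1X'}-r\,h^{*}(g)$ is $\mathbb Q$-Cartier and its $\mathbb Q$-Cartier closure $\overline{\mathbf D_{2X'}}$ is defined. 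Now, using $\mathbb Q$-linearity of the closure together with the first-paragraph identity $\overline{\,h^{*}(g)\,}=\overline{(g)}$ and the hypothesis $\overline{\mathbf D_{1X'}}=\mathbf D_{1}$, I compute
$$\overline{\mathbf D_{2X'}}=\overline{\,\mathbf D_{1X'}-r\,h^{*}(g)\,}=\overline{\mathbf D_{1X'}}-r\,\overline{\,h^{*}(g)\,}=\mathbf D_{1}-r\,\overline{(g)}=\mathbf D_{2},$$
which is exactly the assertion $\mathbf D_{2}=\overline{\mathbf D_{2X'}}$.

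This is essentially a bookkeeping argument with the definitions in \ref{c-say2.10}, so I do not anticipate a genuine obstacle; the one point that deserves a sentence of care is the claim, used in the first paragraph, that the $\mathbb Q$-Cartier closure of a divisor supported on $X$ itself can be read off on $X'$, i.e. $\overline{(g)}=\overline{\overline{(g)}_{X'}}$. This rests on the equality $\overline{(g)}_{X'}=h^{*}(g)$ (valid because $(g)$ is Cartier on $X$) and on the fact that the normal models dominating both $X$ and $X'$ are cofinal in the inverse system $\mathbf{Div}\,X=\lim_{Y\to X}\Div Y$, so that a $\mathbb Q$-b-divisor is determined by its traces on those models. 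Granting this, the two displayed chains of equalities close the proof.
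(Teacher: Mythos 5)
Your argument is correct and is exactly the intended one: the paper's own ``proof'' of Lemma \ref{d-lem2.13} consists of the single sentence ``It is obvious by definition,'' and your write-up is just the careful unpacking of that claim (traces on models dominating $X'$, which are cofinal; $\overline{(g)}_{X'}=h^{*}(g)$; linearity of the $\mathbb Q$-Cartier closure). Nothing further is needed.
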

\begin{proof}
It is obvious by definition. 
\end{proof}

For more details on b-divisors, see, 
for example, \cite[2.3.2 b-divisors]{corti}. 
\end{say}

\begin{say}[Simple normal crossing pairs]\label{d-say2.14}
In this paper, we will mainly treat simple normal crossing 
pairs. 

\begin{defn}\label{d-def2.15}
We say that the pair $(X, D)$ is {\em{simple normal crossing}} 
at a point $a\in X$ if $X$ has a Zariski open neighborhood 
$U$ of $a$ that can be embedded in a smooth 
variety $Y$, where $Y$ has a regular system of parameters 
$(x_1, \ldots, x_p, y_1, \ldots, y_r)$ at $a=0$ in 
which $U$ is defined by a monomial equation 
$$
x_1\cdots x_p=0
$$ 
and 
$$
D=\sum _{i=1}^r \alpha_i (y_i=0)|_U, \quad 
\alpha_i\in \mathbb R.
$$ 
We say that $(X, D)$ is a {\em{simple normal crossing pair}} 
if it is simple normal crossing at every point of $X$. 
If $(X, 0)$ is a simple normal crossing pair, then 
$X$ is called a {\em{simple normal crossing variety}}. 
If $(X, D)$ is a simple normal crossing pair and 
$D$ is reduced, then $D$ is called a {\em{simple 
normal crossing divisor}} on $X$. 
Let $(X, D)$ be a simple normal crossing pair 
such that $D=D^{\leq 1}$ holds. 
Then it is easy to see that $(X, D)$ is sub slc 
in the sense of Definition \ref{c-def2.3}. 
In this situation, we simply say that $W$ is 
a {\em{stratum}} of $(X, D)$ if $W$ is 
an slc stratum of $(X, D)$ in the sense of Definition 
\ref{c-def2.3}. We note that 
a stratum of a simple normal crossing variety $X$ 
means a stratum of a simple normal crossing pair 
$(X, 0)$. 
\end{defn}

Let $X$ be a simple normal crossing variety and 
let $\Delta$ be an $\mathbb R$-divisor 
on $X$ such that no irreducible component of $\Supp \Delta$ 
is contained in 
the singular locus of $X$ and that $K_X+\Delta$ is $\mathbb R$-Cartier. 
Let $Z$ be a closed subset $Z\subsetneq X$ such that 
$Z$ contains no stratum of $X$. 
Then we put 
$$
\mld _Z(X, \Delta):=\mld _{\nu^{-1}(Z)}(X^\nu, \Theta),  
$$ 
where $\nu:X^\nu\to X$ is the normalization and $K_{X^\nu}+\Theta
=\nu^*(K_X+\Delta)$, that is, 
$\Theta$ is the sum of the inverse images of $\Delta$ and the singular 
locus of $X$. We call $\mld _Z(X, \Delta)$ the {\em{minimal log discrepancy}} 
of $(X, \Delta)$ in a closed subset $Z$. We will 
use it in Theorem \ref{c-thm5.1} below. 

\medskip 

We close this section with a useful lemma. 
We will often use it in the subsequent sections 
without mentioning it explicitly. We note that the classical topology 
means the Euclidean topology in Lemma \ref{d-lem2.16}. 

\begin{lem}\label{d-lem2.16}
Let $(X, D)$ be a simple normal crossing pair with $\dim X=n$ and 
let $f:X\to Z$ be a morphism 
onto an $m$-dimensional 
smooth irreducible variety $Z$. 
Assume that every stratum of $(X, \Supp D)$ is smooth 
over $Z$. 
Let $a\in X$ be any closed point. 
Then we have the following local analytic description 
of $f:(X, D)\to Z$ in a neighborhood 
of $a\in X$. 
\begin{itemize}
\item[(i)] $U$ and $V$ are open neighborhoods of 
$a\in X$ and $f(a)\in Z$ in the classical 
topology, respectively. 
\item[(ii)] $W$ is an open set of $\mathbb C^{n+1}$ in 
the classical topology. 
\item[(iii)] $(z_1, \ldots, z_m)$ and $(z_1, \ldots, z_{n+1})$ are systems of 
local analytic coordinates of $V$ and $W$, respectively. 
\item[(iv)] $p:W\to V$ is the projection given by $(z_1, \ldots, z_{n+1})
\mapsto (z_1, \ldots, z_m)$. 
\item[(v)] $U$ is defined by a monomial equation $z_{m+1}\cdots z_{m+p}=0$ 
in $W$ and $a=(0, \ldots, 0)\in W$. 
\item[(vi)] $D|_U=\sum _{i=1}^r\alpha_i (z_{m+p+i}=0)|_U$ with 
$\alpha_i\in \mathbb R$. 
\item[(vii)] $f|_U=p\circ \iota$, where 
$\iota$ is the natural closed embedding $U\hookrightarrow 
W$. 
\end{itemize}
$$
\xymatrix{U \ar@{^{(}->}[r]^-\iota\ar[dr]_-{f|_U}& W \ar[d]^-p\\ 
& V
}
$$
Let $\rho:Z'\to Z$ be a morphism from a smooth irreducible variety $Z'$. 
We put $X'=X\times _Z Z'$ and consider the following 
commutative diagram. 
$$
\xymatrix{
X \ar[d]_-f& X'\ar[d]^-{f'} \ar[l]_-{\rho'}\\ 
Z & Z'\ar[l]^-{\rho}
}
$$ 
Let $D'$ be the pull-back of $D$ on $X'$ by $\rho'$. 
Then we can easily see that $(X', D')$ is a simple normal 
crossing pair and every stratum of $(X', \Supp D')$ is smooth 
over $Z'$ by 
the above local analytic description of $f:(X, D)\to Z$. 
\end{lem}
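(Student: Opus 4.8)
The plan is to read off the asserted local analytic model directly from Definition \ref{d-def2.15}, replacing the ambient coordinates there by ones adapted to $f$, and then to obtain the base-change statement from that model. First I would fix the closed point $a\in X$ and apply Definition \ref{d-def2.15}: there is a Zariski-open neighbourhood $U\ni a$ in $X$ embedded as a closed subscheme in a smooth variety $Y$ with a regular system of parameters $(x_1,\dots,x_p,y_1,\dots,y_{r_0})$ at $a=0$, such that $U=\{x_1\cdots x_p=0\}$ and $D|_U=\sum_{i=1}^{r_0}\alpha_i(y_i=0)|_U$; since $U$ is a hypersurface in $Y$ and $\dim X=n$, we have $p+r_0=\dim Y=n+1$, and after relabelling we may assume $\alpha_i\ne 0$ precisely for $i\le s$, so that the components of $\Supp D$ through $a$ are the $\{y_i=0\}|_U$ with $i\le s$. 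Next I would choose a regular system of parameters $(u_1,\dots,u_m)$ of $Z$ at $f(a)$ and lift each $f^*u_k$ to some $g_k\in\mathcal O_{Y,a}$, which is possible because $\mathcal O_{Y,a}\twoheadrightarrow\mathcal O_{U,a}$; automatically $g_k(a)=0$.

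The crucial observation is that the minimal stratum $S$ of $(X,\Supp D)$ through $a$ is cut out near $a$ by $x_1=\dots=x_p=y_1=\dots=y_s=0$, hence is smooth of dimension $n+1-p-s$; by hypothesis $S$ is smooth over $Z$, which forces $n+1-p-s\ge m$ and, more to the point, says that the differentials $dg_1,\dots,dg_m$ are linearly independent at $a$ modulo $\langle dx_1,\dots,dx_p,dy_1,\dots,dy_s\rangle\subseteq T^*_aY$. Consequently $dg_1,\dots,dg_m,dx_1,\dots,dx_p,dy_1,\dots,dy_s$ are linearly independent in $T^*_aY$, and since $dy_{s+1},\dots,dy_{r_0}$ induce a basis of $T^*_aY/\langle dx_1,\dots,dx_p,dy_1,\dots,dy_s\rangle$, I can complete the family to a basis of $T^*_aY$ by adjoining $n+1-m-p-s$ of the $y_j$ with $j>s$; relabelling these as $y_{s+1},\dots,y_{r_0-m}$, the tuple $(g_1,\dots,g_m,x_1,\dots,x_p,y_1,\dots,y_{r_0-m})$ is again a regular system of parameters of $Y$ at $a$. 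Passing to the classical topology, these parameters identify a neighbourhood of $a$ in $Y$ with an open set $W\subseteq\mathbb C^{n+1}$ carrying coordinates $z_k:=g_k$ for $k\le m$, $z_{m+i}:=x_i$ for $1\le i\le p$, and $z_{m+p+i}:=y_i$ for $1\le i\le r_0-m=n+1-m-p$; choosing a neighbourhood $V$ of $f(a)$ in $Z$ with coordinates $z_1=u_1,\dots,z_m=u_m$ and shrinking $U$, $V$ and $W$, I obtain conditions (i)--(vii), the identity $f|_U=p\circ\iota$ holding because $z_k|_U=g_k|_U=f^*u_k$ for $k\le m$.

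For the base-change statement I would work pointwise over $X'=X\times_ZZ'$. Given $b\in X'$ lying over $a\in X$ and $a'\in Z'$, choose a neighbourhood $V'$ of $a'$ in $Z'$ with $\rho(V')\subseteq V$ and put $U':=U\times_VV'\subseteq W':=W\times_VV'$. Since $p\colon W\to V$ is an open immersion into $\mathbb C^{n+1}$ followed by a coordinate projection, it is smooth, so $W'\to V'$ is smooth and $W'$ is a smooth variety; the coordinates $z_{m+1},\dots,z_{n+1}$ pulled back from $W$, together with analytic coordinates on $V'$, form a local analytic coordinate system on $W'$ at $b$, in which $U'=\{z_{m+1}\cdots z_{m+p}=0\}$, $D'|_{U'}=\sum_i\alpha_i(z_{m+p+i}=0)|_{U'}$, and $f'|_{U'}$ is the projection to $V'$. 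This is again a model of the present lemma, now over the base $Z'$; hence $X'$ is reduced (it is cut out in the smooth $W'$ by the squarefree monomial $z_{m+1}\cdots z_{m+p}$), $(X',D')$ is a simple normal crossing pair, and every stratum of $(X',\Supp D')$ is a coordinate linear subspace involving none of the $V'$-coordinates, so the projection to $Z'$ restricts to a submersion on it and the stratum is smooth over $Z'$. Note that, since $p$ is smooth, nothing beyond smoothness of $Z'$ is required of $\rho$.

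The only genuinely substantive point in all of this is the transversality step in the second paragraph: correctly identifying the minimal stratum through $a$ with the coordinate subspace cut out by $x_1,\dots,x_p,y_1,\dots,y_s$, translating its smoothness over $Z$ into the precise linear-independence statement for $dg_1,\dots,dg_m$, and verifying that the coordinates needed to complete the regular system of parameters can be chosen among the leftover $y_j$'s, so that the normal forms of both $U$ and $D$ are preserved. Once that is in hand, both the normal form (i)--(vii) and its behaviour under base change are essentially formal, the rest being bookkeeping of dimensions.
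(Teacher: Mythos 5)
Your argument is correct and follows the same route as the paper's proof: embed $(X,D)$ locally into a smooth ambient $Y$, extend $f$ to a holomorphic map near $a$ (the paper cites Fischer for the analytic extension $W\to Z$; you lift the coordinate functions $f^*u_k$ through $\mathcal O_{Y,a}\twoheadrightarrow\mathcal O_{U,a}$, which amounts to the same extension), and then use smoothness over $Z$ of the minimal stratum through $a$ to produce adapted coordinates, after which the normal form and the base-change statement are formal. Your second paragraph makes explicit the transversality computation that the paper compresses into the single clause ``we obtain the desired local analytic description by shrinking $W$ suitably around $a$.''
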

\begin{proof}
By definition, $X$ is Zariski locally a simple normal crossing divisor 
on a smooth variety $Y$ in a neighborhood of 
$a\in X$ (see Definition \ref{d-def2.15}). 
By taking a small open set $W$ of $Y$ containing 
$a$ in the classical topology, $f|_U:U\to Z$, where 
$U:=X\cap W$, 
extends to a holomorphic map $W\to Z$ (see, for example, 
\cite[0.22.~Corollary 2]{fischer}). 
Since every stratum of $(X, \Supp D)$ is smooth over $Z$ by assumption, 
we obtain the desired local analytic description 
by shrinking $W$ suitably around $a$ and 
taking a small open neighborhood $V$ of $f(a)$ in $Z$ in the classical 
topology. By this local analytic description, 
we can easily see that $f: (X, D)\to Z$ behaves well under base change. 
\end{proof}
\end{say}

\section{Variations of mixed Hodge structure}\label{c-sec3} 

In this section, let us quickly recall the main result of \cite{fujino-fujisawa} 
(see also \cite{ffs}). We note that Theorem 
\ref{c-thm3.1} is the main ingredient of 
Theorem \ref{c-thm1.2}. 
Theorem \ref{c-thm3.1} follows from the 
theory of variations of mixed Hodge structure on cohomology 
with compact support. 

\begin{thm}[{\cite[Theorems 7.1 and 7.3]{fujino-fujisawa}}]
\label{c-thm3.1}
Let $(X, D)$ be a simple normal crossing pair such that 
$D$ is reduced and let $f:X\to Y$ be a projective 
surjective morphism onto a smooth variety $Y$ such that 
every stratum of $(X, D)$ is dominant onto $Y$. 
Assume that there exists a simple normal crossing 
divisor $\Sigma_Y$ on $Y$ such that every stratum of $(X, D)$ is smooth 
over $Y^*=Y\setminus \Sigma_Y$. 
Then we have 
\begin{itemize}
\item[(i)] $f_*\omega_{X/Y}(D)$ is a locally free sheaf on $Y$. 
\end{itemize} 
We further assume that all the local monodromies 
on the local system 
$R^d(f|_{X^*})_*\iota_!\mathbb Q_{X^*\setminus 
D^*}$ around $\Sigma_Y$ are unipotent, where $d=\dim X-\dim Y$, 
$X^*=f^{-1}(Y^*)$, $D^*=D|_{X^*}$, and $\iota:X^*\setminus D^*
\hookrightarrow X^*$. 
Then we have the following properties. 
\begin{itemize}
\item[(ii)] $\left(f_*\omega_{X/Y}(D)\right)|_V$ 
is a nef locally free sheaf 
on $V$, where $V$ is any complete subvariety of $Y$. 
\item[(iii)] Let $\rho:Y'\to Y$ be a morphism 
from a smooth 
variety $Y'$ such that $\rho^{-1}(\Sigma_Y)$ is a simple 
normal crossing divisor on $Y'$. 
Let $(X', D')$ be a simple normal crossing pair 
and let $f':X'\to Y'$ be a projective 
surjective morphism onto $Y'$ such that 
$f': (X', D')\to Y'$ is nothing but the base change of $f:(X, D)\to 
Y$ by $\rho:Y'\to Y$ over $Y\setminus \Sigma_Y$ and 
that every stratum of $(X', D')$ is dominant onto $Y'$. 
Then there exists a natural isomorphism 
$\rho^*(f_*\omega_{X/Y}(D))\simeq 
f'_*\omega_{X'/Y'}(D')$ of locally free sheaves 
which extends the base change 
isomorphism over $Y\setminus \Sigma_Y$. 
\end{itemize}
\end{thm}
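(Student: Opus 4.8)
\medskip

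The plan is to deduce (i)--(iii) entirely from the theory of graded-polarizable \emph{variations of mixed Hodge structure} (VMHS) on cohomology with compact support, in the form worked out in \cite{fujino-fujisawa}. First I would work over $Y^*=Y\setminus \Sigma_Y$: since every stratum of $(X,D)$ is smooth over $Y^*$, each $R^q(f|_{X^*})_*\iota_!\mathbb Q_{X^*\setminus D^*}$ is a local system on $Y^*$, and for $q=d$ its fibre at a point $y$ is $H^d_c(X^*_y\setminus D^*_y,\mathbb Q)$ equipped with Deligne's mixed Hodge structure; these assemble into a VMHS on $Y^*$ that is holomorphic, satisfies Griffiths transversality, and is graded-polarizable. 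The key translation is then to identify $(f_*\omega_{X/Y}(D))|_{Y^*}$ with the deepest nonzero piece $\mathcal F$ of the Hodge filtration of this VMHS; this comes from combining relative Grothendieck--Serre duality with the degeneration at $E_1$ of the spectral sequence of the relative logarithmic de Rham complex of $(X^*,D^*)$ over $Y^*$, both of which are part of the package established in \cite{fujino-fujisawa}.

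Granting this dictionary, parts (i) and (iii) become formal properties of canonical extensions. For (i): after a simple normal crossing modification over $\Sigma_Y$, which by the usual coefficient bookkeeping leaves $f_*\omega_{X/Y}(D)$ unchanged, one reduces to the case where $D+f^{-1}(\Sigma_Y)_{\mathrm{red}}$ is a simple normal crossing divisor; then $f_*\omega_{X/Y}(D)$ is the canonical extension (in Deligne's sense) of $\mathcal F$ across $\Sigma_Y$, and local freeness is just the local freeness of the canonical extension of the Hodge filtration of an admissible VMHS. (No monodromy hypothesis enters here.) For (iii): over $Y^*$ the asserted isomorphism is cohomology-and-base-change, available because $f_*\omega_{X/Y}(D)$ is locally free by (i) and $\rho$ restricts to the good locus; to extend it across $\rho^{-1}(\Sigma_Y)$ one uses that the pulled-back local monodromies remain unipotent (they are products of powers of the original unipotent ones along a simple normal crossing divisor), so $\rho^*$ of Deligne's unipotent canonical extension is again the canonical extension of the pulled-back VMHS. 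Identifying both $\rho^*(f_*\omega_{X/Y}(D))$ and $f'_*\omega_{X'/Y'}(D')$ with this common canonical extension — the latter via the argument of (i) applied to $f'$ — extends the base-change isomorphism.

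The crux, and the step that I expect to carry essentially all of the difficulty, is the nefness in (ii). I would first reduce to $V=C$ a smooth projective curve: nefness of a locally free sheaf on a complete variety is tested on curves and is stable under pull-back, so it suffices to pull $f_*\omega_{X/Y}(D)$ back along the normalization of an arbitrary curve in $V$. Along such a curve the pulled-back VMHS again has unipotent local monodromies (powers and products of the original unipotent ones), $(f_*\omega_{X/Y}(D))|_C$ is the canonical extension of its deepest Hodge bundle by (i) and (iii), and $C$ is already complete. It then remains to prove that the canonical extension of the deepest Hodge bundle of a graded-polarizable VMHS over a punctured curve, with unipotent monodromy at the punctures, is nef. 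This is the genuinely Hodge-theoretic input: in the pure case it is the Fujita--Zucker--Kawamata semipositivity theorem (proved via Schmid's norm estimates for the Hodge metric near a puncture together with the Griffiths curvature inequality for the deepest Hodge subbundle), and the extension to the mixed, compact-support setting is exactly \cite[Theorems 7.1 and 7.3]{fujino-fujisawa}. Accordingly, for the present paper I would simply invoke \cite[Theorems 7.1 and 7.3]{fujino-fujisawa} for (i)--(iii), after recording the identification of $f_*\omega_{X/Y}(D)$ with the deepest Hodge bundle explained above.
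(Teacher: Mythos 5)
Your proposal is correct and follows essentially the same route as the paper: both identify $f_*\omega_{X/Y}(D)$ with the (upper) canonical extension of a Hodge bundle of the admissible graded-polarizable variation of mixed Hodge structure on compact-support cohomology and then invoke \cite[Theorems 7.1 and 7.3]{fujino-fujisawa} for local freeness, semipositivity, and compatibility with base change. The only nuance is that the paper realizes $f_*\omega_{X/Y}(D)$ as the extension of $\left(\Gr^0_F(\mathcal V^d_{Y^*})\right)^*$, i.e.\ the Serre dual of $R^df_*\mathcal O_X(-D)$, rather than literally as the deepest Hodge subbundle of the compact-support VMHS itself; this is the same identification you describe via Grothendieck--Serre duality, just stated on the dual side.
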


We sketch the proof of Theorem \ref{c-thm3.1} 
for the reader's convenience. 
The details are contained in \cite{fujino-fujisawa} (see also 
Section \ref{c-sec13} for some supplementary remarks). 

\begin{proof}[Sketch of proof]
By \cite[Theorem 4.15]{fujino-fujisawa}, 
the local system 
$R^d(f|_{X^*})_*\iota_!\mathbb Q_{X^*\setminus D^*}$ 
underlies a graded polarizable 
variation of $\mathbb Q$-mixed Hodge structure on $Y^*$. 
Moreover, it is admissible (see, for example, 
\cite[Definition 3.11]{fujino-fujisawa}). 
We put 
$$\mathcal V^d_{Y^*}=R^d(f|_{X^*})_*\iota_!\mathbb Q_{X^*\setminus 
D^*}\otimes \mathcal O_{Y^*}. 
$$ 
Let 
$$
\cdots \subset F^{p+1}(\mathcal V^d_{Y^*})
\subset F^p(\mathcal V^d_{Y^*})
\subset F^{p-1}(\mathcal V^d_{Y^*})\subset \cdots
$$ 
be the Hodge filtration. 
By \cite[Theorem 7.3 (b)]{fujino-fujisawa}, 
we obtain that $f_*\omega_{X/Y}(D)$ is 
isomorphic to the upper canonical extension 
of 
$$
\left(\Gr^0_F(\mathcal V^d_{Y^*})\right)^*
=\mathcal {H}om_{\mathcal O_{Y^*}}\!
\left(\Gr^0_F(\mathcal V^d_{Y^*}), \mathcal O_{Y^*}\right). 
$$ 
In particular, $f_*\omega_{X/Y}(D)$ is a locally free sheaf on $Y$. 
For the details of the (upper) canonical extensions of Hodge 
bundles, see \cite[Remark 7.4]{fujino-fujisawa}. 
Hence, we get (i). 
When all the local monodromies on the local 
system $R^d(f|_{X^*})_*\iota_!\mathbb Q_{X^*\setminus 
D^*}$ around $\Sigma_Y$ are unipotent, 
$f_*\omega_{X/Y}(D)$ is the canonical extension of 
$$
\left(\Gr^0_F(\mathcal V^d_{Y^*})\right)^*
=\mathcal {H}om_{\mathcal O_{Y^*}}
\!\left(\Gr^0_F(\mathcal V^d_{Y^*}), \mathcal O_{Y^*}\right). 
$$
Therefore $f_*\omega_{X/Y}(D)\simeq 
\left(\Gr^0_F(\mathcal V^d_Y)\right)^*$, 
where $\Gr^0_F(\mathcal V^d_Y)$ is the canonical 
extension of 
$$
\Gr^0_F(\mathcal V^d_{Y^*})=F^0(\mathcal V^d_{Y^*})
/F^1(\mathcal V^d_{Y^*}). 
$$ 
Note that $\Gr^0_F(\mathcal V^d_Y)$ is isomorphic to $R^df_*\mathcal 
O_X(-D)$ by 
\cite[Theorem 7.1 (2)]{fujino-fujisawa}. 
Thus we obtain that $\left(f_*\omega_{X/Y}(D)\right)|_V$ is 
a nef locally free sheaf on $V$ for any complete subvariety $V$ of $Y$ (see
\cite[Remark 5.22, Corollary 5.23, and Theorem 7.1 (4)]
{fujino-fujisawa}). So we get (ii). 
As we saw above, $f_*\omega_{X/Y}(D)$ can be 
characterized by using canonical extensions of Hodge 
bundles. We note that canonical extensions 
of Hodge bundles behave well 
under pull-back by 
$\rho:Y'\to Y$ such that $\rho^{-1}(\Sigma_Y)$ is a simple 
normal crossing divisor on $Y'$. 
More precisely, we see that the pull-back of 
$\left(\Gr^0_F(\mathcal V^d_Y)\right)^*$ is isomorphic 
to the canonical extension of the pull-back 
of $\left(\Gr^0_F(\mathcal V^d_{Y^*})\right)^*$. 
Therefore, we get a natural 
isomorphism $\rho^*\left(f_*\omega_{X/Y}(D)\right)
\simeq f'_*\omega_{X'/Y'}(D')$, 
which is nothing but (iii). 
\end{proof}
\begin{rem}\label{c-rem3.2} 
In Theorem 
\ref{c-thm3.1}, the 
same results hold for $R^if_*\omega_{X/Y}(D)$ for 
every $i$. We only treat the case where $i=0$ since 
it is sufficient for our purposes in this paper. 
For the details of the cases where $i\ne 0$, 
see \cite{fujino-fujisawa}. 
\end{rem}

We recommend the interested reader to see \cite{fujino-fujisawa} 
for the details of Theorem \ref{c-thm3.1}. 

\section{Basic slc-trivial fibrations}\label{c-sec4} 
In this section, we introduce the notion of {\em{$($pre-$)$basic slc-trivial 
fibrations}} and define {\em{discriminant $\mathbb Q$-b-divisors}} and 
{\em{moduli $\mathbb Q$-b-divisors}} for (pre-)basic 
slc-trivial fibrations. 

\medskip 

Let us start with the definition of (pre-)basic slc-trivial fibrations. 

\begin{defn}[Basic slc-trivial fibration]\label{c-def4.1}
A {\em{pre-basic slc-trivial fibration}} $f:(X, B)\to Y$ consists of 
a projective surjective morphism 
$f:X\to Y$ and a simple normal crossing pair $(X, B)$ satisfying 
the following properties: 
\begin{itemize}
\item[(1)] $Y$ is a normal irreducible variety, 
\item[(2)] every stratum of $X$ is dominant onto $Y$ and 
$f_*\mathcal O_X\simeq \mathcal O_Y$, 
\item[(3)] $B$ is a $\mathbb Q$-divisor such that $B=B^{\leq 1}$ holds 
over 
the generic point of $Y$, and 
\item[(4)] there exists 
a $\mathbb Q$-Cartier $\mathbb Q$-divisor $D$ on $Y$ such that 
$$
K_X+B\sim _{\mathbb Q}f^*D. 
$$ 
\end{itemize}
If a pre-basic slc-trivial fibration $f:(X, B)\to Y$ also satisfies 
\begin{itemize}
\item[(5)] $\rank f_*\mathcal O_X(\lceil -B^{<1}\rceil)=1$, 
\end{itemize}
then it is called a {\em{basic slc-trivial fibration}}. 
\end{defn}

Before we study basic slc-trivial fibrations, we make a remark on 
lc-trivial fibrations and klt-trivial fibrations for the reader's convenience. 

\begin{rem}[Lc-trivial fibrations and klt-trivial fibrations]\label{c-rem4.2} 
Let $f:(X, B)\to Y$ be a basic slc-trivial fibration. 
Roughly speaking, if $X$ is irreducible and 
$(X, B)$ is sub log canonical (resp.~sub kawamata log 
terminal) over the generic point of $Y$, 
then $f:(X, B)\to Y$ is called an {\em{lc-trivial fibration}} 
(resp.~a {\em{klt-trivial fibration}}). 
We note that a klt-trivial fibration is called an lc-trivial fibration 
in \cite{ambro3} (see \cite[Definition 2.1]{ambro3}). 
For the details, see \cite[Definitions 3.1 and 3.2]{fujino-gongyo}.   
\end{rem}

The notion of basic slc-trivial fibrations is a 
generalization of that of lc-trivial fibrations. 

\begin{say}[Induced (pre-)basic slc-tirival fibrations]\label{c-say4.3}
Let $f:(X, B)\to Y$ be a (pre-)basic slc-trivial fibration 
and let $\sigma:Y'\to Y$ be a generically finite surjective 
morphism from a normal irreducible variety $Y'$. 
Then we have an {\em{induced {\em{(}}pre-{\em{)}}basic slc-trivial fibration}} 
$f':(X', B_{X'})\to Y'$, where 
$B_{X'}$ is defined by $\mu^*(K_X+B)=K_{X'}+B_{X'}$, with 
the following commutative diagram: 
$$
\xymatrix{
   (X', B_{X'}) \ar[r]^{\mu} \ar[d]_{f'} & (X, B)\ar[d]^{f} \\
   Y' \ar[r]_{\sigma} & Y, 
} 
$$
where $X'$ coincides with 
$X\times _{Y}Y'$ over a nonempty Zariski open set of $Y'$. 
More precisely, $X'$ is a simple normal crossing variety with a morphism 
$X'\to X\times _Y Y'$ that is an isomorphism over 
a nonempty Zariski open set of $Y'$ such that 
$X'$ is projective over $Y'$ and that every stratum of $X'$ is dominant onto 
$Y'$. 

\begin{lem}\label{c-lem4.4}
Let $f'_i: (X'_i, B_{X'_i})\to Y'$ be an induced {\em{(}}pre-{\em{)}}basic 
slc-trivial fibration for $i=1, 2$. 
Then there exist an induced {\em{(}}pre-{\em{)}}basic slc-trivial fibration $f'_3: 
(X'_3, B_{X'_3})\to Y'$ and a commutative diagram 
$$
\xymatrix{& X'_3\ar[dl]_-{p_1}\ar[dr]^-{p_2}& 
\\ X'_1 \ar[dr]_-{f'_1}&& X'_2\ar[dl]^-{f'_2} \\ 
&Y'&
}
$$
such that $p_i$ induces a birational correspondence between 
each stratum of $X'_3$ and $X'_i$ and that 
$K_{X'_3}+B_{X'_3}=p^*_i(K_{X'_i}+B_{X'_i})$ holds for $i=1, 2$. 
\end{lem}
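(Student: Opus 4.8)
The plan is to realise $X'_3$ as a common simple normal crossing model of $X'_1$ and $X'_2$ over $Y'$, built from the fibre product $X'_1\times_{Y'}X'_2$, and then to read off the identity $K_{X'_3}+B_{X'_3}=p_i^*(K_{X'_i}+B_{X'_i})$ from functoriality of pullback of $\mathbb Q$-Cartier divisors.

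By \ref{c-say4.3}, each $X'_i$ coincides with $X\times_Y Y'$ over a nonempty Zariski open subset $U_i\subseteq Y'$; put $U=U_1\cap U_2$. Over $U$ the two simple normal crossing varieties $X'_1$ and $X'_2$ are therefore canonically identified with $X\times_Y U$, so there is a birational map $X'_1\dashrightarrow X'_2$ over $Y'$ which is an isomorphism over $U$ and which is compatible with the structure morphisms $\mu_i:X'_i\to X$ there. Let $W$ be the closure in $X'_1\times_{Y'}X'_2$ of the locally closed subset $X\times_Y U$, embedded through the two identifications above. The two projections restrict to projective birational morphisms $q_i:W\to X'_i$ that are isomorphisms over $U$; moreover $\mu_1\circ q_1$ and $\mu_2\circ q_2$ agree on the dense open subset $W|_U$, hence, $X$ being separated and $W$ reduced, they coincide and define a morphism $\mu_W:W\to X$.

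I would then apply resolution of singularities for reducible varieties, in the form used in \ref{c-say4.3}, to the pair $(W,\mu_W^*(K_X+B))$. This yields a projective birational morphism $g:X'_3\to W$ from a simple normal crossing variety $X'_3$ which is projective over $Y'$, which is an isomorphism over the dense open locus of $W$ that is already simple normal crossing and over which $\mu_W^*(K_X+B)$ already has simple normal crossing support containing no stratum (in particular over $U$), and which can be chosen so that every stratum of $X'_3$ is dominant onto $Y'$ and so that each composite $p_i:=q_i\circ g:X'_3\to X'_i$ induces a birational correspondence between each stratum of $X'_3$ and a stratum of $X'_i$. Setting $\mu_3:=\mu_W\circ g$ and defining $B_{X'_3}$ by $\mu_3^*(K_X+B)=K_{X'_3}+B_{X'_3}$, with all canonical divisors pulled back from one fixed top rational differential form on $X$, we obtain an induced (pre-)basic slc-trivial fibration $f'_3:(X'_3,B_{X'_3})\to Y'$ fitting into the required commutative diagram, because $f'_1\circ p_1=f'_2\circ p_2=f'_3$ and $\mu_i\circ p_i=\mu_3$ hold by construction. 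Since $K_X+B$ is $\mathbb Q$-Cartier, being $\mathbb Q$-linearly equivalent to $f^*D$ with $D$ a $\mathbb Q$-Cartier divisor, functoriality of pullback gives
$$
K_{X'_3}+B_{X'_3}=\mu_3^*(K_X+B)=p_i^*\bigl(\mu_i^*(K_X+B)\bigr)=p_i^*(K_{X'_i}+B_{X'_i})
$$
for $i=1,2$, which is the remaining assertion.

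The step I expect to be the main obstacle is the resolution in the previous paragraph: the model $X'_3\to W$ must be an isomorphism over the generic point of $Y'$, so as not to destroy the identifications of $X'_3$ with $X'_1$ and $X'_2$ generically, yet it must be arranged so that no stratum of $X'_3$ becomes vertical over $Y'$ and so that $p_1$ and $p_2$ respect the stratifications. This is precisely the resolution lemma for simple normal crossing pairs that already underlies the construction of induced (pre-)basic slc-trivial fibrations in \ref{c-say4.3}, here applied with the extra boundary $\mu_W^*(K_X+B)$; once it is granted, the rest of the argument is a formal consequence of the universal property of pullback. Alternatively, one may first resolve the birational map $X'_1\dashrightarrow X'_2$ over $Y'$ in the category of simple normal crossing varieties and then run the construction of \ref{c-say4.3} verbatim on the resulting model.
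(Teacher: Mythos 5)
Your proposal is correct and follows essentially the same route as the paper: the paper's proof simply observes that $X'_1$ and $X'_2$ coincide with $X\times_Y Y'$ over a common nonempty open subset $U$ of $Y'$ and then invokes the resolution theorem of Bierstone--Vera Pacheco (\cite[Theorem 1.4]{bierstone-vera}) to produce a common partial resolution $X'_3$ with the stated properties. Your more detailed construction (closure of the graph in the fibre product, followed by resolution and the crepant pullback computation) is just an expanded version of that same argument, and you correctly identify the resolution step as the only substantive input.
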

\begin{proof}
By definition, there exists a nonempty Zariski open set 
$U$ of $Y'$ such that $X'_1$ and $X'_2$ coincide with 
$X\times _YY'$ over $U$. 
By \cite[Theorem 1.4]{bierstone-vera}, 
we can take a common partial resolution $X'_3$ of $X'_1$ and $X'_2$, 
which coincides with $X\times _Y Y'$ over $U$,  
with the desired properties. 
\end{proof}
\end{say}

\begin{say}[Discriminant and 
moduli $\mathbb Q$-b-divisors]\label{c-say4.5} 
Let $f:(X, B)\to Y$ be a (pre-)basic slc-trivial fibration as in Definition \ref{c-def4.1}. 
Let $P$ be a prime divisor on $Y$. 
By shrinking $Y$ around the generic point of $P$, 
we assume that $P$ is Cartier. We set 
$$
b_P:=\max \left\{t \in \mathbb Q\, \left|\, 
\begin{array}{l}  {\text{$(X, B+tf^*P)$ is sub slc over}}\\
{\text{the generic point of $P$}} 
\end{array}\right. \right\} 
$$ 
and 
set $$
B_Y=\sum _P (1-b_P)P, 
$$ 
where $P$ runs over prime divisors on $Y$. Equivalently, we have 
$$
b_P=\max \left\{t \in \mathbb Q\, \left|\, 
\begin{array}{l}  {\text{$(X^\nu, \Theta+t\nu^*f^*P)$ is sub log canonical}}\\
{\text{over the generic point of $P$}} 
\end{array}\right. \right\},  
$$ 
where $\nu:X^\nu\to X$ is the normalization and 
$K_{X^\nu}+\Theta=\nu^*(K_X+B)$, that is, 
$\Theta$ is the sum of the inverse images of $B$ and the singular 
locus of $X$. 
Then it is easy to  see that 
$B_Y$ is a well-defined $\mathbb Q$-divisor 
on $Y$ and is called the {\em{discriminant 
$\mathbb Q$-divisor}} of $f:(X, B)\to Y$. We set 
$$
M_Y=D-K_Y-B_Y
$$ 
and call $M_Y$ the {\em{moduli $\mathbb Q$-divisor}} of $f:(X, B)\to Y$. 
By definition, we have 
$$
K_X+B\sim _{\mathbb Q}f^*(K_Y+B_Y+M_Y). 
$$

Let $\sigma:Y'\to Y$ be a proper birational morphism 
from a normal variety $Y'$ and let $f':(X', B_{X'})\to Y'$ be 
an induced (pre-)basic slc-trivial fibration 
by $\sigma:Y'\to Y$.  
We can define $B_{Y'}$, $K_{Y'}$ and $M_{Y'}$ such that 
$\sigma^*D=K_{Y'}+B_{Y'}+M_{Y'}$, 
$\sigma_*B_{Y'}=B_Y$, $\sigma _*K_{Y'}=K_Y$ 
and $\sigma_*M_{Y'}=M_Y$. We note that 
$B_{Y'}$ is independent of the choice of $(X', B_{X'})$, 
that is, $B_{Y'}$ is well defined,  
by Lemma \ref{c-lem4.4} above and Lemma \ref{c-lem4.6} below. 
Hence 
there exist a unique $\mathbb Q$-b-divisor $\mathbf B$ 
such that 
$\mathbf B_{Y'}=B_{Y'}$ for every $\sigma:Y'\to Y$ and a unique 
$\mathbb Q$-b-divisor $\mathbf M$ such that $\mathbf M_{Y'}=M_{Y'}$ for 
every $\sigma:Y'\to Y$. 
Note that $\mathbf B$ is called 
the {\em{discriminant $\mathbb Q$-b-divisor}} and 
that $\mathbf M$ is called 
the {\em{moduli $\mathbb Q$-b-divisor}} associated to $f:(X, B)\to Y$. 
We sometimes simply say that $\mathbf M$ is 
the {\em{moduli part}} of $f:(X, B)\to Y$. 
\end{say}

The following lemma has already been used in the 
definition of discriminant $\mathbb Q$-b-divisors in \ref{c-say4.5}. 

\begin{lem}\label{c-lem4.6}
Let $f_i: (X_i, B_i)\to Y$ be a pre-basic 
slc-trivial fibration for 
$i=1, 2$. 
Assume that there exists a morphism 
$p: X_2\to X_1$ over $Y$ 
which induces a birational correspondence between 
each irreducible component of $X_1$ and $X_2$ such that 
$K_{X_2}+B_2=p^*(K_{X_1}+B_1)$ holds. 
Then $f_1: (X_1, B_1)\to Y$ and $f_2: (X_2, B_2)\to Y$ 
induce the same discriminant $\mathbb Q$-divisor 
on $Y$. 
\end{lem}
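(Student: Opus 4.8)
The plan is to reduce everything to the normalizations and invoke the birational invariance of log discrepancies for sub log canonical pairs. Concretely, for each prime divisor $P$ on $Y$ we must compare
$$
b_P^{(i)}:=\max\left\{t\in\mathbb Q\,\left|\,(X_i^\nu,\Theta_i+t\nu_i^*f_i^*P)\text{ is sub lc over the generic point of }P\right.\right\},
$$
where $\nu_i:X_i^\nu\to X_i$ is the normalization and $K_{X_i^\nu}+\Theta_i=\nu_i^*(K_{X_i}+B_i)$, and show $b_P^{(1)}=b_P^{(2)}$. Since everything in the definition of $b_P$ only depends on a neighborhood of the generic point of $P$, I would first shrink $Y$ around the generic point of $P$ so that $P$ is Cartier; this is harmless for the statement.

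First I would set up the diagram on normalizations. The morphism $p:X_2\to X_1$ over $Y$ induces, by passing to normalizations, a morphism $p^\nu:X_2^\nu\to X_1^\nu$ (a morphism of normal — in general reducible — varieties, componentwise), and by hypothesis $p$ gives a birational correspondence between the irreducible components, so $p^\nu$ is a proper birational morphism on each component. The key identity to extract from $K_{X_2}+B_2=p^*(K_{X_1}+B_1)$ is its pullback version on normalizations: I claim $K_{X_2^\nu}+\Theta_2=(p^\nu)^*(K_{X_1^\nu}+\Theta_1)$. This follows because the conductor of $X_2$ maps into the conductor of $X_1$ under $p$ (both $X_i$ are normal crossing in codimension one and $p$ is an isomorphism in codimension one on each component, being birational there), so $\Theta_2=(p^\nu)^*\Theta_1$ modulo the canonical divisor discrepancy, and then the asserted equality is just the pullback of the given relation composed with the compatibility of $\nu_i$ with $p$.

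Granting that identity, the rest is the standard fact that sub log canonicity and log discrepancies are computed by any one high enough model. For a fixed rational $t$, pull back further: $(X_1^\nu,\Theta_1+t\nu_1^*f_1^*P)$ is sub lc over the generic point of $P$ if and only if, on the model $X_2^\nu$, the divisor $\Theta_2+t\nu_2^*f_2^*P=(p^\nu)^*(\Theta_1+t\nu_1^*f_1^*P)$ has all coefficients $\le 1$ over the generic point of $P$ and the same holds on every further blowup — but that is exactly the condition that $(X_2^\nu,\Theta_2+t\nu_2^*f_2^*P)$ is sub lc over the generic point of $P$, because discrepancies over $X_2^\nu$ form a subset of discrepancies over $X_1^\nu$, and conversely every divisor over $X_1^\nu$ is a divisor over $X_2^\nu$ (as $p^\nu$ is birational). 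Here I also use $\nu_2^*f_2^*P=(p^\nu)^*\nu_1^*f_1^*P$, which is immediate from $f_2=f_1\circ p$ and the commutativity of the normalization squares. Hence the two maximization problems defining $b_P^{(1)}$ and $b_P^{(2)}$ have the same feasible set of $t$, so $b_P^{(1)}=b_P^{(2)}$, and therefore $B_{Y,1}=\sum_P(1-b_P^{(1)})P=\sum_P(1-b_P^{(2)})P=B_{Y,2}$.

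The main obstacle I expect is the clean bookkeeping on normalizations of reducible simple normal crossing varieties: verifying that $p$ really descends to $p^\nu$ with the claimed properties (in particular that it respects conductors and is a componentwise proper birational morphism), and that the relation $K_{X_2^\nu}+\Theta_2=(p^\nu)^*(K_{X_1^\nu}+\Theta_1)$ holds on the nose rather than merely up to something supported over the singular locus. Once that is pinned down, the log-discrepancy comparison is routine and purely formal.
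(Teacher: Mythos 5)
Your proposal is correct and follows essentially the same route as the paper: the paper's proof simply asserts that $(X_1,B_1+tf_1^*P)$ is sub slc over the generic point of $P$ if and only if $(X_2,B_2+tf_2^*P)$ is, which is exactly the equivalence you justify by passing to normalizations and using the crepant relation $K_{X_2^\nu}+\Theta_2=(p^\nu)^*(K_{X_1^\nu}+\Theta_1)$ (itself immediate from $\nu_1\circ p^\nu=p\circ\nu_2$ and the hypothesis). Your extra care about conductors is fine but follows formally from this functoriality, so there is no gap.
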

\begin{proof} 
Let $P$ be a prime divisor on $Y$. 
We may assume that $P$ is Cartier by shrinking $Y$ around $P$ as 
above. 
Since $(X_1, B_1+tf_1^*P)$ is sub slc over the 
generic point of $P$ if and only if 
$(X_2, B_2+tf_2^*P)$ is sub slc over the generic point of $P$ 
for every $t\in \mathbb Q$. 
Therefore, $f_1: (X_1, B_1)\to Y$ and $f_2: (X_2, B_2)\to Y$ 
induce the same discriminant $\mathbb Q$-divisor 
on $Y$ by the definition of discriminant $\mathbb Q$-divisors. 
\end{proof}

When $(X, \Supp B+\Supp f^*P)$ is a simple normal crossing pair, 
we can explicitly write down $b_P$. 

\begin{rem}[{\cite[Theorem 2]{kawamata} and 
\cite[Remark 3.1]{ambro1}}]\label{c-rem4.7} 
Let $f:(X, B)\to Y$ be a pre-basic slc-trivial 
fibration and let $P$ be a prime divisor on $Y$. 
By shrinking $Y$ around the generic point of $P$, we assume that 
$P$ is Cartier. 
If $(X, \Supp B+\Supp f^*P)$ is a simple normal crossing pair and the 
irreducible decomposition $f^*P=\sum _j w_j Q_j$ satisfies 
$f(Q_j)=P$ for every $j$, then we can explicitly write 
\begin{equation}\label{c-eq4.1}
b_P=\min_j \frac{1-d_j}{w_j}, 
\end{equation} 
where $d_j=\mult _{Q_j}B$ for every $j$, by direct calculations. 
Equivalently, we have 
\begin{equation}\label{c-eq4.2}
\mult _P B_Y=1-b_P=\max_j \frac{d_j+w_j-1}{w_j}. 
\end{equation} 
Note that \eqref{c-eq4.2} plays a crucial role when we compare 
the minimal log discrepancy of $(X, B)$ with that of 
$(Y, B_Y)$. 
See, for example, the proof of Theorem \ref{c-thm5.1} below. 
\end{rem}

We give a small remark on the definition of 
discriminant $\mathbb Q$-divisors. 

\begin{rem}\label{c-rem4.8}
Let $f:(X, B)\to Y$ be a pre-basic 
slc-trivial fibration. We do not need condition (4) in 
Definition \ref{c-def4.1} in order to 
define the discriminant $\mathbb Q$-divisor $B_Y$. 
\end{rem}

We will use condition (5) in Definition \ref{c-def4.1} 
to relate the moduli $\mathbb Q$-divisor 
$M_Y$ with some Hodge bundles (see Proposition 
\ref{d-prop6.3} below). 

\begin{rem}\label{d-rem4.9} 
Let $f:(X, B)\to Y$ be a pre-basic slc-trivial fibration. 
We start with $K_X+B\sim _{\mathbb Q} f^*D^\dag$ 
for some $\mathbb Q$-Cartier $\mathbb Q$-divisor $D^\dag$ 
on $Y$. 
Note that $D\sim _{\mathbb Q}D^\dag$ holds since 
$f_*\mathcal O_X\simeq \mathcal O_Y$. 
In this setting, we put 
$$
M^\dag_Y:=D^\dag-K_Y-B_Y 
$$ 
and obtain the moduli $\mathbb Q$-divisor 
$\mathbf M^\dag$ associated to $f:(X, B)\to Y$ 
as in \ref{c-say4.5} above.  
We note that $K_Y$ is well defined modulo linear equivalence 
and the discriminant $\mathbb Q$-b-divisor 
$\mathbf B$ is independent of $D$ and $D^\dag$ 
(see Remark \ref{c-rem4.8}). 
Therefore, we have $g\in \Gamma (Y, \mathcal K^*_Y)$ and 
a rational number $r$ such that 
$$
\mathbf M=\mathbf M^\dag+r\overline{(g)}
$$
holds. Therefore, if $\mathbf M^\dag=\overline {\mathbf M^\dag_{Y'}}$ 
holds 
for some proper birational morphism $\sigma: Y'\to Y$ 
from a normal variety $Y'$, 
then $\mathbf M=\overline {\mathbf M_{Y'}}$ holds true by 
Lemma \ref{d-lem2.13}. 
\end{rem}

We prepare an elementary finite base change 
formula, which will be used in Sections \ref{c-sec8} and \ref{c-sec9}. 

\begin{lem}[{\cite[Theorem 3.2]{ambro1}}]\label{d-lem4.10}
Let us consider a commutative diagram: 
$$
\xymatrix{
   (X', B_{X'}) \ar[r]^{\mu} \ar[d]_{f'} & (X, B)\ar[d]^{f} \\
   Y' \ar[r]_{\sigma} & Y, 
} 
$$
where $f:(X, B)\to Y$ is a pre-basic slc-trivial fibration, 
$\sigma:Y'\to Y$ is a finite surjective 
morphism of normal irreducible varieties, 
and $f':(X', B_{X'})\to Y'$ is an induced pre-basic 
slc-trivial fibration. 
Then $\sigma^*(K_Y+B_Y)=K_{Y'}+B_{Y'}$ holds, 
where $B_Y$ {\em{(}}resp.~$B_{Y'}${\em{)}} is 
the discriminant $\mathbb Q$-divisor of 
$f:(X, B)\to Y$ {\em{(}}resp.~$ f':(X', B_{X'})\to Y'${\em{)}}. 
\end{lem}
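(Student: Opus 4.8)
The plan is to verify the identity $\sigma^*(K_Y+B_Y)=K_{Y'}+B_{Y'}$ coefficient by coefficient along the prime divisors of $Y'$ and to reduce it to a single numerical comparison of the $b$-invariants of \ref{c-say4.5}. Since $\sigma$ is finite, every prime divisor $P'$ on $Y'$ dominates a prime divisor $P:=\sigma(P')$ on $Y$; write $e\geq 1$ for the ramification index of $\sigma$ along $P'$. Shrinking $Y$ around the generic point $\eta$ of $P$ and $Y'$ around the generic point $\eta'$ of $P'$, we may assume $P$ and $P'$ are Cartier and $\sigma^*P=eP'+E$ with $\eta'\notin\Supp E$; choosing $K_{Y'}$ to be the divisor of the pull-back of the rational top form defining $K_Y$, the ramification formula in characteristic zero gives $K_{Y'}=\sigma^*K_Y+R$ with $\mult_{P'}R=e-1$. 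Only the $P$-component of $B_Y$ contributes to the coefficient of $P'$ in $\sigma^*B_Y$ near $\eta'$, so $\mult_{P'}\sigma^*B_Y=e(1-b_P)$; comparing this with $\mult_{P'}(K_{Y'}+B_{Y'})=\mult_{P'}K_{Y'}+(1-b_{P'})$ shows that the desired identity is equivalent to $b_{P'}=e\,b_P$.

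To prove $b_{P'}=e\,b_P$, note that the commutativity $f\circ\mu=\sigma\circ f'$ gives $\mu^*f^*P=f'^*\sigma^*P=e\,f'^*P'$ over a neighbourhood of the fibre $f'^{-1}(\eta')$, since $f'^*E$ avoids that fibre; hence over such a neighbourhood $K_{X'}+B_{X'}+t\,f'^*P'=\mu^*\bigl(K_X+B+\tfrac{t}{e}f^*P\bigr)$ for every $t\in\mathbb Q$. Passing to the normalizations, the induced map $(X')^\nu\to X^\nu$ is crepant for the divisors $\Theta_{X'}$, $\Theta_X$ of Definition \ref{c-def2.3} (this is forced by the defining relation $\mu^*(K_X+B)=K_{X'}+B_{X'}$), and, by the construction of induced fibrations in \ref{c-say4.3}, it factors as a proper birational morphism followed by the finite base change of $\sigma$. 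Since a crepant proper birational pull-back preserves sub lc singularities, and the sub lc property of pairs on normal varieties is preserved in both directions under pull-back by finite surjective morphisms (with the boundary pulled back crepantly) — once one localizes $Y$ and $Y'$ at $\eta$ and $\eta'$, so that the relevant base changes become finite surjective morphisms of Dedekind schemes — we conclude that $(X,B+\tfrac{t}{e}f^*P)$ is sub slc over $\eta$ if and only if $(X',B_{X'}+t\,f'^*P')$ is sub slc over $\eta'$. Taking suprema over $t$ and substituting $t=e\,s$ yields $b_{P'}=e\,b_P$, and hence the lemma.

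The step I expect to be the main obstacle is this last equivalence: one must pin down precisely what ``sub slc over $\eta$'' versus ``over $\eta'$'' means under the finite morphism $\sigma$ and establish the two-sided invariance of sub slc singularities under finite surjective base change, the point being that $\mu\colon X'\to X$ is only generically finite, not birational, so the verification passes through the normalizations and combines the finite-cover and the proper-birational versions of the statement (this is essentially the content of \cite[Theorem 3.2]{ambro1}). Alternatively, one could choose simple normal crossing models on both sides and read $b_P$ and $b_{P'}$ off formula \eqref{c-eq4.2}; but that forces an explicit toroidal resolution of $X\times_Y Y'$ along $f'^{-1}(\eta')$, obtained by extracting an $e$-th root of the local equations $f^*P=\sum_j w_jQ_j$, which is considerably more laborious.
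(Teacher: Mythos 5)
Your proposal is correct and follows essentially the same route as the paper: both reduce the identity to the single numerical statement $b_{P'}=w\,b_P$ (with $w=e$ the ramification index) via the log ramification formula $\sigma^*(K_Y+P)=K_{Y'}+P'$ near the generic point of $P'$, and both prove that statement by comparing sub-lc thresholds through the crepant relation $K_{X'}+B_{X'}+cw(f')^*P'=\mu^*\bigl(K_X+B+cf^*P\bigr)$. The difference is in how the inequality $b_{P'}\leq w\,b_P$ is justified. The paper first replaces $(X,B)$ by $(X^\nu,\Theta)$, gets $b_{P'}\geq wb_P$ by direct crepant pullback, and then, for the converse, explicitly extracts a divisor $E$ over $X$ with $a(E,X,B+cf^*P)\leq -1$ and $f(E)=P$ and lifts it to a divisor $E'$ on $X'$ with $f'(E')=P'$, using that $X'$ is a modification of $X\times_YY'$. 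You instead invoke a two-sided invariance of sub-lc under crepant finite pullback ``after localizing at $\eta$ and $\eta'$.'' Be careful here: $\Spec\mathcal O_{Y',\eta'}\to\Spec\mathcal O_{Y,\eta}$ is only quasi-finite, not finite, when $P$ has several preimages, and sub-lc-ness of the pullback \emph{over $\eta'$ alone} does not formally follow from the general finite-cover statement; what saves the argument is precisely the paper's lifting step, namely that any non-lc place of $(X,B+cf^*P)$ dominating $P$ has a preimage in (a modification of) $X\times_YY'$ whose center dominates $P'$, because $E\times_PP'$ is nonempty and dominant over $P'$. With that observation made explicit, your argument closes; as written, this is the one step that is asserted rather than proved.
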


\begin{rem}\label{d-rem4.11}
In Lemma \ref{d-lem4.10}, $K_Y+B_Y$ is not necessarily 
$\mathbb Q$-Cartier. However, 
we can define $\sigma^*(K_Y+B_Y)$ since $\sigma$ is 
a finite surjective morphism between normal varieties. 
\end{rem} 
 
\begin{proof}[Proof of Lemma \ref{d-lem4.10}]
Without loss of generality, we may assume that 
$Y$ and $Y'$ are both smooth by shrinking $Y$ suitably. 
Let $P'$ be a prime divisor on $Y'$. 
We put $P=\sigma(P')$ and $w=\mult _{P'}\sigma^*P$. 
Then it is sufficient to see $wb_P=b_{P'}$ because 
$\sigma^*(K_Y+P)=K_{Y'}+P'$ holds in a neighborhood of 
the generic point of $P'$. 
By the definition of discriminant $\mathbb Q$-divisors, 
we may assume that $X$ is smooth by replacing 
$(X, B)$ with $(X^\nu, \Theta)$, where $\nu:X^\nu \to X$ 
is the normalization with 
$K_{X^\nu}+\Theta=\nu^*(K_X+B)$ as usual. 

We take any $c\leq b_P$. Then 
$K_X+B+cf^*P$ is sub log canonical 
over the generic point of $P$. Therefore, 
$K_{X'}+B_{X'}+c(f\circ \mu)^*P=K_{X'}+B_{X'}+c(f')^*\sigma^*P$ is 
sub log canonical over the generic point of $P'$. 
Since $\sigma^*P=wP'$, 
$K_{X'}+B_{X'}+cw(f')^*P'$ is sub log canonical 
over the generic point of $P'$. 
This implies that $cw\leq b_{P'}$. 
Thus we get $b_{P'}\geq wb_P$. 

We take any $c\geq b_P$. By taking a suitable 
birational modification of $X$, 
we may assume that there exists a prime divisor $E$ on $X$ such that 
$a(E, X, B+cf^*P)\leq -1$ and $f(E)=P$. 
Since $X'$ is a resolution of $X\times _Y Y'$, we can find a prime divisor 
$E'$ on $X'$ such that $\mu(E')=E$, $f'(E')=P'$, and 
$a(E', X', B_{X'}+cw(f')^*P')
=a(E', X', B_{X'}+c(f\circ \sigma)^*P)\leq -1$. 
Therefore, we get $cw\geq b_{P'}$. 
This implies $wb_P\geq b_{P'}$. 

Thus we obtain $wb_P=b_{P'}$. This is what we wanted, that is, 
$\sigma^*(K_Y+B_Y)=K_{Y'}+B_{Y'}$. 
\end{proof}

We close this section with the following easy lemma. 

\begin{lem}\label{d-lem4.12}
Let $f:(X, B)\to Y$ be a {\em{(}}pre-{\em{)}}basic slc-trivial 
fibration. Then there 
exists a {\em{(}}pre-{\em{)}}basic 
slc-trivial fibration $\overline f: (\overline X, \overline B)
\to \overline Y$ such that 
\begin{itemize}
\item[(i)] $\overline Y$ is a normal complete variety which 
contains $Y$ as a dense Zariski open set, and 
\item[(ii)] the restriction of $\overline f: (\overline X, \overline B)
\to \overline Y$ to $Y$ coincides with 
$f:(X, B)\to Y$. 
\end{itemize}
\end{lem}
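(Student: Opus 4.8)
The plan is to compactify $Y$ first, then extend the morphism $f$ and the pair $(X,B)$ over the added boundary while keeping all five (or four) defining properties of a (pre-)basic slc-trivial fibration. First I would choose, by Nagata's compactification theorem, a normal complete variety $\overline Y$ containing $Y$ as a dense Zariski open subset; since $Y$ is normal and irreducible, taking the normalization of a Nagata compactification (and of its closure in a projective space if one wants quasi-projectivity is not needed here) gives such a $\overline Y$. Next I would spread out $f:X\to Y$ over $\overline Y$: choosing an embedding $X\hookrightarrow \mathbb P^N\times Y$ compatible with the projective morphism $f$, let $\overline X_0$ be the closure of $X$ in $\mathbb P^N\times \overline Y$, so that $\overline X_0\to \overline Y$ is projective and restricts to $f$ over $Y$. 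Then I would take a projective birational morphism $\overline X\to \overline X_0$, isomorphic over $X$, such that $\overline X$ is a simple normal crossing variety and the closure of $B$ together with the exceptional locus and the preimage of $\overline Y\setminus Y$ has simple normal crossing support; this is possible by the usual embedded resolution / semistable-type reduction results for simple normal crossing pairs (e.g. \cite{bierstone-vera}), applied while shrinking nothing over $Y$.

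The key step is to define $\overline B$ on $\overline X$ so that properties (2)--(5) survive. I would let $\overline B$ be the unique $\mathbb Q$-divisor on $\overline X$ whose restriction to $X$ is $B$ and which is chosen on the divisors lying over $\overline Y\setminus Y$ so that $(\overline X,\overline B)$ is a simple normal crossing pair and that every stratum of $\overline X$ is dominant onto $\overline Y$ — the latter can be arranged because, after the resolution step, one may discard or blow down any stratum of $\overline X$ not dominating $\overline Y$, or more simply only keep the components of $\overline X$ dominating $\overline Y$ and adjust. Property (2), $\overline f_*\mathcal O_{\overline X}\simeq \mathcal O_{\overline Y}$, then follows since $\overline Y$ is normal and $\overline X\to \overline Y$ has connected fibres generically (it holds over $Y$ and $\overline Y$ is normal, so by taking Stein factorization and using that it is birational over $Y$ one concludes it is the identity). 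For property (4), the relation $K_X+B\sim_{\mathbb Q} f^*D$ extends: choosing $\overline D$ a $\mathbb Q$-Cartier $\mathbb Q$-divisor on $\overline Y$ restricting to $D$ (possible after replacing $\overline Y$ by a further blow-up to make things $\mathbb Q$-Cartier, or by adding a suitable boundary supported on $\overline Y\setminus Y$), the two $\mathbb Q$-divisors $K_{\overline X}+\overline B$ and $\overline f^*\overline D$ agree over $Y$, hence differ by a $\mathbb Q$-divisor supported over $\overline Y\setminus Y$; absorbing this difference into $\overline B$ (it is a vertical divisor with simple normal crossing support, so this is harmless for the simple-normal-crossing-pair condition, and one checks $\overline B=\overline B^{\le 1}$ can be kept over the generic point of $\overline Y$, which lies in $Y$) yields $K_{\overline X}+\overline B\sim_{\mathbb Q}\overline f^*\overline D$. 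Property (3), $\overline B=\overline B^{\le 1}$ over the generic point of $\overline Y$, is automatic since the generic point of $\overline Y$ equals the generic point of $Y$ and there $\overline B=B=B^{\le 1}$. Finally property (5), $\rank \overline f_*\mathcal O_{\overline X}(\lceil -\overline B^{<1}\rceil)=1$, is again a statement at the generic point of $\overline Y$, hence unchanged from the corresponding statement for $f:(X,B)\to Y$.

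The main obstacle I anticipate is the resolution step: arranging simultaneously that $\overline X$ is a \emph{simple normal crossing} variety, that $(\overline X,\overline B)$ is a simple normal crossing pair, and that \emph{every} stratum of $\overline X$ (including the new ones over the boundary) dominates $\overline Y$. The domination condition is the delicate one, since a naive compactification will produce strata collapsing into $\overline Y\setminus Y$; one resolves this by, after the embedded resolution, throwing away the irreducible components of $\overline X$ that do not dominate $\overline Y$ and then re-resolving, or equivalently by working with the components of $\overline X_0$ dominating $\overline Y$ from the start and taking their simple normal crossing resolution — the point being that over $Y$ nothing changes, so all generic-fibre conditions (3), (5), and the $\mathbb Q$-linear triviality (4) are preserved verbatim, while (1) and (2) are checked directly on $\overline Y$ normal. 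Everything else is routine bookkeeping with vertical divisors.
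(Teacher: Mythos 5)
Your plan is essentially the paper's proof: Nagata-compactify $Y$, extend $f$ and apply \cite[Theorem 1.4]{bierstone-vera} so that $\overline X$ is a complete simple normal crossing variety with every stratum dominant onto $\overline Y$ and $\overline X\setminus X$ a simple normal crossing divisor, obtain $\overline f_*\mathcal O_{\overline X}\simeq\mathcal O_{\overline Y}$ via the Stein factorization and Zariski's main theorem, and choose $\overline B$ so as to force property (4). The one point to tighten is your claim that $K_{\overline X}+\overline B$ and $\overline f^*\overline D$ ``agree over $Y$'' and hence differ by a vertical divisor: over $Y$ they are only $\mathbb Q$-linearly equivalent, so you should first fix $r\in\mathbb Q$ and $\varphi\in\Gamma(X,\mathcal K^*_X)$ with $K_X+B+r(\varphi)=f^*D$, regard $\varphi$ as an element of $\Gamma(\overline X,\mathcal K^*_{\overline X})$, and define $\overline B:=\overline f^*\overline D-K_{\overline X}-r(\varphi)$, which restricts to $B$ over $Y$, is supported in $(\overline X\setminus X)\cup\Supp B'$ (with $B'$ the closure of $B$) so that $(\overline X,\overline B)$ remains a simple normal crossing pair, and makes (3) and (5) immediate at the generic point of $\overline Y$.
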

\begin{proof}
We can write $K_X+B+r(\varphi)=f^*D$ for some 
$\mathbb Q$-Cartier $\mathbb Q$-divisor $D$ on $Y$, 
$r\in \mathbb Q$, and $\varphi\in \Gamma (X, \mathcal K^*_X)$. 
We take a normal complete irreducible variety $\overline Y$ which 
contains $Y$ as a dense Zariski open set. 
By taking a suitable birational modification 
(see \cite[Th\'eor\`eme (5.2.2)]{raynaud-g}), we may assume that 
there exists a $\mathbb Q$-Cartier $\mathbb Q$-divisor 
$\overline D$ on $\overline Y$ with $\overline D|_Y=D$. 
By using \cite[Theorem 1.4]{bierstone-vera}, 
we can construct a complete 
simple normal crossing variety $\overline X$ which contains $X$ 
as a dense Zariski open set and 
a projective morphism $\overline f: \overline X\to \overline Y$ which is an 
extension of $f:X\to Y$. 
By \cite[Theorem 1.4]{bierstone-vera}, we may assume that 
every stratum of $\overline X$ is dominant onto $\overline Y$ and 
that $\Sigma:=\overline X\setminus X$ is a simple 
normal crossing divisor on $\overline X$. 
We consider the Stein factorization 
$$
\overline f: \overline X\longrightarrow\overline Z:=\Spec _{\overline Y} 
\overline f_* \mathcal O_{\overline X} \overset{\alpha}{\longrightarrow} 
\overline Y
$$
of $\overline f: \overline X\to \overline Y$. Note that 
$\alpha$ is an isomorphism over $Y$ by construction. 
Since every irreducible component of $X$ is dominant 
onto $\overline Y$, $\overline Z$ is an irreducible variety. 
Therefore, by Zarisiki's main theorem, 
$\alpha$ is an isomorphism. 
This means that $\overline f_*\mathcal O_{\overline X}
\simeq \mathcal O_{\overline Y}$ holds. We may further assume that 
$(\overline X, \Sigma+\Supp B')$ is a simple normal crossing 
pair, where $B'$ is the closure of $B$ on $\overline X$. 
We put 
$\overline B:=\overline f^*\overline D-K_{\overline X} 
-r(\varphi)$. 
Note that we can see $\varphi$ as an element of $\Gamma (\overline 
X, \mathcal K^*_{\overline X})$. 
Then $\overline f: (\overline X, \overline B)\to \overline Y$ satisfies 
the desired properties. 
\end{proof}

Lemma \ref{d-lem4.12} is indispensable 
for the proof of Theorem \ref{c-thm1.2} (ii). 
 
\section{Inversion of adjunction}\label{c-sec5} 

In this section, we prove 
the following theorem, which is 
essentially the same as \cite[Theorem 3.1]{ambro3}. 
Although we do not use Theorem \ref{c-thm5.1} explicitly in this paper, 
the arguments in the proof of Theorem \ref{c-thm5.1} below 
may help the reader understand the proof of 
Theorem \ref{c-thm1.7} in Section \ref{c-sec11}. 

\begin{thm}[Inversion of adjunction]\label{c-thm5.1}
Let $f:(X, B)\to Y$ be a pre-basic 
slc-trivial fibration such that 
$\mathbf {K}+\mathbf{B}=
\overline{K_Y+B_Y}$, where $\mathbf K$ is the canonical 
b-divisor of $Y$ and $\mathbf B$ is 
the discriminant $\mathbb Q$-b-divisor of $f:(X, B)\to Y$. 
Then there is a positive integer $N$ such that 
$$
\frac{1}{N}\mld _{f^{-1}(Z)}(X, B)
\leq \mld _Z(Y, B_Y)\leq \mld _{f^{-1}(Z)} (X, B)
$$ 
for every closed subset $Z\subsetneq Y$. 
\end{thm}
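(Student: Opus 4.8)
The plan is to reduce the statement to a local computation over the generic points of prime divisors on suitable birational models of $Y$, using the explicit formula for the discriminant divisor in the simple normal crossing case (Remark \ref{c-rem4.7}). First I would fix a closed subset $Z\subsetneq Y$ and, since minimal log discrepancies are computed by taking inf over all divisors over $Y$, it suffices to compare $a(E,Y,B_Y)$ with the discrepancies of divisors over $X$ lying over $E$. After a birational modification $\sigma:Y'\to Y$ on which $E$ becomes a divisor $P'$, I would replace $f:(X,B)\to Y$ by an induced pre-basic slc-trivial fibration $f':(X',B_{X'})\to Y'$ such that $(X',\Supp B_{X'}+\Supp (f')^*P')$ is a simple normal crossing pair and each component $Q'_j$ of $(f')^*P'$ dominates $P'$; this is possible after further blow-ups by the usual resolution results (\cite[Theorem 1.4]{bierstone-vera}). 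Here the hypothesis $\mathbf K+\mathbf B=\overline{K_Y+B_Y}$ is what guarantees that passing to $Y'$ does not change the relevant discrepancies on the $Y$-side, i.e. that $B_{Y'}$ is exactly the trace of $\mathbf B$ and $K_{Y'}+B_{Y'}=\sigma^*(K_Y+B_Y)$.

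Next, the key local inequality. Over the generic point of $P'$, formula \eqref{c-eq4.2} gives $\mult_{P'}B_{Y'}=\max_j\frac{d_j+w_j-1}{w_j}$ where $d_j=\mult_{Q'_j}B_{X'}$ and $w_j=\mult_{Q'_j}(f')^*P'$. Rewriting, $1-\mult_{P'}B_{Y'}=\min_j\frac{1-d_j}{w_j}$, so $a(P',Y',B_{Y'})=\min_j\frac{a(Q'_j,X',B_{X'})+1-w_j+w_j}{w_j}$ — more precisely $a(P',Y',B_{Y'})+1=\min_j\frac{a(Q'_j,X',B_{X'})+1}{w_j}$. From this identity the two desired inequalities follow: on one hand $a(P',Y',B_{Y'})+1\le a(Q'_j,X',B_{X'})+1$ whenever $w_j\ge 1$ and $a(Q'_j,X',B_{X'})+1\ge 0$ (which holds on the slc locus), giving $\mld_Z(Y,B_Y)\le \mld_{f^{-1}(Z)}(X,B)$; on the other hand $a(Q'_j,X',B_{X'})+1\le w_j\bigl(a(P',Y',B_{Y'})+1\bigr)\le N\bigl(a(P',Y',B_{Y'})+1\bigr)$ for a uniform bound $N$ on the multiplicities $w_j$, giving the reverse inequality up to the factor $1/N$. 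I would need to check that centers of divisors over $X$ in $f^{-1}(Z)$ correspond to centers over $Y$ in $Z$ (using that every stratum of $X$ dominates $Y$, condition (2)), so that the two inf's really range over matching families.

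The main obstacle is establishing the uniform integer $N$: a priori the multiplicities $w_j=\mult_{Q'_j}(f')^*P'$ can grow as $\sigma:Y'\to Y$ ranges over all birational models, so one must argue that only finitely many values are relevant, or bound them in terms of fixed data of $f:(X,B)\to Y$. I expect this to follow from a Noetherian/finiteness argument: for fixed $(X,B)\to Y$ there is a log resolution after which all further relevant $w_j$ are controlled by the multiplicities appearing on that one model together with the ramification of $\sigma$ along the strata of $X$, but making this precise — and uniform over all $Z$ simultaneously — is the delicate point. A secondary subtlety is handling divisors $P'$ over $Y$ whose preimage has components $Q'_j$ with $w_j$ not all mapping onto $P'$; here I would first pass to a model where $f$ is "prepared" along $P'$ in the sense of Remark \ref{c-rem4.7}, reducing to the clean formula, and check that this preparation step only improves (or leaves unchanged) both sides of the inequality. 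Once the local identity $a(P',Y',B_{Y'})+1=\min_j\frac{a(Q'_j,X',B_{X'})+1}{w_j}$ is in hand with a uniform bound on the $w_j$, the theorem follows by taking infima over all $Z$-centered divisors.
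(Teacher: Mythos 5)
Your local computation is correct and essentially matches the paper's: from Remark \ref{c-rem4.7} (equations \eqref{c-eq4.1} and \eqref{c-eq4.2}) you derive the identity $a(P',Y',B_{Y'})+1=\min_j\frac{a(Q'_j,X',B_{X'})+1}{w_j}$, and your treatment of the right-hand inequality $\mld_Z(Y,B_Y)\leq\mld_{f^{-1}(Z)}(X,B)$ via the flattening theorem is exactly the paper's argument. The gap is in the passage to a uniform $N$ in the left-hand inequality, which you correctly identify as the delicate point but leave unresolved.

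Your proposed route, bounding the multiplicities $w_j=\mult_{Q'_j}(f')^*P'$ uniformly over all birational models $Y'\to Y$ and all $Z$, is not the paper's mechanism and is not what you should be aiming for: those multiplicities live on an arbitrarily deep tower of modifications of both $X$ and $Y$, and the ``Noetherian/finiteness argument'' you gesture at is precisely the content you would still need to supply. The paper circumvents this entirely. For each fixed $Z$ it takes a single log resolution $\sigma:Y'\to Y$ adapted to $Z$, runs your local computation, and obtains only a $Z$-dependent constant $N_Z$ with $\mld_Z(Y,B_Y)\geq\frac{1}{N_Z}\mld_{f^{-1}(Z)}(X,B)$; in particular $\mld_Z(Y,B_Y)>0$ whenever $\mld_{f^{-1}(Z)}(X,B)>0$. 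The uniformization then comes from a global discreteness theorem rather than any bound on multiplicities: by \cite[Theorem 2.3]{ambro-mld} the set $\{\mld_{f^{-1}(Z)}(X,B)\mid Z\subsetneq Y\}$ is a finite subset of $\mathbb Q_{\geq 0}\cup\{-\infty\}$, and since $K_Y+B_Y$ is $\mathbb Q$-Cartier (this is exactly where the hypothesis $\mathbf K+\mathbf B=\overline{K_Y+B_Y}$ enters) the values $a(\cdot,Y,B_Y)$ all lie in $\frac{1}{r}\mathbb Z$ for a fixed positive integer $r$, so $\mld_Z(Y,B_Y)\geq\frac{1}{r}$ whenever it is positive. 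A single $N$ valid for all $Z$ falls out immediately, with no control on $w_j$ required. Replacing your attempted multiplicity bound by this finiteness-of-mld argument is the missing ingredient.
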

\begin{proof} 
We take a proper birational morphism 
$\sigma:Y'\to Y$ from a smooth variety $Y'$ such that 
$\sigma^{-1}(Z)$ is a divisor on $Y'$ and 
that $\Supp\sigma^{-1}(Z)\cup \Supp \mathbf B_{Y'}$ is included 
in a simple normal crossing divisor $\Sigma_{Y'}$. 
Let $f': (X', B_{X'})\to Y'$ be an induced 
pre-basic slc-trivial fibration with 
$$
\xymatrix{(X, B) \ar[d]_-f& (X', B_{X'})\ar[d]^-{f'}\ar[l]_\mu\\ 
Y & Y'. \ar[l]^-\sigma
}
$$
We may further assume that $\Supp B_{X'}\cup \Supp (f')^*\Sigma_{Y'}$ 
is included in a simple normal crossing divisor $\Sigma_{X'}$. 
Let $\Sigma_{Y'}=\sum _l P_l$ (resp.~$\Sigma_{X'}=\sum 
_j Q_j$) be the irreducible decomposition of $\Sigma_{Y'}$ (resp.~$\Sigma_{X'}$). 
We may assume that there exists $j_0$ such that 
$Q_{j_0}\subset (\sigma\circ f')^{-1}(Z)$ and 
$a(Q_{j_0}, X, B)+1=\mld _{f^{-1}(Z)}(X, B)$ when 
$\mld _{f^{-1}(Z)}(X, B)\geq 0$. 
When $\mld _{f^{-1}(Z)}(X, B)=-\infty$, we assume that 
$a(Q_{j_0}, X, B)+1<0$ holds. 
If we need, we take more blow-ups of $Y'$ and may assume that 
$f'(Q_{j_0})=P_{l_0}$ for some $l_0$ with the aid of the 
flattening theorem (see \cite[Th\'eor\`eme (5.2.2)]{raynaud-g}). 
By \eqref{c-eq4.1}, we obtain 
$$
a(P_{l_0}, Y, B_Y)+1=a(P_{l_0}, Y', \mathbf B_{Y'})+1\leq 
a(Q_{j_0}, X', B_{X'})+1=a(Q_{j_0}, X, B)+1. 
$$ 
Therefore, if $\mld_{f^{-1}(Z)}(X, B)\geq 0$, then 
$a(P_{l_0}, Y, B_Y)+1\leq \mld _{f^{-1}(Z)}(X, B)$. 
When $\mld_{f^{-1}(Z)}(X, B)=-\infty$, we get $a(P_{l_0}, Y, B_Y)+1<0$. 
Hence, we obtain that 
$$\mld_Z( Y, B_Y)\leq \mld _{f^{-1}(Z)}(X, B)$$ always holds. 

If $\mld _{f^{-1}(Z)}(X, B)=-\infty$, then 
$$
\frac{1}{N}\mld _{f^{-1}(Z)}(X, B)\leq \mld _Z(Y, B_Y)
$$ 
obviously holds for any positive integer $N$. 
Therefore, from now on, we may assume that $\mld _{f^{-1}(Z)}(X, B)\geq 0$. 
Let $P_l$ be any prime divisor contained in $\sigma^{-1}(Z)$. 
Then 
\begin{equation*}
\begin{split}
a(P_l, Y, B_Y)+1&=a(P_l, Y', \mathbf B_{Y'})+1\\& \geq \frac{1}{N_l}
\left(\min _{f'(Q_j)=P_l}
a(Q_j, X', B_{X'})+1\right)\\& \geq \frac{1}{N_l}\mld _{f^{-1}(Z)}(X, B)
\end{split}
\end{equation*} 
for some positive integer $N_l$ 
by \eqref{c-eq4.1}. By \cite[Theorem 2.3]{ambro-mld}, 
we can check that 
$$
\left\{\mld_{f^{-1}(Z)}(X, B)\, |\, Z\subsetneq Y\right\}
$$ 
is a finite subset of $\mathbb Q_{\geq 0}\cup \{-\infty\}$. 
Therefore, we can take a positive integer $N$ satisfying the 
desired properties. 
\end{proof}
 
\section{Cyclic cover of the generic fiber}\label{c-sec6}

The main purpose of this section is to 
interpret moduli parts of 
basic slc-trivial fibrations Hodge theoretically. 
We closely follow the formulation in \cite[Section 5]{ambro3}. 
The approach in \cite[Section 5]{ambro3} 
is essentially the same as those in \cite[Section 5, Part II]{mori} and 
\cite[Section 4]{fujino-certain}. 

\medskip 

Let $f:(X, B)\to Y$ be a basic slc-trivial fibration 
such that $Y$ is quasi-projective. 
Let $F$ be a general fiber of $f:X\to Y$. 
We put $$
b(F, B_F):=\min \{ m\in \mathbb Z_{>0} \, | \, m(K_F+B_F)\sim 0\} 
$$ 
where $K_F+B_F=(K_X+B)|_F$. 
Since 
$Y$ is quasi-projective, 
we can take a $\mathbb Q$-Cartier $\mathbb Q$-divisor 
$D$ on $Y$ and $\varphi\in \Gamma (X, \mathcal K^*_X)$ such that 
$$
K_X+B+\frac{1}{b}(\varphi)=f^*D
$$
holds, where $b=b(F, B_F)$. 
Therefore, we have 
\begin{equation}\label{d-eq6.1}
K_X+B+\frac{1}{b}(\varphi)=
f^*(K_Y+B_Y+M_Y),  
\end{equation} 
as in \ref{c-say4.5}. 

\begin{say}[Cyclic cover of the generic fiber under the 
assumption that $Y$ is smooth]\label{d-say6.1} 
From now on, we assume that $Y$ is smooth. In particular, 
$K_Y$ is Cartier. 
By taking some suitable blow-ups, 
we may assume that $\Supp\left(B-f^*(B_Y+M_Y)\right)$ 
is a simple normal crossing 
divisor on $X$, $(B^h)^{=1}$ is Cartier, 
and every stratum of $(X, (B^h)^{=1})$ is dominant onto $Y$ 
(see, for example, 
\cite[Section 8]{bierstone-vera} and 
\cite[Lemma 2.11]{fujino-projectivity}). 
Let $\pi:\widetilde X\to X$ be the $b$-fold cyclic cover 
associated to \eqref{d-eq6.1}. 
Then we have the following commutative diagram. 
$$
\xymatrix{
(X, B)\ar[d]_-f & \widetilde X \ar[dl]^-{\widetilde f}
\ar[l]_-\pi\\ 
Y & 
}
$$ 
More explicitly, we put 
$$
\Delta=K_{X/Y}+B-f^*(B_Y+M_Y),  
$$ 
where $K_{X/Y}=K_X-f^*K_Y$. 
Then $b\Delta=-(\varphi)\sim 0$ holds by definition. 
We note that the support of $\{\Delta\}$ is a simple normal crossing divisor 
on $X$. 
We can define an $\mathcal O_X$-algebra 
structure of $\bigoplus _{i=0}^{b-1}\mathcal 
O_X(\lfloor i\Delta\rfloor)$ by $b\Delta=(\varphi^{-1})\sim 0$. 
We note that 
$$
\mathcal O_X(\lfloor i\Delta\rfloor)\times 
\mathcal O_X(\lfloor j\Delta\rfloor)\to \mathcal O_X
(\lfloor (i+j)\Delta\rfloor)
$$ 
is well defined for $0\leq i, j\leq b-1$ by 
$\lfloor i\Delta\rfloor +\lfloor j\Delta\rfloor\leq 
\lfloor (i+j)\Delta\rfloor$ and that 
$$
\mathcal O_X(\lfloor (i+j)\Delta\rfloor)
\simeq \mathcal O_X(\lfloor (i+j-b)\Delta\rfloor)
$$ 
for $i+j\geq b$ by $b\Delta=(\varphi^{-1})\sim 0$. 
In this situation, we have the following description of 
$\widetilde X$: 
\begin{equation}\label{d-eq6.2}
\widetilde X=\Spec _X\bigoplus _{i=0}^{b-1} \mathcal O_X(\lfloor i\Delta
\rfloor). 
\end{equation} 
Let $\zeta$ be a fixed primitive $b$-th root of unity and let $G=\langle 
\rho\rangle$ be the cyclic group $\mathbb Z/b\mathbb Z$. 
Then $G$ acts on $\bigoplus _{i=0}^{b-1}\mathcal O_X(\lfloor 
i\Delta\rfloor)$ by $\mathcal O_X$-algebra 
homomorphisms defined by: 
$$
\rho(l)=\zeta^i l
$$ 
for a local section $l$ of $\mathcal O_X(\lfloor i\Delta\rfloor)$. 

Here, we give an alternative 
description of $\widetilde X$ for the reader's convenience, 
which is more familiar than \eqref{d-eq6.2}. 
We put $\mathcal L=\mathcal O_X(-\lfloor \Delta\rfloor)$. 
Then we see that  
$b\{\Delta\}=(\varphi^{-1})-b\lfloor \Delta\rfloor\in |\mathcal L^b|$. 
In this notation, we have  
\begin{equation}\label{d-eq6.3}
\widetilde X=\Spec _X\bigoplus _{i=0}^{b-1} \mathcal L^{-i}
\left(\left\lfloor \frac{ib\{\Delta\}}{b}\right\rfloor\right)=
\Spec_X
\bigoplus _{i=0}^{b-1} \mathcal L^{-i} (\lfloor i\{\Delta\}\rfloor). 
\end{equation}
We note that 
$$
\mathcal L^{-i} (\lfloor i\{\Delta\}\rfloor)=\mathcal O_X(i\lfloor \Delta\rfloor 
+\lfloor i\{\Delta\}\rfloor)=\mathcal O_X(\lfloor i\Delta\rfloor). 
$$ 
Thus this usual description of the $b$-fold cyclic cover \eqref{d-eq6.3} 
coincides with the above description \eqref{d-eq6.2}. 
We note that \cite[2.3 Ramified covers]{kollar} may be 
helpful. 

By construction, $\pi:\widetilde X\to X$ is \'etale outside 
$\Supp \{\Delta\}$. We note that, over a neighborhood of 
the generic point of every irreducible component of 
$\Supp \{\Delta\}$, 
$\widetilde X$ is normal and 
$\pi:\widetilde X\to X$ is a well-known $b$-fold 
cyclic cover of $X$ associated to $b\{\Delta\}\in |\mathcal L^b|$. 
By construction again, there exists 
$\widetilde \varphi\in \Gamma (\widetilde X, 
\mathcal K_{\widetilde X}^*)$ such that 
$\pi^*\varphi=\widetilde \varphi^b$ in 
$\Gamma (\widetilde X, 
\mathcal K_{\widetilde X}^*)$. 
We note that $\widetilde X$ is connected by the definition of $b=b(F, B_F)$. 
We also note that $G=\langle \rho\rangle$ acts on $\widetilde \varphi$ by 
$\rho(\widetilde \varphi)=\zeta^{-1}\widetilde \varphi$. 

We define $B_{\widetilde X}$ by 
the formula 
$K_{\widetilde X}+B_{\widetilde X}=\pi^*(K_X+B)$. We can 
easily see that $(B^h_{\widetilde X})^{=1}=\pi^*((B^h)^{=1})$ holds. 
We can also check that 
$(\widetilde X, (B^h_{\widetilde X})^{=1})$ is semi-log canonical and 
that every slc stratum of $(\widetilde X, (B^h_{\widetilde X})^{=1})$ 
is dominant onto $Y$. 
Let $d:V\to \widetilde X$ be a projective birational morphism 
from a simple normal crossing variety $V$. 
Then we have the following commutative diagram. 
\begin{equation}\label{d-eq6.4}
\xymatrix{
(X, B)\ar[d]_-f & \widetilde X \ar[dl]_-{\widetilde f}
\ar[l]_-\pi& (V, B_V)\ar[dll]^-h\ar[l]_-d\\ 
Y & & 
}
\end{equation}
We assume that $d$ is an isomorphism over the generic point of 
every slc stratum of $(\widetilde X, (B^h_{\widetilde X})^{=1})$. 
We put $g:=\pi\circ d: V\to X$ and 
$$
K_V+B_V=d^*(K_{\widetilde X}+B_{\widetilde X})=g^*(K_X+B). 
$$ 
By taking some more blow-ups if necessary, 
we may assume that $(B^h_V)^{=1}$ is Cartier (see 
\cite[Section 8]{bierstone-vera} and 
\cite[Lemma 2.11]{fujino-projectivity}). 
We put $\psi=d^*\widetilde \varphi \in \Gamma (V, \mathcal K_V^*)$. 
Thus we have $g^*\varphi=\psi^b\in \Gamma (V, \mathcal K_V^*)$. 
Therefore, 
\begin{equation}\label{c-eq6.5}
K_V+B_V+(\psi)=h^*(K_Y+B_Y+M_Y)
\end{equation}
holds. We further assume that $(V, B_V)$ is a simple normal crossing 
pair. 
By construction, 
$$
\pi_*\omega_{\widetilde X/Y}((B^h_{\widetilde X})^{=1})
=\bigoplus _{i=0}^{b-1} \omega_{X/Y}((B^h)^{=1})\otimes 
\mathcal O_X(\lceil -i\Delta\rceil). 
$$ 
We note that $(B^h_{\widetilde X})^{=1}=\pi^*((B^h)^{=1})$ holds and 
that $G$ acts on $\pi_*\omega _{\widetilde X/Y}((B^h_{\widetilde X})^{=1})$ 
naturally. 
Since $K_V+(B^h_V)^{=1}=d^*(K_{\widetilde X}+(B^h_{\widetilde X})^{=1})
+E$, where $E$ is a $d$-exceptional 
$\mathbb Q$-divisor such that 
$\lceil E\rceil \geq 0$, 
$d_*\omega_{V/Y}((B^h_V)^{=1})=
\omega_{\widetilde X/Y}((B^h_{\widetilde X})
^{=1})$ holds. 
Therefore, the following eigensheaf decomposition holds: 
\begin{equation}\label{c-eq6.6}
\begin{split}
h_*\omega_{V/Y}((B^h_V)^{=1})
&=\widetilde f_*\omega_{\widetilde X/Y} ((B^h_{\widetilde X})^{=1})
\\ &=\bigoplus _{i=0}^{b-1} f_*\mathcal O_X(\lceil 
(1-i)K_{X/Y}
-iB+if^*B_Y+if^*M_Y\rceil +(B^h)^{=1})
\end{split}
\end{equation} 
since $\Delta=K_{X/Y}+B-f^*(B_Y+M_Y)$. 
We note that $$\rank f_*\mathcal O_X(\lceil -B+f^*B_Y+f^*M_Y\rceil+
(B^h)^{=1})=\rank f_*\mathcal O_X(\lceil -(B^{<1})\rceil)=1$$ by 
Definition \ref{c-def4.1} (5). 
\end{say}

\begin{say}
[Cyclic cover of the generic fiber when $Y$ may be singular]
\label{d-say6.2} 
Let $Y_0$ be a nonempty Zariski open set 
of $Y$ such that $Y_0$ is smooth. 
By restricting \eqref{d-eq6.1} to $Y_0$, 
we have 
\begin{equation}\label{d-eq6.7}
K_{X_0}+B_0+\frac{1}{b}(\varphi)=f_0^*(K_{Y_0}+B_{Y_0}+M_{Y_0})
\end{equation} and 
can construct the following commutative diagram 
\begin{equation}\label{d-eq6.8}
\xymatrix{
(X_0, B_0)\ar[d]_-{f_0} & \widetilde X_0 \ar[dl]_-{\widetilde f_0}
\ar[l]_-{\pi_0}& (V_0, B_{V_0})\ar[dll]^-{h_0}\ar[l]_-{d_0}\\ 
Y_0& & 
}
\end{equation} 
similar to \eqref{d-eq6.4} since $Y_0$ is smooth. 
We take a projective birational modification $\sigma:Y'\to Y$ 
and construct an induced basic slc-trivial fibration $f':(X', B_{X'})
\to Y'$ of $f:(X, B)\to Y$ by $\sigma: Y'\to Y$, and so on. 
Then we get the following commutative diagram similar to 
the diagram \eqref{d-eq6.4}.   
\begin{equation}\label{d-eq6.9}
\xymatrix{(X', B_{X'})\ar[d]_-{f'} & \widetilde X' \ar[dl]_-{\widetilde f'}
\ar[l]_-{\pi'}& (V', B_{V'})\ar[dll]^-{h'}\ar[l]_-{d'}\\ 
Y' & & 
}
\end{equation} 
We can assume that this new diagram \eqref{d-eq6.9} 
coincides with the 
diagram \eqref{d-eq6.8} over some nonempty Zariski open set of $Y'$ 
by \cite[Theorem 1.4]{bierstone-vera}. 
By replacing $f:(X, B)\to Y$ and $(V, B_V)$ with 
$f': (X', B_{X'})\to Y'$ and $(V', B_{V'})$ respectively, 
we further assume that the following properties hold 
for \eqref{d-eq6.4}. 
\begin{itemize}
\item[(a)] $Y$ is a smooth quasi-projective 
irreducible variety, and $X$ and $V$ are quasi-projective 
simple 
normal crossing varieties. 
\item[(b)] there exist simple normal crossing divisors 
$\Sigma_X$, $\Sigma_V$, and $\Sigma_Y$ 
on $X$, $V$, and $Y$, respectively.   
\item[(c)] $f$ and $h$ are projective surjective 
morphisms. 
\item[(d)] the supports of 
$B$, $B_V$, and $B_Y$, $M_Y$ are 
contained in $\Sigma_X$, $\Sigma_V$, and $\Sigma_Y$, respectively. 
\item[(e)] every stratum of $(X, \Sigma^h_X)$ 
and $(V, \Sigma^h_V)$ is smooth 
over $Y\setminus \Sigma_Y$. 
\item[(f)] $f^{-1}(\Sigma_Y)\subset \Sigma_X$, $f(\Sigma^v_X)\subset 
\Sigma_Y$, and $h^{-1}(\Sigma_Y)\subset 
\Sigma_V$, $h(\Sigma^v_V)\subset 
\Sigma_Y$. 
\item[(g)] $(B^h)^{=1}$ and $(B^h_V)^{=1}$ are Cartier. 
\end{itemize}
\end{say}

By definition and construction, 
we can easily check the following basic 
properties of $h: (V, B_V)\overset{g}\longrightarrow (X, B)
\overset{f}\longrightarrow Y$. 
Proposition \ref{d-prop6.3} is the main result of this section. 

\begin{prop}\label{d-prop6.3}
We have the following properties. 
\begin{itemize}
\item[(i)] $\pi:\widetilde X\to X$ is a Galois cover and its Galois 
group $G$ is $\mathbb Z/b\mathbb Z$. 
\item[(ii)] $h:(V, B_V)\to Y$ is a pre-basic slc-trivial fibration. 
\item[(iii)] $f:(X, B)\to Y$ and $h: (V, B_V)\to Y$ induce 
the same discriminant and moduli part on $Y$. 
\item[(iv)] Assume that, for any irreducible component $P$ of 
$\Supp \Sigma_Y$, there exists a prime divisor $Q$ on $V$ such that 
$\mult _Q(-B_V+h^*B_Y)=0$, 
$h(Q)=P$, and 
$\mult _Qh^*P=1$. 
Then $M_Y$ is an integral divisor and $\mathcal O_Y(M_Y)$ is a 
direct summand of $h_*\mathcal O_V(K_{V/Y}+(B^h_V)^{=1})$. 
\item[(v)] In {\em{(iv)}}, we further assume that 
all the local monodromies on the local system 
$$R^{\dim V-\dim Y} (h|_{V^*})_*\iota_!\mathbb Q_{V^*\setminus 
(B_{V^*}^h)^{=1}}$$ around 
$\Sigma_Y$ are unipotent, where 
$Y^*=Y\setminus \Sigma_Y$, $V^*=h^{-1}(Y^*)$, $B_{V^*}=(B_V)|_{V^*}$, 
and $\iota: V^*\setminus (B_{V^*}^h)^{=1}\hookrightarrow V^*$ is the natural 
open immersion. 
Then $$\left(h_*\mathcal O_V(K_{V/Y}+(B^h_V)^{=1})\right)|_W$$ 
is a nef locally free sheaf on $W$, 
where $W$ is any complete subvariety of $Y$. 
In particular, $(M_Y)|_W$ is a nef 
Cartier divisor on $W$. 
\end{itemize}
\end{prop}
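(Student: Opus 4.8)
The plan is to treat the five parts in turn, with almost all of the real work going into (iv). Part (i) is read off from the construction in \ref{d-say6.1}: the group $G=\mathbb Z/b\mathbb Z=\langle\rho\rangle$ acts on $\pi_*\mathcal O_{\widetilde X}=\bigoplus_{i=0}^{b-1}\mathcal O_X(\lfloor i\Delta\rfloor)$ by $\mathcal O_X$-algebra automorphisms with $\rho$ acting as $\zeta^i$ on the $i$-th summand, so the invariant subalgebra is the $i=0$ summand $\mathcal O_X$ and $\widetilde X/G=X$; since $\deg\pi=b=|G|$ and $\widetilde X$ is connected (the cyclic cover of the general fibre $F$ being connected by the minimality of $b=b(F,B_F)$), $\pi$ is a Galois cover with group $\mathbb Z/b\mathbb Z$. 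For (ii) I would verify conditions (1)--(4) of Definition \ref{c-def4.1}: (1) is a hypothesis; (4) is \eqref{c-eq6.5}, in which $K_Y+B_Y+M_Y$ equals the $\mathbb Q$-Cartier divisor $D$ of \eqref{d-eq6.1}; (3) holds because over the generic point of $Y$ the map $g=\pi\circ d$ is a finite crepant cover of $(F,B_F)$ (crepant for $K_X+B$ by the definition of $B_{\widetilde X}$, and a finite crepant cover preserves sub log canonicity) followed by a birational morphism, so $(V,B_V)$ stays sub log canonical there, whence $B_V=B_V^{\le 1}$; and for (2) the dominance of strata of $V$ over $Y$ is arranged in \ref{d-say6.2}, while $h_*\mathcal O_V\simeq\mathcal O_Y$ follows from normality of $Y$, since $h_*\mathcal O_V$ is a coherent torsion-free $\mathcal O_Y$-algebra containing $f_*\mathcal O_X=\mathcal O_Y$ of generic rank one (the general fibre of $h$ is connected and reduced, being birational to $\widetilde F$), hence integral over $\mathcal O_Y$, hence $=\mathcal O_Y$. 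For (iii), $f$ and $h$ are built with the same $D$, so it suffices to match the discriminants: for a prime divisor $P$ on $Y$, whether $(X,B+tf^*P)$ is sub slc over the generic point of $P$ is equivalent, via the finite crepant cover $\pi$ and the birational $d$, to the same question for $(V,B_V+th^*P)$, so $b_P$ and hence $B_Y$ agree, and then $M_Y^{(h)}=D-K_Y-B_Y=M_Y$.

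For (iv), I would first establish that $M_Y$ is integral. Rewriting \eqref{c-eq6.5} as $h^*M_Y=K_{V/Y}+B_V-h^*B_Y+(\psi)$ and taking $\mult_Q$ of both sides for the divisor $Q$ supplied by the hypothesis, the relations $\mult_Q h^*P=1$, $\mult_Q(-B_V+h^*B_Y)=0$ together with $\Supp M_Y\subset\Sigma_Y$ give $\mult_P M_Y=\mult_Q K_{V/Y}+\mult_Q(\psi)\in\mathbb Z$ for every component $P$ of $\Sigma_Y$; so $M_Y$ is integral and, since $Y$ is smooth, Cartier. Next, Theorem \ref{c-thm3.1}(i) applies to $h:(V,(B^h_V)^{=1})\to Y$ — because $(B^h_V)^{=1}$ is reduced and, by the construction in \ref{d-say6.2}, every stratum of $(V,(B^h_V)^{=1})$ is dominant onto $Y$ and smooth over $Y^*=Y\setminus\Sigma_Y$ — so $h_*\omega_{V/Y}((B^h_V)^{=1})$ is locally free, and with the $G$-eigensheaf decomposition \eqref{c-eq6.6} each summand is a direct summand of a locally free sheaf over the smooth $Y$, hence locally free. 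As $f^*M_Y$ is now integral, the $i=1$ summand $\mathcal N=f_*\mathcal O_X(\lceil -B+f^*B_Y+f^*M_Y\rceil+(B^h)^{=1})$ of \eqref{c-eq6.6} factors as $\mathcal N\simeq\mathcal O_Y(M_Y)\otimes\mathcal M$ with $\mathcal M=f_*\mathcal O_X(\lceil -B+f^*B_Y\rceil+(B^h)^{=1})$, a line bundle on $Y$.

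The remaining and most delicate point of (iv) is $\mathcal M\simeq\mathcal O_Y$. I would check that $\lceil -B+f^*B_Y\rceil+(B^h)^{=1}$ is effective: over $Y\setminus\Sigma_Y$ it restricts to $\lceil -(B^{<1})\rceil\ge 0$, and along a component $P$ of $\Sigma_Y$ its coefficient at a component $Q_j$ of $f^*P=\sum_j w_j Q_j$ equals $\lceil w_j\mult_P B_Y-d_j\rceil\ge w_j-1\ge 0$ by formula \eqref{c-eq4.2}; this gives an inclusion $\mathcal O_Y\hookrightarrow\mathcal M$. Condition (5) of Definition \ref{c-def4.1} then says $\mathcal M$ has generic rank one, and one checks $\mathcal M_{\eta_P}=\mathcal O_{Y,\eta_P}$ for every prime divisor $P$ — for $P\not\subset\Sigma_Y$ because $\lceil -B+f^*B_Y\rceil+(B^h)^{=1}$ has no vertical component over $P$, and for $P\subset\Sigma_Y$ because the component $Q_{j_0}$ attaining the maximum in \eqref{c-eq4.2} has coefficient precisely $w_{j_0}-1<w_{j_0}$, which rules out any section with a pole along $P$. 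Hence $\mathcal M\simeq\mathcal O_Y$, so $\mathcal O_Y(M_Y)\simeq\mathcal N$ is a direct summand of $h_*\mathcal O_V(K_{V/Y}+(B^h_V)^{=1})=h_*\omega_{V/Y}((B^h_V)^{=1})$.

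Finally, (v) will follow from Theorem \ref{c-thm3.1}(ii): the pair $(V,(B^h_V)^{=1})$ with $h:V\to Y$ meets its hypotheses by the construction together with the stated unipotency assumption, so $(h_*\mathcal O_V(K_{V/Y}+(B^h_V)^{=1}))|_W$ is nef and locally free for every complete subvariety $W\subset Y$; since $\mathcal O_Y(M_Y)$ is a direct summand of it by (iv), $\mathcal O_Y(M_Y)|_W$ is a quotient of a nef locally free sheaf, hence nef, i.e.\ $(M_Y)|_W$ is a nef Cartier divisor on $W$. The step I expect to be genuinely subtle is the identification $\mathcal M\simeq\mathcal O_Y$ in (iv): that the $i=1$ eigensheaf is \emph{exactly} $\mathcal O_Y(M_Y)$ and not a twist of it by a line bundle supported over $\Sigma_Y$. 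This is where both condition (5) and the well-behaved divisor $Q$ of the hypothesis — whose existence amounts to a semistable-reduction-type normalization in codimension one and is also what makes $M_Y$ integral — are indispensable, through the codimension-one analysis with \eqref{c-eq4.2}.
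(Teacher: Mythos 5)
Your proof is correct. Parts (i)--(iii) and (v) are handled essentially as in the paper (for (ii) the paper uses Stein factorization plus Zariski's main theorem rather than the torsion-free--algebra argument, but these amount to the same thing). The interesting divergence is in (iv). After establishing integrality of $M_Y$ and invertibility of the $\zeta^{-1}$-eigensheaf $\mathcal N$ exactly as the paper does, the paper proves $\mathcal O_Y(M_Y)=\mathcal N$ by producing an explicit rational section $\tau$ of $\mathcal N$ (coming from $K_{V/Y}+(B^h_V)^{=1}+(\psi)$ on $V$), and then, for a rational function $a$ on $Y$, comparing when $a\cdot\tau$ is regular with when $(a)+M_Y$ is effective via the crepant identity \eqref{d-eq6.13} on $V$; the hypothesis that $-B_V+h^*B_Y$ contains no full fiber over codimension-one points of $Y$ is used for the direction $a\cdot\tau$ regular $\Rightarrow (a)+M_Y\geq 0$. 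You instead factor $\mathcal N\simeq\mathcal O_Y(M_Y)\otimes\mathcal M$ with $\mathcal M=f_*\mathcal O_X(\lceil -B+f^*B_Y\rceil+(B^h)^{=1})$ and prove $\mathcal M\simeq\mathcal O_Y$ by a codimension-one pole analysis on $X$, using condition~(5) for generic rank one and the explicit coefficient formula \eqref{c-eq4.2} to see that the component $Q_{j_0}$ realizing the maximum has coefficient exactly $w_{j_0}-1$ in the boundary divisor, ruling out poles along components of $\Sigma_Y$. Your route is a bit more elementary and transparent (it isolates condition~(5) and reduces the identification to a combinatorial inequality), and it does not directly invoke the hypothesis of (iv) in the $\mathcal M\simeq\mathcal O_Y$ step (only for integrality, which is the correct division of labour). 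One small point worth tightening: your effectivity of $\lceil -B+f^*B_Y\rceil+(B^h)^{=1}$ is only established at the generic points of prime divisors of $Y$; vertical divisors on $X$ lying over codimension-$\geq 2$ subsets of $Y$ may carry negative coefficients, so the inclusion $\mathcal O_Y\hookrightarrow\mathcal M$ should be asserted after shrinking $Y$ to a big open set, which is harmless since $\mathcal M$ and $\mathcal O_Y$ are both line bundles (the paper makes this codimension-two reduction explicit).
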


\begin{proof}[Proof of Proposition \ref{d-prop6.3}]
In the above construction, we have already described 
the action of $G=\mathbb Z/b\mathbb Z$ on $\widetilde X$ explicitly. 
Therefore, (i) is obvious. 
By the definition of $b=b(F, B_F)$, the general fiber of 
$h:V\to Y$ is connected. 
We consider the Stein factorization 
$$
h: V\longrightarrow Z:= \Spec _Y h_*\mathcal O_V \overset{\alpha}
{\longrightarrow} Y
$$ 
of $h: V\to Y$. 
Note that $\alpha$ is an isomorphism over a nonempty 
Zariski open set of $Y$ since the general fibers 
of $h$ are connected. 
Since every irreducible component of $V$ is dominant 
onto $Y$, $Z$ is an irreducible variety. 
Therefore, by Zariski's main theorem, 
$\alpha$ is an isomorphism. 
Hence we see that the natural map 
$\mathcal O_Y\to h_*\mathcal O_V$ is an isomorphism. 
By construction, $K_V+B_V\sim _{\mathbb Q, f}0$ 
and $B_V=B^{\leq 1}_V$ holds over 
the generic point of $Y$. 
Therefore, 
$h: (V, B_V)\to Y$ is a pre-basic slc-trivial 
fibration. This is (ii). 
By construction again, we see that $(X, B+tf^*P)$ is 
sub slc over the generic point of $P$ if and 
only if $(V, B_V+th^*P)$ is sub slc over the 
generic point of $P$. 
Therefore, $f:(X, B)\to Y$ and $h:(V, B_V)\to Y$ induce the 
same discriminant $\mathbb Q$-divisor $B_Y$. 
This implies (iii) since $M_Y=D-K_Y-B_Y$. 

From now on, we will prove (iv). 
We note that 
$$K_X+B+\frac{1}{b}(\varphi)=f^*(K_Y+B_Y+M_Y)$$ and 
\begin{equation}\label{d-eq6.10}
K_V+B_V+(\psi)=h^*(K_Y+B_Y+M_Y). 
\end{equation}
By \eqref{d-eq6.10} and $\mult _Q (-B_V+h^*B_Y)=0$ in (iv), 
we obtain $\mult _Q h^*M_Y\in \mathbb Z$. 
Since $\mult _Q h^*P=1$ by the assumption in (iv), 
$M_Y$ is an integral divisor on $Y$. 
By Theorem \ref{c-thm3.1}, 
$h_*\mathcal O_V(K_{V/Y}+(B^h_V)^{=1})$ is a locally 
free sheaf. As we saw above, by construction and assumption, we have the 
following eigensheaf decomposition 
\begin{equation}\label{d-eq6.11}
\begin{split}
h_*\omega_{V/Y}((B^h_V)^{=1})
&=\widetilde f_*\omega_{\widetilde X/Y} 
((B^h_{\widetilde X})^{=1})
\\ &=\bigoplus _{i=0}^{b-1} f_*\mathcal O_X(\lceil 
(1-i)K_{X/Y}
-iB+if^*B_Y+if^*M_Y\rceil +(B^h)^{=1}). 
\end{split}
\end{equation}
We note that the eigensheaf corresponding to the eigenvalue 
$\zeta^{-1}$ is 
$$\mathcal N=
f_*\mathcal O_X(\lceil -B+f^*B_Y+f^*M_Y\rceil +(B^h)^{=1}), $$ 
which is 
an invertible sheaf on $Y$. 
From now on, we will prove that $\mathcal O_Y(M_Y)=\mathcal N$ holds. 
Since $\mathcal O_Y(M_Y)$ and $\mathcal N$ are both 
invertible, we can freely replace $Y$ with its Zariski open set $Y^0$ such that 
$\codim _Y(Y\setminus Y^0)\geq 2$. 
Therefore, we can assume that $\lceil -B+f^*B_Y\rceil +(B^h)^{=1}$ 
and $-B_V+h^*B_Y+(B^h_V)^{=1}$ are both effective. 
We have already seen that $M_Y$ is an integral divisor. 
Since $\lceil -B+f^*B_Y\rceil +(B^h)^{=1}$ is effective, 
there exists a natural inclusion: 
$$
\mathcal O_Y(M_Y)\subset \mathcal N=\mathcal O_Y(M_Y)
\otimes f_*\mathcal O_X(\lceil -B+f^*B_Y\rceil +(B^h)^{=1}). 
$$ 
More precisely, we have: 
\begin{claim} The following equality 
$$
\mathcal O_Y(M_Y)=\mathcal N
$$ 
holds under the assumptions in (iv). 
\end{claim}

\begin{proof}[Proof of Claim] 
By \eqref{d-eq6.10}, 
we see that 
$$
K_{V/Y}+(B^h_V)^{=1}+(\psi)
$$ 
is effective over some nonempty Zariski open set 
of $Y$. 
Therefore, it defines a holomorphic section $\tau$ of 
$h_*\omega_{V/Y}((B^h_V)^{=1})=\widetilde f_*\omega
_{\widetilde X/Y}((B^h_{\widetilde X})^{=1})$ on some nonempty 
Zariski open set of $Y$. 
Since $\psi=d^*\widetilde \varphi$, $G=\langle \rho \rangle$ acts on $\widetilde 
\varphi$ by $\rho(\widetilde\varphi)=\zeta^{-1}\widetilde \varphi$, 
and the eigensheaf corresponding to the 
eigenvalue $\zeta^{-1}$ is $\mathcal N$, we can consider $\tau$ a rational 
section of $\mathcal N$. Let $a$ be an element of $\mathbb C(Y)$, that is, 
$a$ is a rational function on $Y$. 
Then, by the above description, a rational 
section $a\cdot \tau$ of $\mathcal N$ corresponds to 
$h^*a\cdot \psi$. 
Let $U$ be any nonempty Zariski open set of $Y$. 

Assume that $a\cdot \tau \in \Gamma (U, \mathcal N)$. Then 
\begin{equation}\label{d-eq6.12}
\left((h^*a\cdot \psi)+K_{V/Y}+(B^h_V)^{=1}\right)|_{h^{-1}(U)}\geq 0. 
\end{equation}
holds. On the other hand, we see that 
\begin{equation}\label{d-eq6.13}
(h^*a\cdot \psi)+K_{V/Y}+(B^h_V)^{=1}=
h^*((a)+M_Y)+(-B_V+h^*B_Y+(B^h_V)^{=1})
\end{equation} 
holds by \eqref{d-eq6.10}. 
Since $-B_V+h^*B_Y$ contains no fibers over any codimension 
one points of $Y$ by the assumption in (iv), 
\eqref{d-eq6.12} and \eqref{d-eq6.13} imply that 
$$
\left((a)+M_Y\right)|_{U}\geq 0.  
$$ 

Assume that 
$\left((a)+M_Y\right)|_{U}\geq 0$ holds. Then we obtain 
\begin{equation}\label{d-eq6.14}
\left((h^*a\cdot \psi)+K_{V/Y}+(B^h_V)^{=1}\right)|_{h^{-1}(U)}\geq 0  
\end{equation} 
by \eqref{d-eq6.13} because 
$-B_V+h^*B_Y+(B^h_V)^{=1}$ is effective. 
This means that $a\cdot \tau \in \Gamma (U, \mathcal N)$. 

Therefore, we obtain that the desired equality $\mathcal O_Y(M_Y)
=\mathcal N$ holds. 
\end{proof}

Hence, we obtain (iv). (v) is a direct 
consequence of (iv) and Theorem \ref{c-thm3.1}. 
\end{proof}

We close this section with a remark on the assumptions 
in (iv) and (v) in Proposition \ref{d-prop6.3}. 
We will implicitly use it in Sections \ref{c-sec7} and \ref{c-sec8}. 

\begin{rem}\label{d-rem6.4}
Let $h: (V, B_V)\to Y$ be a pre-basic slc-trivial fibration satisfying 
the assumptions in (iv) and (v) in Proposition \ref{d-prop6.3}. 
We consider the following commutative diagram of 
pre-basic slc-trivial fibrations: 
$$
\xymatrix{
(V^\dag, B_{V^\dag}) \ar[rr]^-\alpha \ar[dr]_-{h^\dag}
& &\ar[dl]^-h (V, B_V) \\ 
& Y& 
}
$$ 
where $h^\dag: (V^\dag, B_{V^\dag})\to Y$ is a pre-basic 
slc-trivial fibration, $\alpha$ is an isomorphism 
over $Y^*=Y\setminus \Sigma_Y$, and 
$K_{V^\dag}+B_{V^\dag}=\alpha^*(K_V+B_V)$. 
Then it is almost obvious that 
$h^\dag: (V^\dag, B_{V^\dag})\to Y$ also satisfies 
the assumptions in (iv) and (v) in Proposition \ref{d-prop6.3}. 
\end{rem}

\section{Covering lemmas revisited}\label{c-sec7}

In this section, we explain some covering lemmas, 
which are essentially 
due to Yujiro Kawamata (see \cite{kawamata-abel}). 
We will use Lemma \ref{c-lem7.3}, which is the main 
result of this section, in Sections \ref{c-sec8} and \ref{c-sec9}. 

\medskip 

Let us start with a well-known covering lemma in \cite{kawamata-abel}. 

\begin{lem}[{\cite[Theorem 17]{kawamata-abel}}]\label{c-lem7.1}
Let $X$ be a smooth quasi-projective variety and 
let $D$ be 
a simple normal crossing divisor on $X$ such that 
$D=\sum _{j=1}^rD_j$ is the irreducible decomposition. 
Let $N_j$ be a positive integer for $1\leq j\leq r$. 
Then we can construct a finite ramified 
cover $\tau:Z\to X$ satisfying 
the following properties. 
\begin{itemize}
\item[(i)] $Z$ is a smooth quasi-projective 
variety and there is a simple normal crossing divisor 
$\Sigma_X$ on $X$ such that 
$D\leq \Sigma_X$, 
$\tau$ is \'etale over $X\setminus \Sigma_X$, 
$\tau^{-1}(\Sigma_X)$ is a simple normal crossing divisor on $Z$. 
\item[(ii)] We have $\tau^*D_j=N_j\tau^{-1}(D_j)$ for every $1\leq j\leq r$. 
\end{itemize}
\end{lem}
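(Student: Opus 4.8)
The plan is to prove this covering lemma by a Kummer-type construction, following Kawamata's original argument in \cite[Theorem 17]{kawamata-abel}. First I would reduce to the case of a single divisor: it suffices to construct, for each $j$, a finite cover $\tau_j \colon Z_j \to X$ with $\tau_j^* D_j = N_j \tau_j^{-1}(D_j)$ and with the other $D_k$ pulling back nicely, and then take the normalization of the fiber product $Z_1 \times_X \cdots \times_X Z_r$ over $X$, cleaning up by a further blow-up and an extra cyclic cover if necessary to restore smoothness and the simple normal crossing condition on the total transform. So the core is the single-divisor case.

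For a single $D_j$, the idea is: since $X$ is quasi-projective, embed it and choose a sufficiently ample divisor $A$ so that $\mathcal{O}_X(D_j + A)$ (or rather a suitable multiple) is generated by global sections; then pick a general member $D_j' \in |D_j + A|$ so that $D_j + D_j'$ is still simple normal crossing and meets $D$ transversally, and so that $N_j(D_j + D_j') = N_j D_j + N_j D_j'$ together with some line bundle $\mathcal{L}$ with $\mathcal{L}^{N_j} \simeq \mathcal{O}_X(N_j D_j + N_j D_j')$... more precisely, I would use that $\mathcal{O}_X(D_j + D_j')$ is a line bundle admitting an $N_j$-th root after passing to a cyclic cover, or directly invoke the standard construction of the cyclic cover of $X$ branched along $N_j(D_j + D_j')$ associated to an $N_j$-divisible very ample class. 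Taking $Z_j$ to be the normalization of this cyclic cover, $\tau_j$ ramifies to order $N_j$ along the preimage of each component of the branch divisor; in particular $\tau_j^* D_j = N_j \tau_j^{-1}(D_j)$. One then has to track that $\tau_j$ is étale away from a simple normal crossing divisor $\Sigma_X \supseteq D$ (namely the union of $D$ with $D_j'$ and the other auxiliary general members), and that the preimage of $\Sigma_X$ is simple normal crossing on $Z_j$ — this uses the transversality of the general members and the smoothness of cyclic covers branched along smooth-in-codimension-one divisors, plus a further embedded resolution if the cyclic cover acquires singularities where branch components meet.

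The step I expect to be the main obstacle is arranging that, after forming the fiber product over all $j$ and taking the normalization, the resulting variety $Z$ is genuinely smooth and $\tau^{-1}(\Sigma_X)$ is simple normal crossing, rather than merely normal with quotient singularities. The standard fix is Kawamata's trick: first enlarge each $N_j$ by a common factor and add one more auxiliary cyclic cover (branched along a general very ample divisor) so that the cover becomes, locally analytically, a product of one-variable Kummer covers in coordinates adapted to $\Sigma_X$, which forces smoothness and the normal crossing property on the total transform. I would carry this out by the local analytic description: near a point of $X$ where $\Sigma_X$ is $\{x_1 \cdots x_s = 0\}$, the fiber product cover is $z_i^{m_i} = u_i x_i$ with $u_i$ units, which is smooth, and $\tau^{-1}(\Sigma_X) = \{z_1 \cdots z_s = 0\}$ is visibly simple normal crossing; the places where extra branch components from the general members intersect are handled because those components were chosen general and hence in general position with respect to $D$. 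Finally I would record that $D \leq \Sigma_X$ by construction and that property (ii) holds for every $j$ since each component of $D_j$ lies in the branch locus with ramification index exactly $N_j$.
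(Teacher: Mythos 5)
Your overall strategy (auxiliary general divisors, cyclic covers, normalized fiber product, local Kummer analysis) is the same as the paper's, which follows \cite[3.19.~Lemma]{esnault-viehweg} and \cite[Lemma 2.5]{viehweg}. However, there are two genuine problems in the way you set it up. First, the cyclic cover you describe is the wrong one: if $\mathcal L=\mathcal O_X(D_j+D_j')$ and you take the $N_j$-th root of a section of $\mathcal L^{N_j}$ whose divisor is $N_j(D_j+D_j')$, then locally the cover is $z^{N_j}=x^{N_j}u$ with $u$ a unit, and its normalization is unramified in codimension one along $D_j$; the ramification index along a branch component of multiplicity $a$ is $N_j/\gcd(N_j,a)$, so you get $1$, not $N_j$. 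The correct setup is to choose $\mathcal A$ ample with $\mathcal A^{N_j}\otimes\mathcal O_X(-D_j)$ globally generated, take $H\in |\mathcal A^{N_j}\otimes\mathcal O_X(-D_j)|$, and extract the $N_j$-th root of the section of $\mathcal A^{N_j}$ vanishing on $D_j+H$, where $D_j$ occurs with multiplicity \emph{one}. Your later local formula $z_i^{m_i}=u_ix_i$ silently assumes this multiplicity-one form, so the global construction and the local analysis do not match.

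Second, one auxiliary member $D_j'$ per $j$ is not enough: the cyclic cover branched along $D_j+D_j'$ is singular along $\tau^{-1}(D_j\cap D_j')$ (locally $z^{N_j}=xy$, a cyclic quotient singularity), and this set is nonempty once $\dim X\geq 2$. Your proposed repairs do not work: a blow-up destroys finiteness of $\tau$ over the original $X$, enlarging $N_j$ changes the required equality $\tau^*D_j=N_j\tau^{-1}(D_j)$, and one extra cyclic cover branched along a general very ample divisor does not resolve a singularity supported on $D_j\cap D_j'$. The device that actually makes the normalized fiber product smooth, and which is the point of the paper's proof, is to take $n=\dim X$ general members $H^{(j)}_1,\dots,H^{(j)}_n$ of $|\mathcal A^{N_j}\otimes\mathcal O_X(-D_j)|$ for \emph{each} $j$ and one cyclic cover $Z^{(j)}_i$ for each pair $(i,j)$. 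Then $D_j\cap H^{(j)}_1\cap\cdots\cap H^{(j)}_n=\emptyset$, so at every point of $D_j$ at least one $Z^{(j)}_i$ is locally a one-variable Kummer cover $z^{N_j}=x\cdot(\text{unit})$ branched only along $D_j$; after normalizing the full fiber product, the cover is locally a fiber product of such one-variable covers along distinct components of the simple normal crossing divisor $\Sigma_X=D+\sum_{i,j}H^{(j)}_i$, hence smooth with simple normal crossing preimage. Without this multiplicity of auxiliary divisors your argument has a real gap at the smoothness step.
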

Since it is very important to understand how to 
construct $\tau:Z\to X$, we sketch the proof of Lemma \ref{c-lem7.1} 
for the reader's convenience. 

\begin{proof}[Sketch of proof]
Here, we closely follow the presentation in 
\cite[3.19.~Lemma]{esnault-viehweg} 
and \cite[Lemma 2.5]{viehweg}. 
We take an ample line bundle $\mathcal A$ on $X$ such that 
$\mathcal A^{N_j}\otimes \mathcal O_X(-D_j)$ is generated 
by global sections for $1\leq j\leq r$. 
We put $n=\dim X$. 
We take general members $H^{(j)}_1, \ldots, 
H^{(j)}_n$ of $\left|\mathcal A^{N_j}\otimes 
\mathcal O_X(-D_j)\right|$ for $1\leq j\leq r$ such that 
$D+\sum _{i, j} H^{(j)}_i$ is a simple normal crossing 
divisor on $X$. Let $Z^{(j)}_i$ be the cyclic cover 
obtained by taking the $N_j$-th root out of 
$D_j+H^{(j)}_i$ (see \cite[3.5.~Cyclic covers]{esnault-viehweg} 
and \cite[Lemma 2.3]{viehweg}). 
More explicitly, let $s^{(j)}_i\in \Gamma (X, \mathcal A^{N_j})$ 
be a section whose zero divisor is $D_j+H^{(j)}_i$. 
The dual of $s^{(j)}_i: \mathcal O_X\to \mathcal A^{N_j}$, 
that is, $\left(s^{(j)}_i\right)^{\vee}: \mathcal A^{-N_j}\to \mathcal O_X$, 
defines an $\mathcal O_X$-algebra structure on $\bigoplus _{l=0}^{N_j-1}
\mathcal A^{-l}$. 
Then we can write $Z^{(j)}_i=\Spec _X \bigoplus_{l=0}^{N_j-1}\mathcal A^{-l}$.  
In this situation, we can check that the normalization of 
$$
\left( Z^{(1)}_1\times _X \cdots \times _X Z^{(1)}_n\right) 
\times _X\cdots \times _X \left(Z^{(r)}_1\times _X
\cdots \times _X Z^{(r)}_n\right), 
$$ 
which is denoted by $Z$, has the desired properties. 
For the details, we recommend the reader 
to see \cite[3.19.~Lemma]{esnault-viehweg} 
and \cite[Lemma 2.5]{viehweg}. 
We note that we can take $\Sigma_X=D+\sum _{i, j}H^{(j)}_i$ 
by construction. 
We will use the above 
description of $Z$ in the proof of Lemma \ref{c-lem7.3} below. 
\end{proof}

The following slight generalization of Lemma \ref{c-lem7.1} 
is very important for our applications. 

\begin{lem}[{see \cite[Corollary 19]{kawamata} and 
\cite[Remark 4.2]{ambro3}}]\label{c-lem7.2}
Let $X$, $D$, and $N_1, \ldots, N_r$ be 
as in Lemma \ref{c-lem7.1}. 
Let $\rho:X'\to X$ be a projective surjective 
morphism from a smooth quasi-projective variety $X'$ 
such that $\rho^{-1}(D)$ is a simple normal crossing divisor 
on $X'$. 
Then we may assume that 
$\tau:Z\to X$ in Lemma \ref{c-lem7.1} fits into a commutative 
diagram 
$$
\xymatrix{
Z \ar[d]_-\tau& Z'\ar[d]^-{\tau'}\ar[l] _-{\rho'}\\ 
X & X'\ar[l]^-\rho
}
$$
satisfying the following properties. 
\begin{itemize}
\item[(i)] $\tau'$ is a finite cover and 
$\rho'$ is a projective morphism. 
\item[(ii)] $Z'$ is a smooth quasi-projective 
variety. 
\item[(iii)] There is a simple normal crossing divisor 
$\Sigma_{X'}$ on $X'$ 
such that $\tau'$ is \'etale over $X'\setminus \Sigma_{X'}$, 
$(\tau')^{-1}(\Sigma_{X'})$ is a simple normal crossing divisor, 
and $\rho^{-1}(\Sigma_X)\subset \Sigma_{X'}$, 
where $\Sigma_X$ is the simple normal crossing divisor 
on $X$ in Lemma \ref{c-lem7.1}. 
\end{itemize}
\end{lem}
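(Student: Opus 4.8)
The plan is to reduce this statement to the construction already performed in the sketch of Lemma \ref{c-lem7.1} and then take an extra fibre product with $X'$. First I would apply Lemma \ref{c-lem7.1} to $X$, $D=\sum_{j=1}^r D_j$, and the integers $N_1,\dots,N_r$, but being careful about the choice of the auxiliary ample line bundle $\mathcal A$ and the general hypersurface sections $H^{(j)}_i\in\left|\mathcal A^{N_j}\otimes\mathcal O_X(-D_j)\right|$. Since $\rho\colon X'\to X$ is a fixed projective surjective morphism from a smooth quasi-projective variety with $\rho^{-1}(D)$ simple normal crossing, I would enlarge the ``general member'' requirement: choose the $H^{(j)}_i$ general enough that not only is $D+\sum_{i,j}H^{(j)}_i$ simple normal crossing on $X$, but also $\rho^{-1}\!\left(D+\sum_{i,j}H^{(j)}_i\right)$ is simple normal crossing on $X'$. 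This is possible because $\rho$ is a morphism of smooth varieties, so pulling back a sufficiently general divisor in a base-point-free linear system on $X$ preserves the simple normal crossing condition (Bertini applied on $X'$ to the pulled-back free system); one only needs $\mathcal A^{N_j}\otimes\mathcal O_X(-D_j)$ base-point-free, which was already arranged. Set $\Sigma_X=D+\sum_{i,j}H^{(j)}_i$ as in the sketch, and let $\tau\colon Z\to X$ be the resulting normalization of the iterated fibre product of the cyclic covers $Z^{(j)}_i$.

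Next I would define $Z'$ to be the normalization of the component of $Z\times_X X'$ dominating $X'$, with $\rho'\colon Z'\to Z$ and $\tau'\colon Z'\to X'$ the projections. Explicitly, since $Z$ is the normalization of a product of the $Z^{(j)}_i=\Spec_X\bigoplus_{l=0}^{N_j-1}\mathcal A^{-l}$, the fibre product $Z'$ is the normalization of the analogous iterated product over $X'$ of the cyclic covers $\left(Z^{(j)}_i\right)'$ obtained by taking the $N_j$-th root of the section $\rho^*s^{(j)}_i$, whose zero divisor is $\rho^*\!\left(D_j+H^{(j)}_i\right)$. By the choice made in the first paragraph, $\rho^*\!\left(D_j+H^{(j)}_i\right)$ is a simple normal crossing divisor on $X'$, so exactly the argument of \cite[3.19.~Lemma]{esnault-viehweg} and \cite[Lemma 2.5]{viehweg}, applied now on $X'$ with $\Sigma_{X'}:=\rho^{-1}(\Sigma_X)$ (or any simple normal crossing divisor containing it), shows that $Z'$ is smooth and quasi-projective, that $\tau'$ is finite and \'etale over $X'\setminus\Sigma_{X'}$, and that $(\tau')^{-1}(\Sigma_{X'})$ is simple normal crossing. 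This gives (ii) and (iii), and $\rho^{-1}(\Sigma_X)\subset\Sigma_{X'}$ holds by construction. Property (i) is immediate: $\tau'$ is finite as a base change of the finite morphism $\tau$ followed by normalization along the dominating component, and $\rho'$ is projective as a base change of the projective morphism $\rho$ composed with the projective normalization morphism.

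The main obstacle I anticipate is the connectedness/irreducibility bookkeeping: $Z\times_X X'$ need not be irreducible even when $Z$ and $X'$ are, so one must argue that some component dominates $X'$ and that after normalizing that component the branch data $\tau'^*D_j=N_j(\tau')^{-1}(D_j)$ and the \'etale locus are as claimed. Here I would use that $\tau$ is \'etale over the dense open set $X\setminus\Sigma_X$, hence $Z\times_X X'$ is already smooth (in particular reduced and a disjoint union of its components) over $\rho^{-1}(X\setminus\Sigma_X)$, which is dense in $X'$ since $\rho$ is surjective; picking the component through a chosen point of this open set and normalizing does not change anything over $X'\setminus\Sigma_{X'}$, so the \'etale-ness and the ramification indices $N_j$ are inherited verbatim from Lemma \ref{c-lem7.1}. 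The ramification equality $\tau'^*D_j=N_j(\tau')^{-1}(D_j)$ can be checked at the generic point of each component of $(\tau')^{-1}(D_j)$, where it follows from the corresponding statement for the cyclic cover $\left(Z^{(j)}_i\right)'$ of the simple normal crossing divisor $\rho^*(D_j+H^{(j)}_i)$, exactly as in the absolute case.
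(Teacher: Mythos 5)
Your construction of $Z'$ as the normalization of a component of $Z\times_X X'$ coincides with the intermediate object $Z^\dag$ in the paper's proof, but the assertion that the Esnault--Viehweg argument \cite[3.19.~Lemma]{esnault-viehweg}, applied on $X'$, forces $Z'$ to be smooth does not go through. The covers $(Z^{(j)}_i)'$ are cyclic covers of $X'$ branched along the divisors $\rho^*(D_j+H^{(j)}_i)$, and these \emph{pullback} divisors (as opposed to their reduced supports $\rho^{-1}(D_j+H^{(j)}_i)$) are in general not reduced once $\rho$ is birational or ramified. For a cyclic cover of a non-reduced branch divisor, the ramification indices over the components of the support are $N_j/\gcd(N_j,m_k)$, where $m_k$ is the multiplicity of $\rho^*D_j$ along the $k$-th component, and these unequal indices can obstruct smoothness after normalization: for example, normalizing $z^2=u^3v^3$ yields $w^2=uv$ (with $w=z/(uv)$), an ordinary double point. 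Arranging $\rho^{-1}(\Sigma_X)$ to be simple normal crossing does nothing to remove these multiplicities, so $Z^\dag$ need not be smooth and \cite[3.19.~Lemma]{esnault-viehweg} does not apply directly.

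The paper closes exactly this gap by a second application of Lemma~\ref{c-lem7.1}: after forming $Z^\dag$, it records the ramification index $e(\Delta^l_k)$ of each component $\Delta^l_k$ of $(\tau^\dag)^{-1}(D'_k)$ over $D'_k$ (where $D'=\rho^{-1}(\Sigma_X)=\sum_k D'_k$), sets $N'_k:=\mathrm{lcm}_l\,e(\Delta^l_k)$, and builds a fresh Kawamata cover $\widetilde\tau:\widetilde Z\to X'$ for the data $(X',D',N'_k)$. The desired $Z'$ is then the normalization of a component of $\widetilde Z\times_{X'}Z^\dag$; one checks that $Z'\to\widetilde Z$ is unramified in codimension one (the ramification indices multiply to $N'_k$ on both sides) and hence \'etale by purity of the branch locus over the \emph{smooth} $\widetilde Z$, so $Z'$ is smooth. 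This auxiliary cover $\widetilde Z$ and the \'etale-over-$\widetilde Z$ argument are the essential content your proposal is missing; without them the smoothness of $Z'$ is simply false in general.
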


Although the proof of Lemma \ref{c-lem7.2} is 
more or less well known to the experts, we give a detailed proof 
for the reader's convenience. 

\begin{proof}[Proof of Lemma \ref{c-lem7.2}] 
We closely follow the presentation in \cite[Corollary 2.6]{viehweg}. 
In the proof of Lemma \ref{c-lem7.1}, 
we can choose the divisors $H^{(j)}_i$ on $X$ such that 
$D':=\rho^{-1}\left(D+\sum _{i, j}H^{(j)}_i\right)$ is a simple normal crossing 
divisor on $X'$. Let $Z^\dag$ be the normalization 
of an irreducible component of $Z\times _XX'$. 
Then we get the following commutative diagram: 
$$
\xymatrix{
Z \ar[d]_-\tau& Z^\dag\ar[d]^-{\tau^\dag}\ar[l]_-{\rho^\dag}\\
X & X'\ar[l]^-\rho. 
}
$$
By construction, $\tau^\dag$ is \'etale over $X'\setminus D'$. 
Let $D'=\sum _k D'_k$ be the irreducible decomposition. 
We put 
$$
N'_k:= \underset{l}{\mathrm{lcm}}\{e(\Delta_k^l) \, |\, {\text{$\Delta_k^l$ is an 
irreducible component 
of $(\tau^\dag)^{-1}(D'_k)$}}\}, 
$$
where $e(\Delta_k^l)$ denotes the ramification 
index of $\Delta_k^l$ over $D'_k$. 
Let $\widetilde \tau: \widetilde Z\to X'$ be the finite cover constructed in Lemma 
\ref{c-lem7.1} for $X'$, $D'$, and $N'_k$. 
Let $Z'$ be the normalization of an irreducible component of $\widetilde Z\times _{X'}
Z^\dag$. 
Thus we get the following commutative diagram: 
$$
\xymatrix{
Z \ar[d]_-\tau& Z^\dag \ar[d]_-{\tau^\dag}\ar[l]& Z' 
\ar[dl]_-{\tau'}\ar[d]^-\alpha\ar[l]_-\beta\\
X & \ar[l]^-\rho X' & \widetilde Z\ar[l]^-{\widetilde \tau}. 
}
$$
Since $\widetilde \tau: \widetilde Z\to X'$ is constructed as a chain of 
finite cyclic covers, 
the same holds true for $\beta:Z'\to Z^\dag$. 
The ramification index of a component 
of $\beta^{-1}(\Delta^l_k)$ over $\Delta^l_k$ is 
$N'_k/ e(\Delta^l_k)$ by construction. 
Therefore, the ramification index of an irreducible component of 
$(\tau')^{-1}(D'_k)$ 
over $D'_k$ is given by 
$$
\frac{N'_k}{e(\Delta^l_k)}\cdot e(\Delta^l_k)=N'_k. 
$$ 
By the construction of $\widetilde Z$, this is nothing but 
the ramification index of an irreducible component of 
$(\widetilde \tau)^{-1}(D'_k)$ 
over $D'_k$. Therefore, $\alpha: Z'\to \widetilde Z$ is unramified 
in codimension one. 
Since $\widetilde Z$ is smooth, $\alpha$ is \'etale. 
Thus, we can easily check 
that $\tau': Z'\to X'$ satisfies the desired properties.   
\end{proof}

The following lemma is the main result of this section. 
This somewhat technical covering lemma will play an important role 
in Sections \ref{c-sec8} and \ref{c-sec9}. 
 
\begin{lem}[Unipotent reduction for pre-basic slc-trivial 
fibrations]\label{c-lem7.3}
Let $h:(V, B_V)\to Y$ be a pre-basic slc-trivial fibration such that 
$Y$ is a smooth quasi-projective variety. 
Assume that there are simple normal crossing divisors $\Sigma_V$ and 
$\Sigma_Y$ on $V$ and $Y$ respectively such that 
$h^{-1}(\Sigma_Y)\subset \Sigma_V$, 
$h(\Sigma^v_V)\subset \Sigma_Y$, 
$\Supp B_V\subset 
\Sigma_V$, and every stratum of $(V, \Sigma^h_V)$ is smooth 
over $Y\setminus \Sigma_Y$.  
Then there exist a finite cover $\gamma:Y'\to Y$ from a 
smooth quasi-projective variety $Y'$ such that 
$\Sigma_{Y'}:=\gamma^{-1}(\Sigma_Y)$ is a simple normal crossing 
divisor on $Y'$ and a commutative diagram 
$$
\xymatrix{
V\ar[d]_-h & V\times _Y Y' \ar[d]\ar[l]_-q& V'\ar[l]_-p\ar[dl]^-{h'} \\ 
Y & Y' \ar[l]^-\gamma&
}
$$
with the following properties. 
\begin{itemize}
\item[(i)] $p$ is a projective birational morphism from 
a simple normal crossing variety $V'$ 
which is an isomorphism over $Y'\setminus \Sigma_{Y'}$. 
\item[(ii)] $h': (V', B_{V'})\to Y'$ is a pre-basic slc-trivial 
fibration, where $\gamma':=q\circ p: V'\to V$ and 
$K_{V'}+B_{V'}=(\gamma')^*(K_V+B_V)$. 
\item[(iii)] There exists a simple normal crossing 
divisor $\Sigma_{V'}$ on $V'$ such that 
$(\gamma')^{-1}(\Sigma_{V})\subset \Sigma_{V'}$, 
$\Supp B_{V'}\subset \Sigma_{V'}$, 
$h'(\Sigma^v_{V'})\subset \Sigma_{Y'}$, 
$(h')^{-1}(\Sigma_{Y'})\subset \Sigma_{V'}$, 
and every stratum of $(V', \Sigma^h_{V'})$ is smooth 
over $Y'\setminus \Sigma_{Y'}$. 
\item[(iv)] $h': (V', B_{V'})\to Y'$ 
satisfies the assumptions in {\em{(iv)}} and {\em{(v)}} 
in Proposition \ref{d-prop6.3}. 
\end{itemize}
\end{lem}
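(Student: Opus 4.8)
The plan is to produce $\gamma$ by Kawamata's covering construction, choosing the ramification order along each component of $\Sigma_Y$ large and divisible enough that, after base change, two things happen simultaneously: the local monodromies become unipotent, and the discriminant coefficient is realised on a \emph{reduced} fibre component. First I would record the two numerical inputs. By \cite[Theorem 4.15]{fujino-fujisawa} the local system $R^{d}(h|_{V^{*}})_{*}\iota_{!}\mathbb{Q}_{V^{*}\setminus(B^{h}_{V^{*}})^{=1}}$, $d=\dim V-\dim Y$, underlies an admissible graded polarizable variation of $\mathbb{Q}$-mixed Hodge structure on $Y^{*}=Y\setminus\Sigma_{Y}$; hence its local monodromies around the components of $\Sigma_{Y}$ are quasi-unipotent, and there is $m\in\mathbb{Z}_{>0}$ with all $m$-th powers of these monodromies unipotent. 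Since $h^{-1}(\Sigma_{Y})\subset\Sigma_{V}$, for each component $P$ of $\Sigma_{Y}$ we may write $h^{*}P=\sum_{j}w_{P,j}Q_{P,j}$ over the generic point of $P$; let $N\in\mathbb{Z}_{>0}$ be divisible by $m$ and by every $w_{P,j}$. Applying Lemma \ref{c-lem7.1} to $Y$ and $\Sigma_{Y}$ with all ramification orders equal to $N$ gives a finite cover $\gamma:Y'\to Y$ from a smooth quasi-projective $Y'$, a simple normal crossing divisor $\Sigma_{Y}\le\Sigma_{X}$ on $Y$ with $\gamma$ \'etale over $Y\setminus\Sigma_{X}$, $\Sigma_{Y'}:=\gamma^{-1}(\Sigma_{X})$ simple normal crossing, and $\gamma^{*}P=N\,\gamma^{-1}(P)$ for every component $P$ of $\Sigma_{Y}$. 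After enlarging $\Sigma_{Y}$ to $\Sigma_{X}$ and correspondingly blowing up $V$ (the extra components of $\Sigma_{X}$ are general members of base-point-free systems, so their generic points lie in $Y^{*}$, whence the local system extends across them and they pull back to reduced divisors on $V$; no monodromy or reducedness condition is thereby violated, and the resulting statement implies the original), we may assume $\Sigma_{X}=\Sigma_{Y}$, i.e.\ $\gamma$ is \'etale over $Y\setminus\Sigma_{Y}$.

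Next I would form $V\times_{Y}Y'$ and, by \cite[Theorem 1.4]{bierstone-vera}, choose a projective birational morphism $p:V'\to V\times_{Y}Y'$ from a simple normal crossing variety $V'$, an isomorphism over $Y'\setminus\Sigma_{Y'}$, every stratum of which dominates $Y'$; put $\gamma'=q\circ p$ and $K_{V'}+B_{V'}=(\gamma')^{*}(K_{V}+B_{V})$. Then $h':(V',B_{V'})\to Y'$ is a pre-basic slc-trivial fibration, which gives (i) and (ii): $h'_{*}\mathcal{O}_{V'}\simeq\mathcal{O}_{Y'}$ follows from a Stein factorization and Zariski's main theorem exactly as in the proof of Lemma \ref{d-lem4.12} (general fibres of $h'$ coincide with those of $h$, hence are connected, and every component of $V'$ dominates $Y'$), $B_{V'}=B_{V'}^{\le 1}$ over the generic point of $Y'$ because $\gamma'$ is \'etale there, and $K_{V'}+B_{V'}\sim_{\mathbb{Q}}(h')^{*}\gamma^{*}D$ with $\gamma^{*}D$ $\mathbb{Q}$-Cartier. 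For (iii) I would blow up $V'$ further, isomorphically over $Y'\setminus\Sigma_{Y'}$ and preserving dominance of strata, to obtain a simple normal crossing divisor $\Sigma_{V'}$ containing $(\gamma')^{-1}(\Sigma_{V})$, $\Supp B_{V'}$ and $(h')^{-1}(\Sigma_{Y'})$, with $h'(\Sigma^{v}_{V'})\subset\Sigma_{Y'}$ and every stratum of $(V',\Sigma^{h}_{V'})$ smooth over $Y'\setminus\Sigma_{Y'}$; over $Y'\setminus\Sigma_{Y'}$ this last property is inherited from $h$ via Lemma \ref{d-lem2.16} and the \'etaleness of $\gamma$, so only the locus over $\Sigma_{Y'}$ needs correction, and all newly introduced vertical divisors lie over $\Sigma_{Y'}$.

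Finally, for (iv): the unipotency hypothesis of Proposition \ref{d-prop6.3}(v) holds because the monodromy of $h'$ around a component of $\gamma^{-1}(P)$ is the $N$-th power of the corresponding monodromy of $h$ around $P$, hence unipotent by the choice of $N$, while the relevant local system for $h'$ on $Y'\setminus\Sigma_{Y'}$ is the $\gamma$-pullback of that for $h$. For the hypothesis of Proposition \ref{d-prop6.3}(iv), a local normalization computation (the plane-curve model $s^{N}=u^{w}$, whose branches, as $w\mid N$, are parametrized by $u=\sigma^{N/w}$ and $s=\zeta\sigma$ for a root of unity $\zeta$) shows that over the generic point of each component of $\gamma^{-1}(P)$ the normalization of $V\times_{Y}Y'$ is already smooth and $(h')^{*}\gamma^{-1}(P)$ is reduced there; since the modifications above are isomorphisms at those points, the same holds on $V'$. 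By Lemma \ref{d-lem4.10}, $\gamma^{*}(K_{Y}+B_{Y})=K_{Y'}+B_{Y'}$, and by Remark \ref{c-rem4.7}, applicable over the generic point of $\gamma^{-1}(P)$ because $\Supp B_{V'}+\Supp(h')^{*}\gamma^{-1}(P)\subset\Sigma_{V'}$ is simple normal crossing and $(h')^{*}\gamma^{-1}(P)$ is reduced, we get $\mult_{\gamma^{-1}(P)}B_{Y'}=\max_{j}\mult_{Q_{j}}B_{V'}$, where $Q_{1},\dots,Q_{k}$ are the components of $(h')^{*}\gamma^{-1}(P)$; a component $Q=Q_{j}$ attaining this maximum satisfies $h'(Q)=\gamma^{-1}(P)$, $\mult_{Q}(h')^{*}\gamma^{-1}(P)=1$ and $\mult_{Q}(-B_{V'}+(h')^{*}B_{Y'})=0$, which is precisely the hypothesis of Proposition \ref{d-prop6.3}(iv). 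The main obstacle is the bookkeeping concentrated in this last step: all the auxiliary modifications of $V'$ (for global simple normal crossing, for dominance of strata, for $\Sigma_{V'}$) must be chosen so as not to destroy the reduced fibre components produced by the base change, nor to alter the coefficients $\mult_{Q_{j}}B_{V'}$ along them, which is possible only because each such modification can be arranged to be an isomorphism over a neighbourhood of the generic point of every component of $\gamma^{-1}(P)$.
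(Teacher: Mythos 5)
Your proof is essentially correct and follows the same strategy as the paper: choose ramification orders along $\Sigma_Y$ divisible by both the orders needed to kill the quasi-unipotent monodromy and the fibre multiplicities $w_j^l$, apply Kawamata's covering lemma (Lemma \ref{c-lem7.1}), form and normalize the base change, and resolve via \cite[Theorem 1.4]{bierstone-vera}. Your local plane-curve model $s^N=u^w$ with $w\mid N$ is exactly the content of the paper's appeal to the explicit partial normalization $\Spec_V\bigoplus_i(h^*\mathcal A)^{-i}\otimes\mathcal O_V(\lfloor ih^*P_j/N_j\rfloor)$ together with \cite[Proposition 7.23]{kollar-mori}, just presented more concretely; and your verification of the hypothesis of Proposition \ref{d-prop6.3}(iv) via Lemma \ref{d-lem4.10} and Remark \ref{c-rem4.7} is actually more explicit than what the paper records. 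Two cosmetic deviations: you use a single $N$ divisible by everything rather than per-component $N_j$ (harmless), and your ``reduction to $\Sigma_X=\Sigma_Y$'' by enlarging the boundary to absorb the auxiliary general hyperplanes $H^{(j)}_i$ from Lemma \ref{c-lem7.1} is an unnecessary detour --- since $\Sigma_{Y'}$ is by definition only $\gamma^{-1}(\Sigma_Y)$, nothing in (i)--(iv) forces you to treat $\gamma^{-1}(H^{(j)}_i)$ at all, and the base change over those general $H$'s is automatically nice (by Lemma \ref{d-lem2.16}) without any bookkeeping.
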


\begin{proof}
Let $\Sigma_Y=\sum _{j=1}^{r} P_j$ and $\Sigma_V=\sum _lQ_l$ be 
the irreducible decomposition of $\Sigma_Y$ 
and $\Sigma_V$ respectively. 
In this case, we can write $h^*P_j=\sum _l w^l_j Q_l$ 
with $w^l_j\in \mathbb Z_{\geq 0}$ for every $j$. 
Let $M_j$ be the monodromy matrix on the local 
system 
$$R^{\dim V-\dim Y}(h|_{V^*})_*\iota_!\mathbb Q_{V^*
\setminus (B^h_{V^*})^{=1}}$$ around $P_j$, 
where $Y^*:=Y\setminus \Sigma_Y$, 
$V^*:=h^{-1}(Y^*)$, $B_{V^*}:=B_V|_{V^*}$, 
and $\iota: V^*\setminus (B^h_{V^*})^{=1}\hookrightarrow V^*$. 
By \cite[Theorem 4.15]{fujino-fujisawa}, 
the local system $R^{\dim V-\dim Y}(h|_{V^*})_*\iota_!\mathbb Q_{V^*
\setminus (B^h_{V^*})^{=1}}$ underlies 
a graded polarizable variation of 
$\mathbb Q$-mixed Hodge structure on $Y^*$. 
In particular, $M_j$ is quasi-unipotent. 
We put 
$$
m_j:=\min\{ m\in \mathbb Z_{>0}\, |\, 
{\text{$M_j^m$ is unipotent}}\}
$$ 
and 
$$ 
w_j:=\underset{l}{\mathrm{lcm}}\{w_j^l\, |\, h(Q_l)=P_j\}
$$
Then we set 
$$
N_j:=\mathrm{lcm}\{m_j, w_j\}. 
$$ 
By applying Lemma \ref{c-lem7.1} to $Y$, $\Sigma_Y$, and 
$N_j$, we can construct a finite cover $\gamma: Y'\to Y$. 
More precisely, let $\mathcal A$ be an ample line bundle on $Y$ such 
that $\mathcal A^{N_j}\otimes \mathcal O_Y(-P_j)$ is generated 
by global sections for $1\leq j\leq r$. 
We put $n=\dim Y$. 
We take general members $H^{(j)}_1, \ldots, H^{(j)}_n$ of 
$|\mathcal A^{N_j}\otimes \mathcal O_Y(-P_j)|$ for $1\leq j\leq r$ such that 
$\Sigma_Y+\sum _{i, j}H^{(j)}_i$ is a simple normal crossing divisor on $Y$. 
By the above data $\mathcal A$, $N_j$, and $\Sigma_Y+\sum _{i, j}H^{(j)}_i$, 
we can construct a finite cover $\gamma: Y'\to Y$, which 
is a chain of cyclic covers (see the proof of 
Lemma \ref{c-lem7.1}). 
Let $s\in \Gamma(Y, \mathcal A^{N_j})$ be a section whose 
zero divisor is $P_j+H^{(j)}_1$. 
The dual of $s: \mathcal O_Y\to \mathcal A^{N_j}$, that is, 
$s^{\vee}: \mathcal A^{-N_j}\to \mathcal O_Y$, 
defines an $\mathcal O_Y$-algebra 
structure on $\bigoplus _{i=0}^{N_j-1}\mathcal A^{-i}$. 
From now on, we will look at $\gamma:Y'\to Y$ in a neighborhood of 
the generic point of $P_j$. 
Therefore, by shrinking $Y$ suitably, 
we assume that $(s=0)=P_j$. 
By construction, we can easily see that 
$\gamma: Y'\to Y$ can be decomposed as follows: 
$$
\gamma: Y'\overset{\beta}\longrightarrow \widetilde Y \overset{\alpha}
\longrightarrow Y, 
$$ 
where $\widetilde Y=\Spec _Y \bigoplus _{i=0}^{N_j-1}\mathcal A^{-i}$, 
$\alpha:\widetilde Y\to Y$ is the cyclic cover 
obtained by taking the $N_j$-th root out of $P_j$, 
and $\beta:Y'\to \widetilde Y$ is a finite \'etale morphism. 
Let us consider $V\times _Y\widetilde Y=
\Spec _V \bigoplus _{i=0}^{N_j-1}(h^*\mathcal A)^{-i}$. 
Note that the $\mathcal O_V$-algebra structure on 
$\bigoplus _{i=0}^{N_j-1}(h^*\mathcal A)^{-i}$ is defined by 
the dual of $h^*s\in \Gamma(V, (h^*\mathcal A)^{N_j})$, that 
is, $(h^*s)^{\vee}: (h^*\mathcal A)^{-N_j}\to \mathcal O_V$. 
We put 
$$\widetilde Z:=\Spec _V \bigoplus _{i=0}^{N_j-1}(h^*\mathcal A)^{-i}
\otimes \mathcal O_V\left(\left\lfloor \frac{ih^*P_j}{N_j}\right\rfloor\right).$$
Of course, the $\mathcal O_V$-algebra structure on 
$$\bigoplus _{i=0}^{N_j-1}(h^*\mathcal A)^{-i}
\otimes \mathcal O_V\left(\left\lfloor \frac{ih^*P_j}{N_j}\right\rfloor\right)$$ 
is defined by the isomorphism 
$$
(h^*\mathcal A)^{-N_j}\otimes \mathcal O_V\left( \left\lfloor 
\frac{N_jh^*P_j}{N_j}\right\rfloor\right)\overset{\sim}
\longrightarrow\mathcal O_V, 
$$  
which is induced by $h^*s\in \Gamma (V, (h^*\mathcal A)^{N_j})$. 
Then we get a morphism $\widetilde Z\to V\times _Y\widetilde Y$, 
which is an isomorphism over $\widetilde Y\setminus 
\alpha^{-1}(P_j)$,  
induced by the natural map of $\mathcal O_V$-algebras 
$$
\bigoplus _{i=0}^{N_j-1}(h^*\mathcal A)^{-i}\to 
\bigoplus _{i=0}^{N_j-1}(h^*\mathcal A)^{-i}
\otimes \mathcal O_V\left(\left\lfloor \frac{ih^*P_j}{N_j}\right\rfloor\right). 
$$
We put $Z':=\widetilde Z\times _{\widetilde Y}Y'$ and 
take a suitable birational modification $a:V'\to Z'$. 
Then we get the following big commutative diagram: 
\begin{equation}\label{d-eq7.1}
\xymatrix{
& \widetilde Z\ar[dl]_-b \ar[d]& Z'\ar[l]_-{\beta_2} \ar[d]&V'\ar[dl]_-p\ar[l]_-a\ar[ddl]^-{h'} \\ 
V\ar[d]_-h& V\times _Y \widetilde Y\ar[d]\ar[l] & V\times _Y Y'\ar[d]
\ar[l]^-{\beta_1}& \\ 
Y & \widetilde Y \ar[l]^-\alpha&Y'\ar[l]^-\beta &
}
\end{equation}
where $\beta, \beta_1$, and $\beta_2$ are finite \'etale morphisms. 
We put $\widetilde P_j=\alpha^{-1}(P_j)$. 
We define $B_{\widetilde Z}$ by $K_{\widetilde Z}+B_{\widetilde Z}
=b^*(K_V+B_V)$. 
Similarly, we 
put $\beta_2^*(K_{\widetilde Z}+B_{\widetilde Z})=K_{Z'}+B_{Z'}$ 
and $a^*(K_{Z'}+B_{Z'})=K_{V'}+B_{V'}$. 
Without loss of generality, by shrinking $Y$ suitably, 
we may assume that 
$h(Q_l)=P_j$ holds if $h(Q_l)\subset P_j$. 
By the construction of $\alpha:\widetilde Y\to Y$ and the definition 
of $N_j$, $c^* \widetilde P_j$ is reduced 
(see \cite[Proposition 7.23]{kollar-mori}) and $(\widetilde Z, c^*\widetilde P_j)$ 
is semi-log canonical, where $c:\widetilde Z\to \widetilde Y$. 
We note that $Z'\to Y'$ in \eqref{d-eq7.1} 
is the base change of $c:\widetilde Z\to \widetilde Y$ by an 
\'etale morphism $\beta:Y'\to \widetilde Y$. 
Therefore, we can take a birational modification $a: V'\to Z'$ which 
is an isomorphism over $Y'\setminus \gamma^{-1}(P_j)$ such that 
$h': (V', B_{V'})\to Y'$ is a pre-basic slc-trivial fibration satisfying 
the desired properties. Although we constructed $h': (V', B_{V'})\to 
Y'$ after shrinking $Y$ around the generic point of $P_j$, 
we can construct a desired pre-basic slc-trivial 
fibration $h': (V', B_{V'})\to Y'$ globally 
without shrinking $Y$ by the above local description and 
\cite[Theorem 1.4]{bierstone-vera}. 
\end{proof}

\section{Pull-back of the moduli parts}\label{c-sec8}

In this section, we see that 
the moduli parts behave well under pull-back by 
generically finite surjective 
morphisms with some mild assumptions.  

\medskip
 
Let $$K_X+B+\frac{1}{b}(\varphi)=f^*(K_Y+B_Y+M_Y)$$ and 
$h:(V, B_V)\overset{g}\longrightarrow 
(X, B)\overset{f}\longrightarrow Y$ be as in Section \ref{c-sec6} 
which satisfies conditions (a)--(g) in Section \ref{c-sec6}.  
Let $\gamma:Y'\to Y$ be a generically finite surjective 
morphism 
from a smooth quasi-projective 
variety $Y'$. 
Assume that there is a simple normal crossing divisor 
$\Sigma_{Y'}$ which contains 
$\gamma^{-1}(\Sigma_Y)$. 
By base change, 
we have a commutative diagram: 
\begin{equation}\label{c-eq8.1}
\xymatrix{& V\ar[dd]\ar[dl]_-g &  &V' \ar[dl]_-{g'} 
\ar[ll]_-\nu \ar[dd]\\ 
X\ar[dr]_-f& & X' \ar[ll]|-\hole _(.35)\sigma\ar[dr]_-{f'}&\\
& Y && Y'\ar[ll]^-\gamma
}
\end{equation}
where $h':(V', B_{V'})\overset{g'}\longrightarrow 
(X', B_{X'})\overset{f'}\longrightarrow Y'$ satisfies 
the same properties, that is, (a)--(g) in Section \ref{c-sec6}, 
and it is nothing but the base change 
of $h:(V, B_V)\overset{g}\longrightarrow 
(X, B)\overset{f}\longrightarrow Y$ by 
$\gamma: Y'\to Y$ over $Y\setminus \Sigma_Y$. 
We note that 
$B_{X'}$ and $B_{V'}$ are induced by crepant 
pull-back, that is, 
$K_{X'}+B_{X'}=\sigma^*(K_X+B_X)$ and $K_{V'}+B_{V'}=
\nu^*(K_V+B_V)$, 
$\Sigma_{X'}\supset \sigma^{-1}(\Sigma_X)$, 
$\Sigma_{V'}\supset \nu^{-1}(\Sigma_V)$, and 
$\varphi'=\sigma^*\varphi$. 
In this situation, we say that the setup 
$h':(V', B_{V'})\overset{g'}\longrightarrow 
(X', B_{X'})\overset{f'}\longrightarrow Y'$ is induced from 
$h:(V, B_V)\overset{g}\longrightarrow (X, B_X)\overset{f}
\longrightarrow Y$ by the base change 
$\gamma:Y'\to Y$. 
\medskip

In the above setup, we have the following theorem, 
which is a generalization of 
\cite[Proposition 5.5]{ambro3}. 
Note that \cite[Proposition 5.5]{ambro3} is a generalization of 
\cite[Proposition 4.2]{fujino-certain}. 

\begin{thm}\label{c-thm8.1}
Let $h:(V, B_V)\overset{g}\longrightarrow 
(X, B)\overset{f}\longrightarrow Y$ be a setup 
as in Section \ref{c-sec6} 
which satisfies conditions {\em{(a)}}--{\em{(g)}} in Section \ref{c-sec6}. 
Let $\gamma:Y'\to Y$ be a generically finite 
projective surjective 
morphism from a smooth quasi-projective 
variety $Y'$. 
Assume that 
there exists a simple normal crossing divisor $\Sigma_{Y'}$ on $Y'$ which 
contains $\gamma^{-1}(\Sigma_Y)$. 
We consider an induced setup 
$h': (V', B_{V'})\overset{g'}\longrightarrow 
(X', B_{X'})\overset{f'}\longrightarrow Y'$ as in 
\eqref{c-eq8.1}. 
Let $M_{Y'}$ be the moduli part of the induced setup 
$h': (V', B_{V'})\overset{g'}\longrightarrow 
(X', B_{X'})\overset{f'}\longrightarrow Y'$. 
Then we obtain $\gamma ^*M_Y=M_{Y'}$. 
\end{thm}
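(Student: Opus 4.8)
The plan is to reduce the assertion to the Hodge-theoretic description of the moduli part given in Proposition \ref{d-prop6.3} and then to feed it into the base change statement Theorem \ref{c-thm3.1} (iii). Over $Y^*=Y\setminus \Sigma_Y$ the morphism $\gamma$ is defined and the two sides of $\gamma^*M_Y=M_{Y'}$ obviously agree, so only the behaviour along $\Sigma_{Y'}$ has to be controlled. Pulling back by a further finite surjective morphism is harmless: by Lemma \ref{d-lem4.10} it turns $M_{Y'}$ into $M_{Y''}$ and $M_Y$ into $M_{\widehat Y}$, and the moduli part of an induced setup is unambiguous by Proposition \ref{d-prop6.3} (iii) together with Lemma \ref{c-lem4.6}. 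Hence, applying Lemma \ref{c-lem7.3} to $h:(V,B_V)\to Y$, I may assume after such a finite base change that $h:(V,B_V)\to Y$ satisfies the hypotheses (iv) and (v) of Proposition \ref{d-prop6.3}; in particular the local system $R^{\dim V-\dim Y}(h|_{V^*})_*\iota_!\mathbb Q$ has unipotent monodromies around $\Sigma_Y$, and since unipotence is preserved under the base change $\gamma$ and under birational modification, the same holds around $\Sigma_{Y'}$. Here Remark \ref{d-rem6.4} and Lemma \ref{c-lem7.3} (iii)--(iv) ensure that after this reduction one still has simple normal crossing divisors $\Sigma_{V'},\Sigma_{Y'}$ with $\gamma^{-1}(\Sigma_Y)\subset\Sigma_{Y'}$, every stratum of $(V',\Sigma^h_{V'})$ smooth over $Y'\setminus\Sigma_{Y'}$, and hypotheses (iv), (v) of Proposition \ref{d-prop6.3} in force for $h':(V',B_{V'})\to Y'$ as well.

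With unipotent monodromy, Proposition \ref{d-prop6.3} (iv) tells us that $M_Y$ is an integral divisor and that $\mathcal O_Y(M_Y)$ is the eigensheaf $\mathcal N$ for the eigenvalue $\zeta^{-1}$ of the $G=\mathbb Z/b\mathbb Z$-action inside the locally free sheaf $h_*\omega_{V/Y}((B^h_V)^{=1})$; likewise $\mathcal O_{Y'}(M_{Y'})$ is the corresponding eigensheaf $\mathcal N'$ inside $h'_*\omega_{V'/Y'}((B^h_{V'})^{=1})$. Theorem \ref{c-thm3.1} (iii), applied to the simple normal crossing pair $(V,(B^h_V)^{=1})$ over $Y$ and the morphism $\gamma:Y'\to Y$ (with $\gamma^{-1}(\Sigma_Y)\subset\Sigma_{Y'}$ a simple normal crossing divisor), yields a natural isomorphism $\gamma^*\bigl(h_*\omega_{V/Y}((B^h_V)^{=1})\bigr)\xrightarrow{\ \sim\ }h'_*\omega_{V'/Y'}((B^h_{V'})^{=1})$ extending the tautological one over $Y^*$. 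Since $\varphi'=\sigma^*\varphi$, the section $\psi$ of the cyclic cover pulls back to the corresponding section on $V'$, so the two $G$-actions are compatible with this isomorphism; hence it respects the eigensheaf decompositions and restricts to $\gamma^*\mathcal N\xrightarrow{\ \sim\ }\mathcal N'$. Therefore $\gamma^*\mathcal O_Y(M_Y)\cong\mathcal O_{Y'}(M_{Y'})$, that is, $\gamma^*M_Y\sim M_{Y'}$ on $Y'$.

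To upgrade this linear equivalence to an equality of divisors I would compare the distinguished sections directly. Repeating the computation in the proof of the Claim in Proposition \ref{d-prop6.3} on the $Y'$-side, the pulled-back section on $V'$ gives $(\psi')+K_{V'/Y'}+(B^h_{V'})^{=1}=(h')^*M_{Y'}+\Xi'$ with $\Xi'$ effective and containing no fibre over a codimension-one point of $Y'$; pulling back the analogous identity from $V$ and comparing shows first that $M_{Y'}-\gamma^*M_Y$ is supported on $\gamma^{-1}(\Sigma_Y)$. Along a component of $\gamma^{-1}(\Sigma_Y)$ that dominates a prime divisor of $Y$, the coefficients of $M_{Y'}$ and of $\gamma^*M_Y$ coincide exactly as in the proof of Lemma \ref{d-lem4.10}, since $\gamma$ is finite near the generic point of such a divisor. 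Along a $\gamma$-exceptional component, $M_{Y'}-\gamma^*M_Y$ is $\gamma$-exceptional (using $\gamma_*M_{Y'}=M_Y$ from \ref{c-say4.5}) and $\mathbb Q$-linearly trivial by the previous paragraph, so the negativity lemma forces it to vanish. Thus $\gamma^*M_Y=M_{Y'}$. (When $\gamma$ is finite this last step is not needed at all: $M_{Y'}=\gamma^*D-K_{Y'}-B_{Y'}=\gamma^*D-\gamma^*(K_Y+B_Y)=\gamma^*M_Y$ directly by Lemma \ref{d-lem4.10}.)

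The Hodge-theoretic core --- the eigensheaf identification of Proposition \ref{d-prop6.3} (iv) together with Theorem \ref{c-thm3.1} (iii) --- is thus a direct application; I expect the genuine obstacle to lie in the bookkeeping of the reduction step. One must carefully arrange the finite cover of Lemma \ref{c-lem7.3}, the base change by $\gamma$, and the attendant simple normal crossing and smoothness conditions so that Theorem \ref{c-thm3.1} (iii) literally applies, and then descend the resulting equality through the auxiliary finite covers via Lemmas \ref{d-lem4.10} and \ref{c-lem4.6}. The $G$-equivariance of the base change isomorphism, although routine, is the point requiring the most care, since it is precisely what transports the statement from the Hodge bundles to the specific direct summand $\mathcal N$ cut out by $M_Y$.
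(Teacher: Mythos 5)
Your Hodge-theoretic core --- identifying $\mathcal O_Y(M_Y)$ and $\mathcal O_{Y'}(M_{Y'})$ with the $\zeta^{-1}$-eigensheaves via Proposition \ref{d-prop6.3} (iv), transporting one to the other by the $G$-equivariant base change isomorphism of Theorem \ref{c-thm3.1} (iii), and then upgrading the linear equivalence to an equality because the difference vanishes wherever $\gamma$ is finite (Lemma \ref{d-lem4.10}) --- is exactly Step 1 of the paper's proof. The gap is in your reduction. You apply Lemma \ref{c-lem7.3} only once, to $h:(V,B_V)\to Y$, and then assert that hypotheses (iv) and (v) of Proposition \ref{d-prop6.3} are ``in force for $h':(V',B_{V'})\to Y'$ as well'' because unipotence is preserved under base change. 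But hypothesis (iv) is not a monodromy condition: it requires, for \emph{every} irreducible component $P'$ of $\Sigma_{Y'}$, a prime divisor $Q'$ on $V'$ with $\mult_{Q'}(h')^*P'=1$ and $\mult_{Q'}(-B_{V'}+(h')^*B_{Y'})=0$. For a $\gamma$-exceptional component $P'$ of $\Sigma_{Y'}$ nothing of the sort is inherited from the $Y$-side: the multiplicities of $(h')^*P'$ are controlled by those of $\gamma^*P_j$ along $P'$, and all components of $(h')^*P'$ computing the relevant log canonical threshold can perfectly well be non-reduced. Remark \ref{d-rem6.4} does not help --- it only concerns modifications of $V$ over a fixed base --- and Lemma \ref{c-lem7.3} (iii)--(iv) describe the specific cover constructed in that lemma, not an arbitrary subsequent generically finite base change. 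Without (iv) for $h'$ you cannot identify $\mathcal O_{Y'}(M_{Y'})$ with the eigensheaf $\mathcal N'$, and the $Y'$-side of your second paragraph collapses.

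The paper closes exactly this hole with a \emph{second} application of the covering trick: after taking $\tau:\overline Y\to Y$ from Lemma \ref{c-lem7.3}, it uses Lemma \ref{c-lem7.2} to construct a compatible finite cover $\widetilde\tau:\widetilde Y\to Y'$ dominating $\overline Y$, and then applies Lemma \ref{c-lem7.3} \emph{again} to obtain $\overline Y'\to\widetilde Y$ over which the induced setup does satisfy (iv) and (v). Only then does the Step 1 argument apply, to $\gamma':\overline Y'\to\overline Y$, and the conclusion descends through the finite covers $\tau$ and $\tau'$ by Lemma \ref{d-lem4.10}. Note also that your reduction never specifies how $Y'$ is to be replaced when $Y$ is replaced by its finite cover while preserving the simple normal crossing hypotheses; arranging that compatibility is precisely the role of Lemma \ref{c-lem7.2}, which your argument omits.
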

\begin{proof}
We divide the proof into the following two steps. 
\begin{step}\label{c-thm8.1-step1}
In this step, we further 
assume that $h: (V, B_V)\to Y$ and $h':(V', B_{V'})\to 
Y'$ satisfy the 
assumptions in (iv) and (v) in Proposition \ref{d-prop6.3}. 
Then the moduli parts $M_Y$ and $M_{Y'}$ are both integral divisors. 
By Theorem \ref{c-thm3.1}, there exists a natural 
isomorphism 
$$
\gamma^*\left(h_*\mathcal O_V(K_{V/Y}+(B^h_V)^{=1}
\right)\simeq h'_*
\mathcal O_{V'}(K_{V'/Y'}+(B^h_{V'})^{=1}), 
$$
which is compatible 
with the action of the Galois group $G=\mathbb Z/b\mathbb Z$ 
(see Proposition \ref{d-prop6.3}). 
Therefore, we have an induced isomorphism 
of eigensheaves corresponding to the eigenvalue $\zeta^{-1}$. 
Thus we obtain the isomorphism $\gamma^*\mathcal O_Y(M_Y)
\simeq 
\mathcal O_{Y'}(M_{Y'})$. 
This means that $\gamma^*M_Y-M_{Y'}$ is linearly trivial. 
If $\gamma$ is finite, then we know 
that $\gamma^*M_Y=M_{Y'}$ holds by Lemma \ref{d-lem4.10}. 
Therefore, $\gamma^*M_Y-M_{Y'}$ is exceptional over $Y$. 
More precisely, $\codim _Y\gamma(E)\geq 2$ holds for 
$E=\gamma^*M_Y-M_{Y'}$. 
Thus we get $\gamma^*M_Y=M_{Y'}$ in this special case 
since $\gamma^*M_Y-M_{Y'}$ is linearly trivial. 
\end{step}
\begin{step}\label{c-thm8.1-step2}
In this step we treat the general case. 
By Lemma \ref{c-lem7.3}, 
we can construct a finite cover $\tau:\overline Y\to Y$ such that an induced 
setup $\overline h: (\overline V, B_{\overline V})\overset{\overline g}
\longrightarrow (\overline X, B_{\overline X}) 
\overset{\overline f}\longrightarrow \overline Y$ 
as in \eqref{c-eq8.1} 
satisfies the assumptions in (iv) and (v) in Proposition \ref{d-prop6.3}. 
By construction, we may assume that 
there is a simple normal crossing divisor $\Sigma_1$ on $Y$ 
such that $\Sigma_Y\subset \Sigma_1$, 
$\tau$ is \'etale over $Y\setminus \Sigma_1$, 
and $\gamma^{-1}(\Sigma_1)$ is a simple 
normal crossing divisor on $Y'$. 
We may further assume that $\gamma^{-1}(\Sigma_1)\cup \Sigma_{Y'}$ 
is contained in a simple normal crossing divisor.  
By Lemma \ref{c-lem7.2}, 
we can construct the following commutative diagram: 
$$
\xymatrix{
\overline Y \ar[d]_-\tau& \widetilde Y\ar[d]^-{\widetilde\tau}
\ar[l]_-{\widetilde \gamma}\\ 
Y & Y'\ar[l]^-\gamma
}
$$ 
where $\widetilde \tau:\widetilde Y\to Y'$ is a finite 
cover from a smooth quasi-projective 
variety $\widetilde Y$, and there is 
a simple normal crossing divisor $\Sigma_2$ on $Y'$ such that 
$\gamma^{-1}(\Sigma_1)\cup \Sigma_{Y'}\subset \Sigma_2$, 
$\widetilde \tau$ is \'etale over $Y'\setminus \Sigma_2$, 
and $(\widetilde \tau)^{-1}(\Sigma_2)$ is a simple 
normal crossing divisor on $\widetilde Y$. 
Then we apply Lemma \ref{c-lem7.3} again. 
We get a finite cover $\overline Y'\to \widetilde Y$ from a 
smooth quasi-projective 
variety $\overline Y'$ and the following 
commutative diagram: 
$$
\xymatrix{&& \overline Y'\ar[dll]_-{\gamma'}\ar[dl]\ar[ddl]^-{\tau'}\\ 
\overline Y\ar[d]_-\tau & \widetilde Y \ar[d]_-{\widetilde \tau}\ar[l]^
-{\widetilde \gamma}& \\ 
Y & \ar[l]^-\gamma Y' &
}
$$
such that an induced setup 
$\overline h': (\overline V', B_{\overline V'})\overset{\overline g'}
\longrightarrow (\overline X', B_{\overline X'}) \overset{\overline f'}
\longrightarrow \overline Y'$ 
satisfies the assumptions in (iv) and (v) in Proposition \ref{d-prop6.3}. 
Hence, $\overline h: (\overline V, B_{\overline V})\to 
\overline Y$ and $\overline h': (\overline V', B_{\overline V'})\to 
\overline Y'$ satisfy the assumptions in Step \ref{c-thm8.1-step1}. 
Therefore, we get $M_{\overline Y'}=(\gamma')^*M_{\overline Y}$, 
We note that $\tau^*M_Y=M_{\overline Y}$ and 
$(\tau')^*M_{Y'}=M_{\overline Y '}$ hold by Lemma \ref{d-lem4.10} 
because $\tau$ and $\tau'$ are both finite. 
Thus we get $(\tau')^*(M_{Y'}-\gamma^*M_Y)=0$. 
This implies that $M_{Y'}=\gamma^*M_Y$ holds. 
\end{step}
Thus, we obtain $\gamma^*M_Y=M_{Y'}$. 
\end{proof}

\section{Proof of Theorem \ref{c-thm1.2}}\label{c-sec9} 

In this section, we prove Theorem \ref{c-thm1.2}, which is the main 
theorem of this paper. 
Theorem \ref{c-thm1.2} obviously generalizes \cite[Theorem 0.2]{ambro3} 
and \cite[Theorem 3.6]{fujino-gongyo}. 
Since we have already checked that 
the moduli part of a given basic slc-trivial 
fibration behaves well under pull-back by generically finite 
surjective morphisms 
with some mild assumptions in Theorem \ref{c-thm8.1}, 
there are no difficulties to prove Theorem \ref{c-thm1.2}. 

\medskip

Let us prove Theorem \ref{c-thm1.2}. 

\begin{proof}[Proof of Theorem \ref{c-thm1.2}]
Let $f:(X, B)\to Y$ be a basic slc-trivial fibration. 
As in \ref{c-say4.5}, we can write 
$$
K_X+B\sim _{\mathbb Q} f^*(K_Y+B_Y+M_Y). 
$$
Without loss of generality, by taking a projective 
birational modification $\sigma:Y'\to Y$ from 
a smooth quasi-projective variety $Y'$ and considering 
an induced basic slc-trivial fibration $f': (X', B_{X'})\to Y'$ of 
$f:(X, B)\to Y$, we may assume that 
$Y$ is a smooth quasi-projective variety. 
By Remark \ref{d-rem4.9}, we may further assume that 
$$
K_X+B+\frac{1}{b}(\varphi)=f^*(K_Y+B_Y+M_Y)
$$ 
holds, 
where $b=b(F, B_F)$ and $\varphi\in \Gamma (X, \mathcal K^*_X)$. 
It is sufficient to prove that 
the moduli $\mathbb Q$-b-divisor $\mathbf M$ 
is b-potentially nef in the above setup. 
By taking a birational modification of $X$ which is an isomorphism 
over the generic point of every stratum of $X$, 
we may assume that $\Supp (B-f^*(B_Y+M_Y))$ is a simple 
normal crossing divisor on $X$, $(B^h)^{=1}$ 
is a Cartier divisor on $X$, and 
every stratum of $(X, (B^h)^{=1})$ is dominant 
onto $Y$ (see, for example, \cite[Theorem 1.4 and 
Section 8]{bierstone-vera} and 
\cite[Lemma 2.11]{fujino-projectivity}). 
As in Section \ref{c-sec6}, 
by taking the $b$-fold cyclic cover $\pi: (\widetilde X, B_{\widetilde X})\to 
(X, B)$ associated to $$K_{X/Y}+B-f^*(B_Y+M_Y)=\frac{1}{b}(\varphi^{-1})$$ 
and a suitable birational modification 
$d: (V, B_V)\to (\widetilde X, B_{\widetilde X})$, 
we get $$h: (V, B_V)\overset{g}\longrightarrow (X, B)\overset{f}\longrightarrow 
Y.$$ Then we take a projective birational morphism 
$\sigma: Y'\to Y$ from a smooth quasi-projective 
variety $Y'$ and obtain an induced setup 
$h':(V', B_{V'})\overset{g'}\longrightarrow 
(X', B_{X'})\overset{f'}\longrightarrow Y'$ which 
satisfies conditions (a)--(g) in Section \ref{c-sec6}. 

From now on, we will prove that $\nu^*(\mathbf M_{Y'})
=\mathbf M_{Y''}$ and $\nu^*(K_{Y'}+\mathbf B_{Y'})
=K_{Y''}+\mathbf B_{Y''}$ hold for every proper birational morphism 
$\nu: Y''\to Y'$ from a normal variety $Y''$. We take a common resolution 
$$
\xymatrix{
& Y'''\ar[dl]_-p\ar[dr]^-q& \\ 
Y'& & \ar[ll]^-\nu Y''
}
$$ 
such that $Y'''$ is a smooth 
quasi-projective 
variety and that $p^{-1}(\Sigma_{Y'})$ is a simple 
normal crossing divisor on $Y'''$. 
We consider an induced setup 
$h''': (V''', B_{V'''})\overset{g'''}\longrightarrow 
(X''', B_{X'''})\overset{f'''}\longrightarrow Y'''$ as in 
Section \ref{c-sec8}. 
By Theorem \ref{c-thm8.1}, 
we get $p^*\mathbf M_{Y'}=\mathbf M_{Y'''}$. 
Thus we obtain  
$p^*(K_{Y'}+\mathbf B_{Y'})=K_{Y'''}+\mathbf B_{Y'''}$. 
Since $q: Y'''\to Y''$ is birational, 
$\nu^*\mathbf M_{Y'}=\mathbf M_{Y''}$ and 
$\nu^*(K_{Y'}+\mathbf B_{Y'})=K_{Y''}+\mathbf B_{Y''}$ follow 
from the above relations by taking $q_*$. 

Finally, we will prove that $\mathbf M_{Y'}$ is potentially nef. 
By Lemma \ref{d-lem4.12}, we can compactify $f:(X, B)\to Y$ and 
may assume that $X$ and $Y$ are both complete varieties. 
Therefore, it is sufficient to prove that $\mathbf M_{Y'}$ is nef. 
Let $\tau:\overline Y'\to Y'$ be a suitable finite 
cover from a smooth projective variety $\overline Y'$ 
as in Lemma \ref{c-lem7.3}. 
More precisely, $\overline h': (\overline V', B_{\overline V'})\to 
\overline Y'$ satisfies the assumptions in (iv) and (v) in 
Proposition \ref{d-prop6.3}, where 
$\overline h': (\overline V', B_{\overline V'}) 
\overset {\overline g'}\longrightarrow (\overline X', B_{\overline X'})
\overset{\overline f'}\longrightarrow \overline Y'$ 
is an induced setup from 
$h':(V', B_{V'})\overset{g'}\longrightarrow (X', B_{X'})
\overset{f'}\longrightarrow Y'$ by $\tau:\overline Y'\to Y'$. 
Then $\tau^*\mathbf M_{Y'}=\mathbf M_{\overline Y'}$ holds by 
Lemma \ref{d-lem4.10} since $\tau$ is finite. 
By Proposition \ref{d-prop6.3}, 
$\mathbf M_{\overline Y'}$ is a nef Cartier divisor. 
Therefore, $\mathbf M_{Y'}$ is nef. 
Hence, we obtain that $\mathbf M$ is b-potentially nef.  
\end{proof}

\section{Quasi-log canonical pairs}\label{c-sec10}

In this section, let us recall the basic definitions of quasi-log canonical 
pairs and prove a result on normal irreducible 
quasi-log canonical pairs, which will play a crucial role in the 
proof of Theorem \ref{c-thm1.7}. 
For the details of the theory of quasi-log 
schemes, see \cite[Chapter 6]{fujino-foundations}. 
We note that our formulation in \cite[Chapter 6]{fujino-foundations} 
is slightly different from Ambro's original one (see \cite{ambro2}). 

\medskip 

Let us start with the definition of globally embedded simple normal 
crossing pairs. We will soon use it for the definition of 
quasi-log canonical pairs (see Definition \ref{c-def10.2}). 

\begin{defn}[Globally embedded simple normal crossing 
pairs]\label{c-def10.1} 
Let $Y$ be a simple normal crossing 
divisor on a smooth variety $M$ and let $B$ be an $\mathbb R$-divisor 
on $M$ such that 
$Y$ and $B$ have no common irreducible components and 
that the support of $Y+B$ is a simple normal crossing divisor on $M$. In this 
situation, the pair $(Y, B_Y)$, where 
$B_Y:=B|_Y$, is called a {\em{globally embedded simple 
normal crossing pair}}.
\end{defn}

Of course, a globally embedded simple normal crossing pair is 
a simple normal crossing pair in the sense of Definition \ref{d-def2.15}. 
We note that a simple normal crossing variety can not always 
be embedded as a simple normal crossing divisor on a smooth 
variety (see \cite[Example 5.2.7]{fujino-foundations}). 
Therefore, a simple normal crossing pair is not necessarily 
a globally embedded simple normal crossing pair. 

\medskip

Let us quickly look at the definition of 
quasi-log canonical pairs. 

\begin{defn}[Quasi-log canonical pairs]\label{c-def10.2}
Let $X$ be a scheme and let $\omega$ be an 
$\mathbb R$-Cartier divisor (or an $\mathbb R$-line bundle) on $X$. 
Let $f:Y\to X$ be a proper morphism from a globally embedded 
simple normal 
crossing pair $(Y, B_Y)$. If the natural map 
$$\mathcal O_X\to f_*\mathcal O_Y(\lceil -(B_Y^{<1})\rceil)$$ is an 
isomorphism, $B_Y$ is a subboundary $\mathbb R$-divisor, 
and $$f^*\omega\sim _{\mathbb R} K_Y+B_Y$$ holds,  
then $(X, \omega, f:(Y, B_Y)\to X)$ 
or simply $[X, \omega]$ is called a {\em{quasi-log canonical pair}} 
({\em{qlc pair}}, for short). 

We say that $\left(X, \omega, f: (Y, B_Y)\to X\right)$ or 
$[X, \omega]$ has a {\em{$\mathbb Q$-structure}} if 
$B_Y$ is a $\mathbb Q$-divisor, $\omega$ is a $\mathbb Q$-Cartier 
divisor (or a $\mathbb Q$-line bundle), and 
$f^*\omega\sim _{\mathbb Q} K_Y+B_Y$ holds 
in the above definition. 
\end{defn}

Let $\left(X, \omega, f:(Y, B_Y)\to X\right)$ be a quasi-log canonical 
pair as in 
Definition \ref{c-def10.2}. 
Let $\nu:Y^\nu\to Y$ be the normalization. 
We put $K_{Y^\nu}+\Theta=\nu^*(K_Y+B_Y)$, that is, 
$\Theta$ is the sum of the inverse images of $B_Y$ and 
the singular locus of $Y$. 
Then $(Y^\nu, \Theta)$ is sub log canonical in the usual sense 
(see \ref{c-say2.2}). 
Let $W$ be a log canonical center of $(Y^\nu, \Theta)$ or 
an irreducible component of $Y^\nu$. 
Then $f\circ \nu(W)$ is called a {\em{qlc stratum}} of 
$\left(X, \omega, f:(Y, B_Y)\to X\right)$. If there is no danger 
of confusion, we simply call it a qlc stratum of $[X, \omega]$. 
If $C$ is a qlc stratum of $[X, \omega]$ but is not 
an irreducible component of $X$, then $C$ is called 
a {\em{qlc center}} of $\left(X, \omega, f:(Y, B_Y)\to X\right)$ or 
simply of $[X, \omega]$. 
The union of all qlc centers of $[X, \omega]$ is denoted by 
$\Nqklt(X, \omega, f:(Y, B_Y)\to X)$ or simply by  
$\Nqklt (X, \omega)$. It is important that by 
adjunction (see \cite[Theorem 6.3.5 (i)]{fujino-foundations}) 
$[\Nqklt (X, \omega), \omega|_{\Nqklt(X, \omega)}]$ has a 
natural quasi-log canonical structure induced by 
$\left(X, \omega, f:(Y, B_Y)\to X\right)$. 
\medskip 

The following theorem is the main result of this section. 
Although this is a special case of \cite[Theorem 1.1]{fujino-haidong}, 
we give a detailed proof for the reader's convenience. 

\begin{thm}[{see \cite[Theorem 1.1]{fujino-haidong}}]\label{c-thm10.3} 
Let 
$
\left(X, \omega, f:(Y, B_Y)\to X\right)
$ 
be a quasi-log canonical pair. 
Assume that $X$ is a normal irreducible variety. 
Then we can construct a projective surjective morphism 
$f':Y'\to X$ with the following properties: 
\begin{itemize}
\item[(i)] $(Y', B_{Y'})$ is a globally embedded simple 
normal crossing pair, $Y'$ is quasi-projective, $B_{Y'}$ is a subboundary 
$\mathbb R$-divisor, 
and $K_{Y'}+B_{Y'}\sim _{\mathbb R} (f')^*\omega$, 
\item[(ii)] the natural map 
$\mathcal O_X\to f'_*\mathcal O_{Y'}(\lceil 
-(B^{<1}_{Y'})\rceil)
$ is an isomorphism, and 
\item[(iii)] every stratum of $Y'$ is dominant onto $X$. 
\end{itemize}
Therefore, $\left(X, \omega, f': (Y', B_{Y'})\to X\right)$ 
is also a quasi-log canonical pair. 
Moreover, we have: 
\begin{itemize}
\item[(iv)] if $C$ is a qlc stratum of 
$\left(X, \omega, f':(Y', B_{Y'})\to X\right)$ then 
$C$ is a qlc stratum of  
$\left(X, \omega, f:(Y, B_{Y})\to X\right)$, and 
\item[(v)] $\Nqklt(X, \omega, f':(Y', B_{Y'})\to X)
=\Nqklt(X, \omega, f:(Y, B_{Y})\to X)$. 
\end{itemize}
Furthermore, if $K_Y+B_Y\sim _{\mathbb Q}f^*\omega$, 
then $K_{Y'}+B_{Y'}\sim _{\mathbb Q}(f')^*\omega$ holds 
by construction. 
We note that if $\left(X, \omega, f':(Y', B_{Y'})\to X\right)$ has a 
$\mathbb Q$-structure then 
$f:(Y', B_{Y'})\to X$ is a basic slc-trivial fibration 
in the sense of Definition \ref{c-def4.1}. 
\end{thm}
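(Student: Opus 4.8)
The goal is to produce, from an arbitrary qlc pair $\left(X, \omega, f:(Y, B_Y)\to X\right)$ with $X$ normal and irreducible, a new model $f':Y'\to X$ enjoying the extra ``dominance'' property (iii): every stratum of $Y'$ maps onto $X$. The natural strategy is to throw away the bad strata of $Y$ — those not dominating $X$ — by a sequence of blow-ups and then restrict to the part that does dominate. First I would pass to a log resolution: by \cite[Theorem 1.4]{bierstone-vera} there is a projective birational morphism $g:Y_1\to Y$ from a simple normal crossing variety with $K_{Y_1}+B_{Y_1}=g^*(K_Y+B_Y)$ where $B_{Y_1}=B^{\leq 1}_{Y_1}$, $\Supp B_{Y_1}$ is simple normal crossing, and moreover $Y_1$ can be taken globally embedded. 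Crucially, since $f\circ g$ is proper and $X$ is irreducible, we may arrange (by further blow-ups along the non-dominant strata, i.e.\ the preimage of $\Nqklt$-type loci and the images that are proper subsets of $X$) that the strata of $Y_1$ come in two kinds: those dominant onto $X$, and those whose image is a proper closed subset. The key point is that $g_*\mathcal O_{Y_1}(\lceil -(B^{<1}_{Y_1})\rceil)=\mathcal O_Y(\lceil -(B^{<1}_Y)\rceil)$, so property (ii) is preserved under crepant log resolution.

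The next step is to excise. Let $Y'$ be the union of the irreducible components of $Y_1$ that are dominant onto $X$, together with the appropriate induced boundary $B_{Y'}$. Here one must check that $(Y', B_{Y'})$ is still a globally embedded simple normal crossing pair (automatic, as it is a union of components of a globally embedded one) and, more delicately, that the natural map $\mathcal O_X\to f'_*\mathcal O_{Y'}(\lceil -(B^{<1}_{Y'})\rceil)$ is still an isomorphism. This is the heart of the argument: one uses the standard exact sequence relating $Y_1$, $Y'$, and the ``other'' components $Y''=\overline{Y_1\setminus Y'}$, together with the fact that every qlc center (equivalently, every stratum of $Y''$, or of the intersection $Y'\cap Y''$) that is not dominant has image a proper subset of $X$ — so its contribution to $f'_*(\cdots)$ is supported in codimension $\geq 1$ and, by the isomorphism $\mathcal O_X\simeq f_*\mathcal O_Y(\lceil-(B^{<1}_Y)\rceil)$ on the normal variety $X$, cannot change $H^0$. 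Concretely I would use the vanishing/torsion-freeness machinery of \cite[Chapter 5]{fujino-foundations} (Koll\'ar-type injectivity for simple normal crossing pairs) applied to the short exact sequence $0\to \mathcal O_{Y''}(\lceil-(B^{<1}_{Y''})\rceil-Y')\to \mathcal O_{Y_1}(\lceil-(B^{<1}_{Y_1})\rceil)\to \mathcal O_{Y'}(\lceil-(B^{<1}_{Y'})\rceil)\to 0$ to conclude that $f'_*$ of the right-hand term agrees with $f_*$ of the middle after pushing to $X$, hence equals $\mathcal O_X$.

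Properties (iv) and (v) then come almost for free: a qlc stratum of $(X,\omega,f':(Y',B_{Y'})\to X)$ is the image of an lc center of (the normalization of) $Y'$, hence of an lc center of $Y_1$, hence — by crepancy of $g$ — a qlc stratum of the original pair; the converse inclusion of $\Nqklt$ loci uses that we only discarded strata dominant onto $X$ is not quite right, so one must be a little careful: we keep all components dominant onto $X$ and, by doing enough blow-ups first, ensure that the qlc centers of $[X,\omega]$ that happen to be proper subsets of $X$ are still ``seen'' by appropriate strata of $Y'$ lying over them — this is exactly why the initial resolution must be chosen so that $f'(B^{=1}_{Y'})$ still covers $\Nqklt(X,\omega)$, matching the ``furthermore'' clause of Theorem \ref{c-thm1.7}. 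The $\mathbb Q$-statements are immediate since the whole construction is crepant and blow-ups preserve $\sim_{\mathbb Q}$. Finally, the last sentence — that a $\mathbb Q$-structured such pair is a basic slc-trivial fibration — is a matter of checking Definition \ref{c-def4.1}: conditions (1)--(3) are built into (i) and (iii), condition (4) is $K_{Y'}+B_{Y'}\sim_{\mathbb Q}(f')^*\omega$ with $\omega$ pulled back from the point-like base $X$ (here $D=\omega$ and $Y$ in Definition \ref{c-def4.1} is our $X$), and condition (5), $\rank f'_*\mathcal O_{Y'}(\lceil-(B^{<1}_{Y'})\rceil)=1$, is exactly (ii) combined with $X$ being irreducible. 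I expect the main obstacle to be the cohomological bookkeeping in the excision step — verifying that deleting the non-dominant components does not alter $f'_*\mathcal O_{Y'}(\lceil-(B^{<1}_{Y'})\rceil)$ — and getting the blow-ups at the start arranged so that both (ii) and the $\Nqklt$-matching in (v) survive simultaneously.
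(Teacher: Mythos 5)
Your overall strategy matches the paper's: arrange the model so that non-dominant strata live in entire irreducible components, discard those components, and verify that the isomorphism $\mathcal O_X\simeq f'_*\mathcal O_{Y'}(\lceil -(B^{<1}_{Y'})\rceil)$ survives. The paper achieves the first step by quoting \cite[Proposition 6.3.1]{fujino-foundations}, which arranges exactly that the union of strata of $(Y,B_Y)$ mapping to $\Nqklt(X,\omega)$ is a union $Y''$ of whole components; it then sets $Y'=Y-Y''$. (Your ``components dominant onto $X$'' agrees with this, since a non-dominant component maps into a qlc center, hence into $\Nqklt(X,\omega)$.) One step you omit but the paper needs: after removing $Y''$ it is not yet clear that $f'_*\mathcal O_{Y'}\simeq\mathcal O_X$; the paper handles this via the Stein factorization $Y'\to V\to X$ and Zariski's main theorem, using that the inclusion $Y'\hookrightarrow Y$ is an isomorphism over the generic point of $X$ to see $V\to X$ is finite birational, hence an isomorphism.

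Where you genuinely diverge is the heart of the argument, property (ii). You propose a short exact sequence
$0\to \mathcal O_{Y''}(\lceil-(B^{<1}_{Y''})\rceil-Y'|_{Y''})\to \mathcal O_{Y_1}(\lceil-(B^{<1}_{Y_1})\rceil)\to \mathcal O_{Y'}(\lceil-(B^{<1}_{Y'})\rceil)\to 0$
and then invoke the Koll\'ar-type injectivity/torsion-freeness theorems of \cite[Chapter 5]{fujino-foundations}. This is heavier than necessary and, as stated, has a gap: pushing the sequence forward gives $\mathcal O_X\hookrightarrow f'_*\mathcal O_{Y'}(\cdots)$ once you know $f_*$ of the left-hand sheaf vanishes (it is torsion-free and supported in codimension $\geq 1$, so that is fine), but to conclude it is surjective you would need to control the image of $f'_*\mathcal O_{Y'}(\cdots)\to R^1 f_*(\text{left term})$, and the left-hand sheaf is not of the $\omega_{Y''}(\Delta)$ shape to which the Chapter~5 torsion-freeness theorems apply, so it is not clear those tools give what you want. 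The paper instead observes that $f'_*\mathcal O_{Y'}(\lceil -(B^{<1}_{Y'})\rceil)$ is torsion-free on the normal variety $X$, hence $\alpha:\mathcal O_X\to f'_*\mathcal O_{Y'}(\cdots)$ is an isomorphism as soon as it is one in codimension one; and over the generic point of any prime divisor $P\subset\Nqklt(X,\omega)$, the connectedness of the fibers of $f$ plus the presence of a component of $B^{=1}_{Y'}$ dominating $P$ show that the effective divisor $\lceil -(B^{<1}_{Y'})\rceil$ does not contain the whole fiber, so $\alpha$ is an isomorphism there. This codimension-one argument is elementary and avoids any vanishing theorem; you should replace your SES/injectivity step with it.
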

\begin{proof}
By \cite[Proposition 6.3.1]{fujino-foundations}, 
we may assume that $Y$ is quasi-projective 
and that the union of all strata of $(Y, B_Y)$ mapped 
to $\Nqklt(X, \omega, f:(Y, B_Y)\to X)$, which is denoted by 
$Y''$, is a union of some irreducible components of $Y$ by 
taking some suitable blow-ups of 
the ambient space $M$ of $Y$. We put $Y'=Y-Y''$ and 
$K_{Y'}+B_{Y'}=(K_Y+B_Y)|_{Y'}$. 
Then we obtain the following 
commutative diagram: 
$$
\xymatrix{
Y' \ar[d]_{f'}\ar@{^{(}->}[r]^\iota& Y \ar[d]^f\\ 
V \ar[r]_p& X
}
$$
where $\iota:Y'\hookrightarrow Y$ is a natural closed immersion 
and 
$$
\xymatrix{
Y' \ar[r]^{f'}& V\ar[r]^p & X
}
$$ is the Stein factorization of $f\circ \iota:Y'\to X$. 
By construction, the natural map $\mathcal O_V\to 
f'_*\mathcal O_{Y'}$ is an isomorphism and 
every stratum of 
$Y'$ is dominant onto $V$. 
By construction again, 
$\iota: Y'\hookrightarrow Y$ is an isomorphism over the 
generic point of $X$. 
Therefore, $p$ is birational. 
Thus, $p: V\to X$ is an isomorphism by Zariski's main theorem since $X$ is 
normal and $p$ is a finite birational morphism. 
So we have the following commutative diagram. 
$$
\xymatrix{
Y' \ar[d]_{f'}\ar@{^{(}->}[r]^\iota& Y \ar[d]^f\\ 
X \ar@{=}[r]& X
}
$$
By construction, it is obvious that 
$B_{Y'}$ is a subboundary $\mathbb R$-divisor and that 
$K_{Y'}+B_{Y'}\sim _{\mathbb R} (f')^*\omega$ holds. 
Of course, if $K_Y+B_Y\sim _{\mathbb Q}f^*\omega$, then 
$K_{Y'}+B_{Y'}\sim _{\mathbb Q}(f')^*\omega$. 
\begin{claim}
The natural map 
$$
\alpha: \mathcal O_X\to f'_*\mathcal O_{Y'}(\lceil -(B^{<1}_{Y'})\rceil)
$$ 
is an isomorphism. 
\end{claim}
\begin{proof}[Proof of Claim]
Since $X$ is normal and $f'_*\mathcal O_{Y'}(\lceil -(B^{<1}_{Y'})\rceil)$ is 
torsion-free, it is sufficient to see that $\alpha$ is an isomorphism 
in codimension one. 
Let $P$ be a prime divisor on $X$ such that 
$P\subset \Nqklt (X, \omega, f:(Y, B_Y)\to X)$. 
We note that every fiber of $f$ is connected by $f_*\mathcal O_Y\simeq 
\mathcal O_X$. 
Thus, by construction, there exists an irreducible 
component of $B^{=1}_{Y'}$ which maps onto $P$. 
Therefore, the effective divisor $\lceil -(B^{<1}_{Y'})\rceil$ does not 
contain the whole fiber of $f'$ over the generic point of $P$. Thus, 
$\alpha$ is an isomorphism at the generic point of $P$. 
This implies that the natural map $\alpha$ is an isomorphism. 
\end{proof}
By Claim, 
$(X, \omega, f':(Y', B_{Y'})\to X)$ is a quasi-log canonical pair. 
By construction, if $C$ is a qlc stratum of $(X, \omega, f':(Y', B_{Y'})\to X)$ 
then $C$ is a qlc stratum of $(X, \omega, f:(Y, B_{Y})\to X)$. 
By construction again, it is easy to see that 
$$
\mathcal I_{\Nqklt \left(X, \omega, f:(Y, B_{Y})\to X\right)}
=f'_*\mathcal O_{Y'}(\lceil -(B^{<1}_{Y'})\rceil-Y''|_{Y'})
=\mathcal I_{\Nqklt \left(X, \omega, f':(Y', B_{Y'})\to X\right)}
$$ 
(see the proof of \cite[Theorem 6.3.5 (i)]{fujino-foundations}). 
Therefore, this new quasi-log canonical pair 
$(X, \omega, f':(Y', B_{Y'})\to X)$ is the desired one. 
We note that $f':(Y', B_{Y'})\to X$ is a basic slc-trivial fibration 
in the sense of Definition \ref{c-def4.1} when 
$\left(X, \omega, f': (Y', B_{Y'})\to X\right)$ has a 
$\mathbb Q$-structure. 
\end{proof}

Theorem \ref{c-thm10.3} is one of the main motivations 
to introduce the notion of basic slc-trivial 
fibrations. 

\medskip 

We close this section with an important remark on {\em{embedded}} 
qlc centers. 

\begin{rem}\label{c-rem10.4}
In Theorem \ref{c-thm10.3}, let $C$ be 
an embedded qlc center of $(X, \omega, f:(Y, B_Y)\to X)$, that is, 
$C$ is a qlc center of $(X, \omega, f:(Y, B_Y)\to X)$ that is 
not an irreducible component of $\Nqklt(X, \omega, f:(Y, B_Y)\to X)$. 
Then it is not clear whether $C$ is also a qlc center of 
$(X, \omega, f':(Y', B_{Y'})\to X)$ or not by the above construction of 
$f': (Y', B_{Y'})\to X$. In Theorem \ref{c-thm10.3}, we just claim that 
the equality 
$$\Nqklt(X, \omega, f':(Y', B_{Y'})\to X)
=\Nqklt(X, \omega, f:(Y, B_{Y})\to X)$$ holds. 
\end{rem}

\section{Structure theorem for normal qlc 
pairs}\label{c-sec11}
 
In this section, we prove Theorem \ref{c-thm1.7}. 
We believe that Theorem \ref{c-thm1.7} will make the theory of quasi-log 
schemes more powerful and flexible. We treat various nontrivial 
applications 
of Theorem \ref{c-thm1.7} in \cite{fujino-haidong2}, 
\cite{fujino-haidong3}, and 
\cite{fujino-wenfei2}. 

\medskip 

Let us start with the following elementary lemma. 

\begin{lem}\label{c-lem11.1}
Let $\left(X, \omega, f: (Y, B_Y)\to X\right)$ be a quasi-log canonical 
pair such that $X$ is a normal irreducible variety and that every stratum 
of $Y$ is dominant onto $X$. 
Then we obtain a $\mathbb Q$-divisor $D_i$ on $Y$, 
a $\mathbb Q$-Cartier divisor $\omega_i$ on $X$, and a positive 
real number $r_i$ for $1\leq i\leq k$ such that 
\begin{itemize}
\item[(i)] $\sum _{i=1}^k r_i=1$, 
\item[(ii)] $D_i=D^{\leq 1}_i$, $\Supp D_i=\Supp B_Y$, 
$D^{=1}_i=B^{=1}_Y$, and $\lceil -(D^{<1}_i)\rceil=\lceil -(B^{<1}_Y)\rceil$ 
for every $i$, 
\item[(iii)] $\omega=\sum _{i=1}^k r_i\omega_i$ and 
$B_Y=\sum _{i=1}^k r_i D_i$, and 
\item[(iv)] $\left(X, \omega_i, f:(Y, D_i)\to X\right)$ is a quasi-log canonical 
pair with $K_Y+D_i\sim _{\mathbb Q} f^*\omega_i$ for every $i$. 
\end{itemize}
\end{lem}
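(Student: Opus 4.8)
The plan is to write $\omega\sim_{\mathbb R}K_Y+B_Y$ as an explicit $\mathbb R$-linear combination of principal divisors plus $f^*$ of something, and then perturb the coefficients by rational numbers so that each perturbed relation is $\mathbb Q$-linear. More precisely, first I would use the defining relation $f^*\omega\sim_{\mathbb R}K_Y+B_Y$ of the quasi-log canonical structure to write
$$
K_Y+B_Y=f^*\omega+\sum_{j=1}^s a_j(g_j),
$$
where $a_j\in\mathbb R$ and $g_j\in\Gamma(Y,\mathcal K^*_Y)$. Since $X$ is normal, we may also choose an $\mathbb R$-Cartier representative of $\omega$ in the form $\omega=\omega_0+\sum_{t}b_t(\overline g_t)$ with $\omega_0$ a $\mathbb Q$-Cartier $\mathbb Q$-divisor and $b_t\in\mathbb R$; absorbing the pullbacks of the $(\overline g_t)$ into the $(g_j)$'s on $Y$, we reduce to the situation where the only irrational data live in the coefficients $a_j$ of finitely many fixed principal divisors and, separately, in the coefficients of the prime components of $B_Y$ that are $<1$ (the components with coefficient $1$ being fixed rational data).

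Next I would view the coefficients $(a_1,\dots,a_s)$ together with the coefficients of $B^{<1}_Y$ that are not already rational as a point $v\in\mathbb R^m$, and observe that the conditions we need — namely $B_Y^{=1}$ unchanged, $\Supp$ unchanged, $\lceil-(B^{<1}_Y)\rceil$ unchanged, and the relation remaining a genuine linear equivalence after pulling back by $f$ — are all preserved under sufficiently small perturbations of $v$ within the rational affine subspace cut out by $K_Y+B_Y-f^*\omega=\sum a_j(g_j)$ (this last is a $\mathbb Q$-linear system in the $a_j$ and the divisor coefficients, because the $(g_j)$ and the prime divisors involved are fixed). Indeed $D^{=1}_i=B^{=1}_Y$ and $\Supp D_i=\Supp B_Y$ hold as long as we keep all coefficients $<1$ in $(0,1)$ close to their original values and do not move the coefficient-$1$ part, and $\lceil-(D^{<1}_i)\rceil=\lceil-(B^{<1}_Y)\rceil$ holds because rounding is locally constant away from the integers. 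So I would pick finitely many rational points $v_1,\dots,v_k$ in a small rational-polytope neighborhood of $v$ inside that rational affine subspace, with $v$ a convex combination $v=\sum r_i v_i$, $r_i\in\mathbb R_{>0}$, $\sum r_i=1$ — this is possible precisely because $v$ lies in the interior of a rational polytope, by a standard Carathéodory-type argument. Each $v_i$ determines a $\mathbb Q$-divisor $D_i$ with $D_i=D^{\leq1}_i$ and a $\mathbb Q$-Cartier $\mathbb Q$-divisor $\omega_i$ on $X$ (read off from the rational relation) so that $K_Y+D_i\sim_{\mathbb Q}f^*\omega_i$, and by linearity $\sum r_iD_i=B_Y$, $\sum r_i\omega_i=\omega$.

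Finally I would check (iv): for each $i$, $(Y,D_i)$ is a globally embedded simple normal crossing pair with the same ambient data as $(Y,B_Y)$, $D_i$ is a subboundary, $K_Y+D_i\sim_{\mathbb Q}f^*\omega_i$ holds by construction, and the isomorphism $\mathcal O_X\xrightarrow{\sim}f_*\mathcal O_Y(\lceil-(D^{<1}_i)\rceil)$ follows from the corresponding isomorphism for $B_Y$ together with (ii), since $\lceil-(D^{<1}_i)\rceil=\lceil-(B^{<1}_Y)\rceil$. Hence $\left(X,\omega_i,f:(Y,D_i)\to X\right)$ is a quasi-log canonical pair with a $\mathbb Q$-structure. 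The main obstacle, and the step deserving the most care, is the bookkeeping in the first two paragraphs: isolating exactly which data are genuinely rational (the coefficient-$1$ part of $B_Y$, the support, the round-ups, the divisors $(g_j)$) from the finitely many real parameters, and verifying that the constraint defining the admissible parameters is a $\mathbb Q$-affine condition so that rational points are dense in its relative interior — everything after that is the routine convex-combination argument.
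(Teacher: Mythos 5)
Your overall strategy is the same as the paper's: isolate the finitely many real parameters (the irrational coefficients of $B_Y$, the coefficients of the principal divisors, and the coefficients of $\omega$), observe that the qlc relation cuts out a nonempty affine subspace defined over $\mathbb Q$ in this parameter space, and write the given real point as a convex combination of nearby rational points of that subspace; conditions (ii) and (iv) then follow because the perturbation is small and $\lceil -(D^{<1}_i)\rceil=\lceil -(B^{<1}_Y)\rceil$. The paper packages exactly this bookkeeping into a $\mathbb Q$-defined linear map $\psi\colon \mathbb R^{l+m+n}\to \Gamma(Y, \mathcal K^*_Y/\mathcal O^*_Y)\otimes _{\mathbb Z}\mathbb R$ and takes $\mathcal A=\psi^{-1}\left(K_Y+\sum _{j\geq l+1}b_jB_j\right)$.

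There is, however, one genuine error in your reduction: the claim that, $X$ being normal, one may write $\omega=\omega_0+\sum _t b_t(\overline g_t)$ with $\omega_0$ a $\mathbb Q$-Cartier $\mathbb Q$-divisor and $b_t\in \mathbb R$. An $\mathbb R$-Cartier divisor is only an $\mathbb R$-linear combination of Cartier divisors and need not be $\mathbb R$-linearly equivalent to any $\mathbb Q$-Cartier divisor; for instance $\omega=\sqrt 2\, H$ on $\mathbb P^1$ admits no such representative, since its degree is irrational. Note that the lemma only asserts $\omega=\sum _i r_i\omega_i$ with each $\omega_i$ $\mathbb Q$-Cartier, not that $\omega$ is rational modulo principal divisors, so your reduction proves something false in general and then the subsequent argument never lets the $\omega_i$ genuinely vary. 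The correct bookkeeping is to write $\omega=\sum _p a_p\omega_p$ with $\omega_p$ Cartier and $a_p\in \mathbb R$ and to include the $a_p$ among the coordinates being perturbed, so that the $\omega_i$ move with the perturbation; this is precisely the role of the terms $\sum _\alpha x_\alpha f^*\omega_\alpha$ in the paper's map $\psi$. With that correction the rest of your argument (density of rational points in a nonempty $\mathbb Q$-affine subspace, the convex-combination step, and the verification of (ii) and (iv) via the unchanged round-up) is sound and agrees with the paper.
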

\begin{proof}
We put $B_Y=\sum _j b_j B_j$, where $B_j$ is a simple normal crossing divisor 
on $Y$ for every $j$, $b_{j_1}\ne b_{j_2}$ for $j_1\ne j_2$, and 
$\Supp B_{j_1}$ and $\Supp B_{j_2}$ have no common irreducible components 
for $j_1\ne j_2$. We may assume that $b_j\in \mathbb R\setminus \mathbb Q$ 
for $1\leq j\leq l$ and 
$b_j\in \mathbb Q$ for $j\geq l+1$. 
We put $\omega=\sum _{p=1}^m a_p \omega_p$, where 
$a_p\in \mathbb R$ and $\omega_p$ is a Cartier divisor 
on $X$ for every $p$. 
We can write 
$$
K_Y+B_Y+\sum _{q=1}^n c_q (\varphi_q)=\sum _{p=1}^m a_p f^*\omega_p
$$ 
where $c_q\in \mathbb R$ and $\varphi_q\in \Gamma (Y, \mathcal K^*_Y)$ for 
every $q$. We consider the following linear map 
$$
\psi: \mathbb R^{l+m+n}  \longrightarrow 
\Gamma (Y, \mathcal K^*_Y/\mathcal O^*_Y)
\otimes _{\mathbb Z}\mathbb R
$$
defined by 
$$
\psi(x_1, \ldots, x_{l+m+n})=\sum _{\alpha=1}^m x_\alpha f^*\omega_\alpha
-\sum _{\beta=1}^n x_{m+\beta}(\varphi_\beta)-\sum _{\gamma=1}^l 
x_{m+n+\gamma}B_\gamma. 
$$
We note that $\psi$ is defined over $\mathbb Q$. 
By construction, 
$$\mathcal A:=\psi^{-1}\left(K_Y+\sum _{j\geq l+1} b_j B_j\right)$$ 
is a nonempty affine subspace of $\mathbb R^{l+m+n}$ defined over 
$\mathbb Q$. We put 
$$
P:=(a_1, \ldots, a_m, c_1, \ldots, c_n, b_1, \ldots, b_l)\in \mathcal A. 
$$
We can take $P_1, \ldots, P_k \in \mathcal A\cap \mathbb Q^{l+m+n}$ and 
$r_1, \ldots, r_k\in \mathbb R_{>0}$ such that 
$\sum _{i=1}^k r_i=1$ and $\sum _{i=1}^k r_i P_i=P$ in $\mathcal A$. 
Note that we can make $P_i$ arbitrary close to $P$ for every $i$. 
So we may assume that $P_i$ is sufficiently close to $P$ for every $i$. 
For each $P_i$, we obtain 
\begin{equation}\label{c-eq11.1}
K_Y+D_i\sim _{\mathbb Q} f^*\omega_i
\end{equation}
which satisfies (ii) by using $\psi$. 
By construction, (i) and (iii) hold. 
By \eqref{c-eq11.1} and (ii), $$\left(X, \omega_i, f: (Y, D_i)\to X\right)$$ 
is a quasi-log canonical pair for every $i$. 
Therefore, we get (iv). 
\end{proof}

We prepare one more lemma for the proof of Theorem \ref{c-thm1.7}, 
which is essentially contained in \cite[Chapter 6]{fujino-foundations}. 

\begin{lem}\label{c-lem11.2}
Let $\left(X, \omega, f:(Y, B_Y)\to X\right)$ be a quasi-log canonical 
pair such that $X$ is a normal irreducible variety. 
We assume that every stratum of $Y$ is dominant onto $X$. 
Let $P$ be a prime divisor on $X$ which is Cartier. 
We put 
$$
b_P:=\max \left\{t \in \mathbb R\, \left|\, 
\begin{array}{l}  {\text{$(Y, B_Y+tf^*P)$ is sub slc over}}\\
{\text{the generic point of $P$}} 
\end{array}\right. \right\}.  
$$ 
Then $b_P\leq 1$ holds. 
\end{lem}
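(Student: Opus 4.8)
The plan is to reduce the statement to a calculation on the normalization $Y^\nu$ of $Y$ and then to exploit the fact that the map $\mathcal O_X \to f_*\mathcal O_Y(\lceil -(B_Y^{<1})\rceil)$ is an isomorphism, which forces the effective divisor $\lceil -(B_Y^{<1})\rceil$ to not contain a whole fiber of $f$ over the generic point of $P$. First I would shrink $X$ around the generic point of $P$ so that $P$ is Cartier and, by the argument in the proof of the Claim inside Theorem \ref{c-thm10.3}, there is a component of $B_Y^{=1}$ dominating $P$; more precisely, passing to $Y^\nu$ with $K_{Y^\nu}+\Theta = \nu^*(K_Y+B_Y)$ (so $\Theta$ is the sum of the strict transform of $B_Y$ and the conductor), sub-slc of $(Y,B_Y + t f^*P)$ over the generic point of $P$ is by definition sub-lc of $(Y^\nu, \Theta + t (f\circ\nu)^*P)$ there, so it suffices to prove $b_P \le 1$ for the normal pair $(Y^\nu,\Theta)$, and we may further assume $Y^\nu$ and $X$ smooth near the generic point of $P$ by taking a log resolution.

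Next I would set up the explicit coefficient computation as in Remark \ref{c-rem4.7}. Write $(f\circ\nu)^*P = \sum_j w_j Q_j$ with $f\circ\nu(Q_j) = P$ for every $j$ (after shrinking $X$ so that no component of the fiber over the generic point of $P$ is vertical in a degenerate way — this uses that every stratum of $Y$, hence every component of $Y^\nu$, is dominant onto $X$), and put $d_j = \mathrm{mult}_{Q_j}\Theta$. Then $(Y^\nu, \Theta + t(f\circ\nu)^*P)$ is sub-lc over the generic point of $P$ exactly when $d_j + t w_j \le 1$ for all $j$, so $b_P = \min_j \frac{1 - d_j}{w_j}$. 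Hence $b_P \le 1$ is equivalent to: there exists $j$ with $d_j + w_j - 1 \ge 0$, i.e. $d_j \ge 1 - w_j$. Since $w_j \ge 1$ is a positive integer, it suffices to find one $j$ with $d_j \ge 0$; and in fact the key point is that we can find one $j$ with $d_j = 1$, which gives $d_j + w_j - 1 = w_j \ge 0$.

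The substantive step — and the main obstacle — is producing that component $Q_j$ dominating $P$ with $\mathrm{mult}_{Q_j}\Theta = 1$. This is exactly where the isomorphism $\mathcal O_X \xrightarrow{\sim} f_*\mathcal O_Y(\lceil -(B_Y^{<1})\rceil)$ enters. If every component $Q_j$ of the fiber over the generic point of $P$ had $d_j < 1$, then $\lceil -(B_Y^{<1})\rceil = \lceil -\Theta^{<1}\rceil$ would be $\ge 0$ on all of them and in particular would contain a section-free ample contribution forcing $f_*\mathcal O_Y(\lceil -(B_Y^{<1})\rceil)$ to be strictly larger than $\mathcal O_X$ at the generic point of $P$, or more precisely the natural inclusion could not be surjective there — contradicting that $\mathcal O_X \to f_*\mathcal O_Y(\lceil-(B_Y^{<1})\rceil)$ is an isomorphism. (Concretely: connectedness of fibers, from $f_*\mathcal O_Y \simeq \mathcal O_X$ which follows from the qlc hypothesis after the reductions, together with the isomorphism, forces $\lceil -(B_Y^{<1})\rceil$ not to contain the whole fiber, and since the fiber over the generic point of $P$ is a connected simple normal crossing configuration all of whose components dominate $P$, this means some component has coefficient $\ge 1$ in $B_Y$, i.e.\ $d_j \ge 1$, hence $d_j = 1$ as $B_Y$ is a subboundary.) I would carry out this argument carefully at the level of $Y$ before normalizing, citing the proof of \cite[Theorem 6.3.5]{fujino-foundations} and the Claim in Theorem \ref{c-thm10.3} for the fiber-containment statement, and then conclude $b_P = \min_j\frac{1-d_j}{w_j} \le \frac{1 - d_{j_0}}{w_{j_0}} = \frac{1 - 1}{w_{j_0}} = 0 \le 1$, which even gives the sharper bound $b_P \le 0$ but in any case establishes $b_P \le 1$.
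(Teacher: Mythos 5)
Your reduction to the coefficient formula is fine: after a log resolution, working near the generic point of $P$, writing $f^*P=\sum_j w_jQ_j$ with $Q_j\to P$ and $d_j=\mult_{Q_j}B_Y$, you correctly get $b_P=\min_j(1-d_j)/w_j$, so the question is whether $d_j+w_j\geq 1$ for some $j$. But the substantive step you then take is wrong, and this is where the proposal breaks down.

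You claim that the isomorphism $\mathcal O_X\to f_*\mathcal O_Y(\lceil -(B_Y^{<1})\rceil)$ forces some $Q_{j_0}$ over $P$ with $d_{j_0}=1$; equivalently, you claim that $P$ is always a qlc center of $[X,\omega]$. This is false — for a generic $P$ in a qlc pair with $\Nqklt(X,\omega)=\emptyset$, all the $d_j$ are $<1$ and $b_P$ can equal $1$. The source of the error is that you invoke the Claim in the proof of Theorem \ref{c-thm10.3} in the wrong direction. That Claim uses the hypothesis $P\subset\Nqklt(X,\omega)$ to produce a component with $d_{j_0}=1$; the conclusion ``$\lceil -(B_Y^{<1})\rceil$ does not contain the whole fiber'' is derived from that hypothesis, not from the isomorphism. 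Here you want the converse: ``isomorphism near $P$ $\Rightarrow$ $\lceil -(B_Y^{<1})\rceil$ misses a fiber component.'' That converse fails: the isomorphism at the generic point of $P$ only says $f^*P\not\leq\lceil -(B_Y^{<1})\rceil$, i.e.\ that $\mult_{Q_j}\lceil -(B_Y^{<1})\rceil<w_j$ for some $j$; it does \emph{not} say some multiplicity is zero. It is entirely possible for $\lceil -(B_Y^{<1})\rceil$ to contain every $Q_j$ with multiplicity $1$ while $w_j\geq 2$, and the isomorphism still holds. Your resulting conclusion $b_P\leq 0$ is therefore too strong and simply false.

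The paper's proof uses the isomorphism differently and gets the correct, weaker bound. It separates the case $P\subset\Nqklt(X,\omega)$ (where $b_P=0$ trivially), and otherwise argues by contradiction: if $b_P>1$, then $d_j+b_Pw_j\leq 1$ with $b_P>1$ forces $d_j<1-w_j$ for every $j$, hence $\lceil -d_j\rceil\geq w_j$, hence $f^*P\leq\lceil -(B_Y^{<1})\rceil$. This gives the strict inclusion $\mathcal O_X\subsetneq\mathcal O_X(P)\subset f_*\mathcal O_Y(\lceil -(B_Y^{<1})\rceil)$ near $P$, contradicting the qlc isomorphism. Notice that the contradiction is driven by the domination $f^*P\leq\lceil -(B_Y^{<1})\rceil$ (multiplicities at least $w_j$), not by mere membership in the support, and that it yields exactly $b_P\leq 1$, which is sharp, rather than $b_P\leq 0$. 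To repair your write-up, drop the claim that $d_{j_0}=1$, and instead run the argument from the assumption $b_P>1$ through the multiplicity estimate $\lceil -d_j\rceil\geq w_j$ to reach $f^*P\leq\lceil -(B_Y^{<1})\rceil$.
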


\begin{proof}
If $P$ is a qlc center of $[X, \omega]$, then $b_P=0$. 
Therefore, from now on, we assume that $P$ is not a qlc center of 
$[X, \omega]$. 
By shrinking $X$ around the generic point of $P$, 
we may assume that $X$ is quasi-projective 
and that $(Y, B_Y+b_Pf^*P)$ is sub slc. 
By taking a suitable birational modification of $Y$ (see 
\cite[Theorem 1.4]{bierstone-vera}), 
we may further assume that $(Y, \Supp B_Y+\Supp f^*P)$ is a 
simple normal crossing pair. 
In this situation, $\left(X, \omega+b_PP, f: (Y, B_Y+b_Pf^*P)\to X\right)$ 
has a natural quasi-log canonical structure. 
In order to prove $b_P\leq 1$, 
we may further assume that 
$X$ is a smooth curve and $P$ is a point of $X$ by taking 
general hyperplanes of $X$ and by using adjunction. 
If $b_P>1$, then $((B_Y+f^*P)^v)^{<1}=(B_Y+f^*P)^v$ holds 
over $P$. 
This implies that $f^*P\leq \lceil -(B^{<1}_Y)\rceil$. 
Thus we get 
$$
\mathcal O_X\subsetneq \mathcal O_X(P)\subset 
f_*\mathcal O_Y(\lceil -(B^{<1}_Y)\rceil)
$$ in a neighborhood of $P$. 
This is a contradiction because the natural map 
$$\mathcal O_X\to 
f_*\mathcal O_Y(\lceil -(B^{<1}_Y)\rceil)$$ is an isomorphism. 
Therefore, we obtain $b_P\leq 1$. 
\end{proof}

Let us start the proof of Theorem \ref{c-thm1.7}. 

\begin{proof}[Proof of Theorem \ref{c-thm1.7}]
By Theorem \ref{c-thm10.3}, 
we may assume that there exists a projective 
surjective morphism 
$f:(Y, B_Y)\to X$ from a simple normal crossing 
pair $(Y, B_Y)$ such that every stratum of 
$Y$ is dominant onto $X$ and that $\left(X, \omega, 
f: (Y, B_Y)\to X\right)$ is a quasi-log canonical pair. 
By taking some more blow-ups, 
we may further assume that $(B^h_Y)^{=1}$ is 
Cartier and that every stratum of $(Y, (B^h_Y)^{=1})$ 
is dominant onto $X$ (see, for example, 
\cite[Theorem 1.4 and Section 8]{bierstone-vera} 
and \cite[Lemma 2.11]{fujino-projectivity}). 
\setcounter{step}{0}
\begin{step}\label{a-thm1.4-step1}
In this step, we treat the case where 
$[X, \omega]$ has a $\mathbb Q$-structure. 
In this situation, 
$f:(Y, B_Y)\to X$ is a basic slc-trivial 
fibration (see Theorem \ref{c-thm10.3}). 
Let $\mathbf B$ be the discriminant $\mathbb Q$-b-divisor and let 
$\mathbf M$ be the moduli $\mathbb Q$-b-divisor associated to 
$f:(Y, B_Y)\to X$. Since $(Y, B_Y)$ is sub slc, $\mathbf B_X$ is 
a subboundary $\mathbb Q$-divisor on $X$, that is, 
$\mathbf B_X=(\mathbf B_X)^{\leq 1}$ . 
By Lemma \ref{c-lem11.2}, we obtain that 
$\mathbf B_X$ is an effective $\mathbb Q$-divisor on $X$. 
By the definition of qlc centers, 
we have $f((B^v_Y)^{=1})=\nqklt (X, \omega)$. 
We take a projective 
birational morphism $p:X'\to X$ from a smooth quasi-projective 
variety $X'$. 
Let $f': (Y', B_{Y'})\to X'$ be an induced basic slc-trivial 
fibration with the following commutative diagram. 
$$
\xymatrix{
(Y, B_Y)\ar[d]_-f & (Y', B_{Y'})\ar[d]^-{f'}\ar[l]_-q\\
X & \ar[l]^-p X' 
}
$$ 
By Theorem \ref{c-thm1.2}, 
we may assume that 
there exists a simple normal crossing divisor $\Sigma_{X'}$ on $X'$ 
such that $\mathbf M=\overline{\mathbf M_{X'}}$, $\Supp \mathbf M_{X'}$ and 
$\Supp \mathbf B_{X'}$ are contained in $\Sigma_{X'}$, and that every stratum of 
$(Y', \Supp B^h_{Y'})$ is smooth over $X'\setminus \Sigma_{X'}$. 
Of course, 
we may assume that 
$M_{X'}:=\mathbf M_{X'}$ is potentially nef by Theorem \ref{c-thm1.2}. 
We may further assume that every irreducible 
component of $q^{-1}_*\left((B^v_Y)^{=1}\right)$ is mapped onto a prime 
divisor in $\Sigma_{X'}$ 
with the aid of the flattening theorem 
(see \cite[Th\'eor\`eme (5.2.2)]{raynaud-g}). 
We put $B_{X'}:=\mathbf B_{X'}$. 
Note that $B_{X'}$ is a subboundary $\mathbb Q$-divisor on $X'$ 
since $(Y', B_{Y'})$ is sub slc. 
In the above setup, 
$f'(q^{-1}_*(B^v_Y)^{=1})\subset B^{=1}_{X'}$ by the definition of 
$\mathbf B$. 
Thus, we get $\Nqklt (X, \omega)\subset p(B^{=1}_{X'})$. 
On the other hand, we can easily see that 
$p(B^{=1}_{X'})\subset \Nqklt(X, \omega)$ by definition. 
Therefore, $p(B^{=1}_{X'})=\Nqklt (X, \omega)$ holds. 
Since $p_*B_{X'}=\mathbf B_X$ and $\mathbf B_X$ is effective, 
$B^{<0}_{X'}$ is $p$-exceptional. 
Hence, $B_{X'}$ and $M_{X'}$ satisfy the desired properties. 
We note that $B_{X'}$ and $M_{X'}$ are obviously $\mathbb Q$-divisors 
by construction. 
\end{step}
\begin{step}\label{a-thm1.4-step2} 
In this step, we treat the general case. 
We first use Lemma \ref{c-lem11.1} and get a positive real number 
$r_i$ and $\left(X, \omega_i, 
f:(Y, D_i)\to X\right)$ for $1\leq i\leq k$ with 
the properties in Lemma \ref{c-lem11.1}. 
Then we apply the argument in 
Step \ref{a-thm1.4-step1} to $\left(X, \omega_i, f:(Y, D_i)\to X\right)$ for 
every $i$.  
By Theorem \ref{c-thm1.2}, we can take a projective birational morphism 
$p:X'\to X$ from a smooth quasi-projective variety $X'$ 
which works for $\left(X, \omega_i, f:(Y, D_i)\to X\right)$ for every $i$. 
By summing them up with weight $r_i$, we get 
$\mathbb R$-divisors $B_{X'}$ and $M_{X'}$ with the desired properties. 
In this case, we do not 
claim that $B_{X'}$ is the discriminant of $f': (Y', B_{Y'})\to X'$. 
\end{step} 
Therefore, we get $p:X'\to X$, $B_{X'}$, and $M_{X'}$ with 
the desired properties. 
\end{proof}

As we mentioned in Remark \ref{c-rem1.9}, 
$(X, B_X+M_X)$, where $B_X:=p_*B_{X'}$ and 
$M_X:=p_*M_{X'}$, is generalized lc in the 
sense of \cite[Definition 4.1]{birkar-zhang}. 
Moreover, if $\Nqklt(X, \omega)=\emptyset$, then 
$(X, B_X+M_X)$ is generalized klt in the sense of 
\cite[Definition 4.1]{birkar-zhang}. 
We note that $M_{X'}$ is a finite $\mathbb R_{>0}$-linear 
combination of relatively nef Cartier divisors. 
Hence $(X, B_X+M_X)$ is an NQC 
g-pair in the sense of \cite[Definition 2.13]{han-li}. 

\medskip

Finally, we prove Corollary \ref{c-cor1.10}. 

\begin{proof}[Proof of Corollary \ref{c-cor1.10}]
By adjunction (see \cite[Theorem 6.3.5]{fujino-foundations}), 
$[W, \omega|_W]$ is a quasi-log canonical 
pair. 
Since $W$ is a minimal qlc stratum of $[X, \omega]$, 
$W$ is a normal irreducible variety 
and $\Nqklt(W, \omega|_W)=\emptyset$ holds (see \cite[Theorem 6.3.5 and 
Lemma 6.3.9]{fujino-foundations}). 
By Theorem \ref{c-thm1.7}, 
we can take a projective birational morphism $p:W'\to W$ 
from a smooth quasi-projective 
variety $W'$, a subboundary $\mathbb R$-divisor $B_{W'}$ whose 
support is a simple normal crossing divisor on $W'$, 
a potentially 
nef $\mathbb R$-divisor $M_{W'}$ on $W'$ such that $p^*(\omega|_W)
=K_{W'}+B_{W'}+M_{W'}$. 
Since $\Nqklt (W, \omega|_W)=\emptyset$ holds, 
we may assume that $B_{W'}=B^{<1}_{W'}$. 
By taking some more blow-ups, if necessary, 
we may further assume that 
there exists an effective $p$-exceptional 
Cartier divisor $E$ on $W'$ such that 
$\Supp B_{W'}\cup \Supp E$ is contained in  
a simple normal crossing divisor and that $-E$ is $p$-ample. 
We note that $-\varepsilon E+p^*H+M_{W'}$ is semi-ample for 
any $0<\varepsilon \ll 1$. 
Therefore, we can take a general effective $\mathbb R$-divisor 
$G\sim _{\mathbb R} -\varepsilon E+p^*H+M_{W'}$ such that 
$\Supp (B_{W'}+\varepsilon E+G)$ is a simple normal crossing divisor 
on $W'$ and $\lfloor B_{W'}+\varepsilon E+G\rfloor\leq 0$. 
By construction, $K_{W'}+B_{W'}+M_{W'}+p^*H\sim _{\mathbb R} 
K_{W'}+B_{W'}+\varepsilon E+G$ holds. 
We put $\Delta_W=p_*(B_{W'}+\varepsilon E+G)$. 
Then $\Delta_W$ satisfies the desired properties. 

When $[X, \omega]$ has a $\mathbb Q$-structure and 
$H$ is an ample $\mathbb Q$-divisor, it is easy to 
see that we can make $\Delta_W$ a $\mathbb Q$-divisor with $K_W+
\Delta_W\sim _{\mathbb Q} \omega|_W+H$ by the above construction 
of $\Delta_W$.   
\end{proof}

\section{On the basepoint-freeness}\label{c-sec12}

In this section, we give a small remark on the 
basepoint-free theorem for quasi-log canonical pairs. 

\medskip 

The following theorem is a special case of 
the basepoint-free theorem for quasi-log schemes (see \cite[Theorem 6.5.1]
{fujino-foundations}). 
We can quickly reduce Theorem \ref{c-thm12.1} 
to the usual Kawamata--Shokurov basepoint-free theorem 
for kawamata log terminal pairs by Corollary \ref{c-cor1.10}. 
Note that the general basepoint-free theorem 
for quasi-log schemes (see \cite[Theorem 6.5.1]{fujino-foundations}) 
easily follows from Theorem \ref{c-thm12.1}. 
For the details, see Claims 1, 3, and 4 in the proof of \cite[Theorem 6.5.1]
{fujino-foundations}. 

\begin{thm}[{Basepoint-free theorem, see 
\cite[Theorem 6.5.1]{fujino-foundations}}]\label{c-thm12.1}
Let $[X, \omega]$ be a quasi-log canonical pair 
with $\Nqklt(X, \omega)=\emptyset$ and let $\pi:X\to S$ be 
a projective morphism 
between schemes. 
Let $L$ be a $\pi$-nef Cartier divisor on $X$. 
Assume that 
$qL-\omega$ is $\pi$-ample for some real number 
$q>0$. 
Then there exists a positive number $m_0$ such that 
$\mathcal O_X(mL)$ is $\pi$-generated 
for every integer $m\geq m_0$. 
\end{thm}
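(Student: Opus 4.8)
The plan is to reduce the statement to the classical relative Kawamata--Shokurov basepoint-free theorem for kawamata log terminal pairs via the subadjunction formula in Corollary \ref{c-cor1.10}, so that no genuinely new ingredient is needed.

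First I would reduce to the case where $X$ is a normal irreducible quasi-projective variety. Since $\Nqklt(X, \omega)=\emptyset$, the pair $[X, \omega]$ has no qlc center, so the qlc strata of $[X, \omega]$ are exactly the irreducible components of $X$; by the basic theory of quasi-log canonical pairs (see \cite[Chapter 6]{fujino-foundations}) these components are mutually disjoint, and each of them, being a minimal qlc stratum, is a normal irreducible variety which carries an induced quasi-log canonical structure with empty $\Nqklt$. Hence it suffices to prove the assertion after replacing $X$ by one such component, so we may assume $X$ is normal and irreducible and is a minimal qlc stratum of $[X, \omega]$. Moreover, $\pi$-generation of $\mathcal O_X(mL)$ is local on $S$, and $S$ is quasi-compact, so it is enough to treat the case where $S$ is affine and then take the maximum of the resulting bounds over a finite affine cover; in particular we may assume $X$ is quasi-projective.

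Next I would fix an ample divisor $A$ on $X$. Since $qL-\omega$ is $\pi$-ample and $\pi$-ampleness is an open condition, there is a rational number $\varepsilon>0$ such that $qL-\omega-\varepsilon A$ is still $\pi$-ample. Applying Corollary \ref{c-cor1.10} to the minimal qlc stratum $W=X$ with the ample $\mathbb R$-divisor (indeed $\mathbb Q$-divisor) $H=\varepsilon A$, we obtain an effective $\mathbb R$-divisor $\Delta_X$ on $X$ such that $(X, \Delta_X)$ is kawamata log terminal and $K_X+\Delta_X\sim_{\mathbb R}\omega+\varepsilon A$. Then $L$ is a $\pi$-nef Cartier divisor on the klt pair $(X, \Delta_X)$ and $qL-(K_X+\Delta_X)\sim_{\mathbb R}qL-\omega-\varepsilon A$ is $\pi$-ample, so the relative Kawamata--Shokurov basepoint-free theorem for klt pairs (see, e.g., \cite{fujino-fund}) yields a positive integer $m_0$ with $\mathcal O_X(mL)$ $\pi$-generated for every integer $m\geq m_0$, which is the desired conclusion.

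The only points requiring care are the structural reduction in the first paragraph (that $\Nqklt(X,\omega)=\emptyset$ forces $X$ to be a disjoint union of normal irreducible minimal qlc strata) and the passage from the absolute to the relative formulation of the basepoint-free theorem together with the uniformity of $m_0$ over $S$; both are routine. The substantive input — that a minimal qlc stratum behaves like a klt pair after adding a small ample divisor — is precisely Corollary \ref{c-cor1.10}, which is why the reduction is ``quick''.
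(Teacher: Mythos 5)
Your proposal is correct and follows essentially the same route as the paper's own proof: reduce to the quasi-projective case, perturb $\omega$ by a small ample $\mathbb Q$-divisor so that $qL-(\omega+H)$ stays $\pi$-ample, invoke Corollary \ref{c-cor1.10} to produce a klt pair $(X,\Delta_X)$ with $K_X+\Delta_X\sim_{\mathbb R}\omega+H$, and conclude by the classical Kawamata--Shokurov basepoint-free theorem. The only difference is that you spell out the reduction to $X$ normal, irreducible, and a minimal qlc stratum, which the paper leaves implicit; this is a harmless and correct elaboration.
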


\begin{proof}
Without loss of generality, we may assume that $S$ is 
quasi-projective. Then $X$ is also quasi-projective. 
Therefore, we can take an ample 
$\mathbb Q$-divisor $H$ on $X$ such that 
$qL-(\omega+H)$ is still $\pi$-ample. 
By Corollary \ref{c-cor1.10}, 
we can take an effective 
$\mathbb R$-divisor $\Delta_X$ on $X$ such that 
$\omega+H\sim _{\mathbb R} K_X+\Delta_X$ and that 
$(X, \Delta_X)$ is kawamata log terminal. 
Therefore, by the usual Kawamata--Shokurov basepoint-free 
theorem for kawamata log terminal pairs, we obtain a positive 
number $m_0$ such that $\mathcal O_X(mL)$ is $\pi$-generated 
for every integer $m\geq m_0$. 
\end{proof}

\section{Supplements to \cite{fujino-fujisawa}}\label{c-sec13} 
In this section, we give some 
supplementary remarks on \cite{fujino-fujisawa} for the 
reader's convenience. We believe that there are no serious 
troubles in \cite{fujino-fujisawa}. 
However, we found that 
it contains some minor mistakes and ambiguities. 
So we fix them here. For a completely different approach to 
the results in \cite{fujino-fujisawa} based on 
Saito's theory of mixed Hodge modules, see \cite{ffs}. 

\begin{say}[Base change theorem]\label{c-say13.1}
We note that the statement of \cite[Lemma 3.4 (iv)]{fujino-fujisawa} 
is correct. However, the proof of \cite[Lemma 3.4 (iv)]{fujino-fujisawa} 
is somewhat misleading. Therefore, we recommend the interested 
reader to see 
\cite[Lemma 2.20]{fujisawa2} and its proof. 
We think that 
\cite[Lemma 3.4]{fujino-fujisawa} is an easy exercise. 
\end{say}

\begin{say}[Semipositivity theorem]\label{c-say13.2}
In \cite[Section 5]{fujino-fujisawa}, 
we discussed a generalization of the Fujita--Zucker--Kawamata 
semipositivity theorem (see \cite[Theorem 5.21]{fujino-fujisawa}), 
which plays a crucial role in this paper. 
We used \cite[Corollary 5.23]{fujino-fujisawa}, 
which is an easy consequence of \cite[Theorem 5.21]{fujino-fujisawa}, 
in Theorem \ref{c-thm3.1} (ii). 
Unfortunately, there are some ambiguities in the 
arguments in \cite[Section 5]{fujino-fujisawa}. 
In \cite[5.8]{fujino-fujisawa}, we defined the condition $(m\mathrm{MH})$. 
It was not precise enough because the real structure 
was not mentioned explicitly. 
In \cite[Section 2]{fujisawa}, Taro Fujisawa, who is one of the authors 
of \cite{fujino-fujisawa}, removed the ambiguities 
from \cite[Section 5]{fujino-fujisawa}. 
We recommend the reader to see \cite{fujisawa}. 
We also recommend the interested reader to see \cite[Theorem 3]{ffs} and 
\cite{fujino-fujisawa2}. 
In \cite{fujino-fujisawa2}, we give an analytic generalization of 
the Fujita--Zucker--Kawamata semipositivity 
theorem whose proof is completely 
different from the 
arguments in \cite[Section 5]{fujino-fujisawa}. 
\end{say}

\begin{say}[Lemma on two filtrations]\label{c-say13.3} 
In Section 4 of \cite{fujino-fujisawa},
the lemma on two filtrations
\cite[Propositions (7.2.5) and (7.2.8)]{deligne}
(see also \cite[Theorem 3.12]{peters-steenbrink})
was used
several times
(explicitly stated at
p.~608, the proof of Lemma 4.5,
p.~610, Remark 4.6,
p.~618, Step 1 of the proof of Lemma 4.10
and p.~623, the proof of Lemma 4.12,
and implicitly used
at p.~611, the proof of Lemma 4.8).
However, there are missing points
in the arguments.

Let $K$ be a complex,
$W$ a finite increasing filtration on $K$
and $F$ a finite decreasing filtration on $K$.
In order to apply the lemma on two filtrations
for the spectral sequence
\begin{equation*}
(E_r^{p,q}(K,W), F_{\rec}),
\end{equation*}
it is necessary to discuss about the $E_0$-terms.
More precisely,
it has to be checked that
the strictness of the filtration $F$
on the complex $\Gr_m^WK$ holds true for all $m$.
Here we will explain how to check this strictness
for the case of Lemma 4.10 of \cite{fujino-fujisawa}
mentioned above.
For the other cases, the similar arguments are valid.

In Step 1 of the proof of Lemma 4.10,
the bifiltered complex
\begin{equation*}
(Rf_{\ast}\Omega_{X_{\bullet}/\Delta}(\log E_{\bullet}), L, F)
\end{equation*}
is studied.
Thus the strictness of the filtration $F$
on the complex
\begin{equation*}
\Gr_m^LRf_{\ast}\Omega_{X_{\bullet}/\Delta}(\log E_{\bullet})
\end{equation*}
has to be checked for all $m$.
Under the canonical isomorphism
\begin{equation*}
\begin{split}
\Gr_m^LRf_{\ast}\Omega_{X_{\bullet}/\Delta}(\log E_{\bullet})
&\simeq
Rf_{\ast}\Gr_m^L\Omega_{X_{\bullet}/\Delta}(\log E_{\bullet}) \\
&\simeq
Rf_{-m \ast}\Omega_{X_{-m}/\Delta}(\log E_{-m})[m],
\end{split}
\end{equation*}
the filtration $F$ coincides
with the filtration induced from the stupid filtration, 
which is denoted by $F$ again, 
on the complex $\Omega_{X_{-m}/\Delta}(\log E_{-m})$. 
Therefore it suffices to prove the strictness of the filtration $F$
on $Rf_{-m \ast}\Omega_{X_{-m}/\Delta}(\log E_{-m})$ 
that is induced by the stupid filtration $F$ on 
$\Omega_{X_{-m}/\Delta}(\log E_{-m})$.
The strictness of $F$
on $Rf_{-m \ast}\Omega_{X_{-m}/\Delta}(\log E_{-m})$
is equivalent to the $E_1$-degeneracy
of the spectral sequence
$E_r^{p,q}(Rf_{-m \ast}\Omega_{X_{-m}/\Delta}(\log E_{-m}), F)$.
We note that the morphism of $E_r$-terms
\begin{equation*}
\begin{split}
d_r:
E_r^{p,q}(Rf_{-m \ast}\Omega&_{X_{-m}/\Delta}(\log E_{-m}), F) \\
&\longrightarrow
E_r^{p+r,q-r+1}(Rf_{-m \ast}\Omega_{X_{-m}/\Delta}(\log E_{-m}), F)
\end{split}
\end{equation*}
is zero on $\pd$ for all $p,q$ and for all $r \ge 1$
because $X_{-m} \longrightarrow \Delta$ is smooth and projective
over $\pd$.
On the other hand,
\begin{equation*}
\begin{split}
E_1^{p,q}(Rf_{-m \ast}\Omega_{X_{-m}/\Delta}(\log E_{-m}), F)
&=R^{p+q}f_{-m \ast}\Gr_F^p\Omega_{X_{-m}/\Delta}(\log E_{-m}) \\
&=R^qf_{-m \ast}\Omega^p_{X_{-m}/\Delta}(\log E_{-m}),
\end{split}
\end{equation*}
is a locally free $\mathcal O_{\Delta}$-module of finite rank
by \cite[(2.11) Theorem]{steenbrink}.
Therefore the morphism of $E_1$-terms $d_1$
is zero on the whole $\Delta$ for all $p,q$.
Inductively on $r$,
we obtain that
$E_r^{p,q}(Rf_{-m \ast}\Omega_{X_{-m}/\Delta}(\log E_{-m}), F)$
is a locally free $\mathcal O_{\Delta}$-module of finite rank
and that $d_r$ is zero on the whole $\Delta$
for all $p,q$ and for all $r \ge 1$.
Thus the $E_1$-degeneracy is proved.
\end{say}
%%%%%%%%%%%%%%%

\end{document}